\documentclass[12pt, reqno]{amsart}

\usepackage{
amsmath,
amssymb,
amsthm,
mathrsfs,
amsfonts,
ytableau,
enumitem,
comment,
upgreek,
soul,
yhmath,
mathtools,
shuffle,
kotex,
array,
caption,
hhline 
}

\usepackage[colorlinks=true, pdfstartview=FitV, linkcolor=blue,citecolor=blue, urlcolor=blue]{hyperref}

\usepackage{tikz,tikz-cd}

\usepackage[colorinlistoftodos, textwidth = 2.3cm]{todonotes}

\usepackage{bbm}
\usepackage{dsfont}
\usepackage{bm}

\ytableausetup{mathmode, boxsize=1.2em}

\setlength{\marginparwidth}{2.2cm}
\setlength{\textwidth}{16cm} \setlength{\textheight}{20cm}
\setlength{\oddsidemargin}{0.3cm} \setlength{\evensidemargin}{0.3cm}

\title[Categorifications of ${\textsf {QSym}}$ using supercharacter theories] 
{Categorifications of ${\textsf {QSym}}$ using supercharacter theories and a new basis for ${\textsf {NSym}}_{\mathbb{C}(q,t)}$}

\author[W.-S. Jung]{Woo-Seok Jung}
\address{Department of Mathematics, Sogang University, Seoul 04107, Republic of Korea}
\email{jungws@sogang.ac.kr}

\author[Y.-T. Oh]{Young-Tak Oh}
\address{Department of Mathematics, Sogang University, Seoul 04107, Republic of Korea}
\email{ytoh@sogang.ac.kr}

\thanks{All authors were supported by the National Research Foundation of Korea (NRF) Grant funded by the Korean Government (NRF-2020R1F1A1A01071055).}

\keywords{Quasisymmetric function, Noncommutative symmetric function, Hopf algebra, Supercharacter, Categorification }
\subjclass[2020]{05E05, 05E10, 16T05, 20C05}
\date{\today}

\newtheorem{theorem}{Theorem}[section]
\newtheorem{proposition}[theorem]{Proposition}
\newtheorem{lemma}[theorem]{Lemma}
\newtheorem{corollary}[theorem]{Corollary}

\newtheorem*{claim*}{Claim}

\theoremstyle{definition}

\newtheorem{example}[theorem]{Example}
\newtheorem{definition}[theorem]{Definition}
\newtheorem{remark}[theorem]{Remark}

\numberwithin{equation}{section} \numberwithin{figure}{section}
\numberwithin{table}{section}

\newcommand{\nc}{\newcommand}

\nc{\SG}{\mathfrak{S}}
\nc{\frakR}{\mathfrak{R}}
\nc{\frakL}{\mathfrak{L}}
\nc{\PCT}{\mathrm{PCT}}
\nc{\SPCT}{\mathrm{SPCT}}
\nc{\RT}{\mathrm{RT}}
\nc{\SRT}{\mathrm{SRT}}
\nc{\RCT}{\mathrm{RCT}}
\nc{\SRCT}{\mathrm{SRCT}}
\nc{\SYRT}{\mathrm{SYRT}}
\nc{\SYCT}{\mathrm{SYCT}}
\nc{\SPYCT}{\mathrm{SPYCT}}
\nc{\tst}{\mathtt{st}}
\nc{\Span}{\mathrm{span}}
\nc{\comp}{\mathrm{comp}}
\nc{\rmst}{\mathrm{st}}
\nc{\std}{\mathrm{std}}
\nc{\Des}{\mathrm{Des}}
\nc{\set}{\mathrm{set}}
\nc{\wt}{\mathrm{wt}}
\nc{\cf}{\textsf{cf}}
\nc{\scf}{\textsf{scf}}
\nc{\ch}{\mathrm{ch}}
\nc{\cl}{\mathrm{cl}}
\nc{\id}{\mathrm{id}}
\nc{\sh}{\mathrm{sh}}
\nc{\Cop}{\mathrm{Cop}}
\nc{\bfS}{\mathbf{S}}
\nc{\bfm}{\mathbf{m}}
\nc{\hbfS}{\widehat{\mathbf{S}}}
\nc{\bfF}{\mathbf{F}}
\nc{\calB}{\mathcal{B}}
\nc{\calS}{\mathcal{S}}
\nc{\hcalS}{\widehat{\mathcal{S}}}
\nc{\alphamax}{\alpha_{\rm max}}
\nc{\brho}{\overline{\rho}}
\nc{\bphi}{\overline{\phi}}
\nc{\calV}{\mathcal{V}}
\nc{\calR}{\mathcal{R}}
\nc{\sfR}{\mathsf{R}}
\nc{\calG}{\mathcal{G}}
\nc{\tal}{\lambda(\alpha)}
\nc{\tbe}{\widetilde{\beta}}
\nc{\opi}{\overline{\pi}}
\nc{\calP}{\mathcal{P}}
\nc{\rmtop}{\mathrm{top}}
\nc{\rad}{\mathrm{rad}}
\nc{\bfP}{\mathbf{P}}
\nc{\SET}{\mathrm{SET}}
\nc{\SIT}{\mathrm{SIT}}
\nc{\rev}{\mathrm{r}}
\nc{\Th}{\theta}
\nc{\mPhi}{\Phi}
\nc{\mphi}{\phi}
\nc{\mPsi}{\Psi}
\nc{\hmPsi}{\widehat{\Psi}}
\nc{\mpsi}{\psi}
\nc{\mGam}{\Gamma}
\nc{\tcd}{\mathtt{cd}}
\nc{\trd}{\mathtt{rd}}
\nc{\trcd}{\mathtt{rcd}}
\nc{\rmr}{\mathrm{r}}
\nc{\rmc}{\mathrm{c}}
\nc{\rmt}{\mathrm{t}}
\nc{\bubact}{\,\scalebox{0.6}{$\bullet$}\,}
\nc{\hbubact}{\,\scalebox{0.6}{$\widehat{\bullet}$}\,}
\nc{\col}{\rm col}
\nc{\row}{\rm row}
\nc{\calE}{\mathcal{E}}
\nc{\calT}{\mathscr{T}}
\nc{\sfT}{\mathsf{T}}
\nc{\calEsa}{\mathcal{E}^\sigma(\alpha)}
\nc{\tauC}{\tau_{\scalebox{0.5}{$C$}}}
\nc{\sytabC}{\sytab_{\scalebox{0.5}{$C$}}}
\nc{\bbfP}{\overline{\bfP}}
\nc{\pr}{\mathbf{pr}}
\nc{\Ups}{\Upsilon}
\nc{\pact}{\diamond}
\nc{\tauE}{\tau_{\scalebox{0.5}{$E$}}}
\nc{\tauF}{\tau_{\scalebox{0.5}{$F$}}}
\nc{\tauG}{\tau_{\scalebox{0.5}{$G$}}}
\nc{\rtE}{T_{\scalebox{0.5}{$E$}}}
\nc{\rtF}{T_{\scalebox{0.5}{$F$}}}
\nc{\rtG}{T_{\scalebox{0.5}{$G$}}}
\nc{\oPaE}{\overline{\Phi}_{\alpha_E}}
\nc{\oPaF}{\overline{\Phi}_{\alpha_F}}
\nc{\oPaG}{\overline{\Phi}_{\alpha_G}}
\nc{\tab}{\tau}
\nc{\sytab}{\widehat{\tau}}
\nc{\hatE}{\widehat{E}}
\nc{\hati}{\hat{i}}
\nc{\hcalE}{\widehat{\calE}}
\nc{\hatC}{\widehat{C}}
\nc{\bal}{{\boldsymbol{\upalpha}}}
\nc{\bbe}{{\boldsymbol{\upbeta}}}

\nc{\bgam}{{\boldsymbol{\upgamma}}}
\nc{\bdel}{{\boldsymbol{\updelta}}}
\nc{\weakcon}{\odot}
\nc{\basisI}{I}

\nc{\ldalpha}{\lambda(\alpha)}
\nc{\SRIT}{\mathrm{SRIT}}
\nc{\re}{\mathrm{rev}}
\nc{\otau}{\overline{\tau}}
\nc{\rtop}{{\rm top}}
\nc{\sfc}{\mathsf{c}}
\nc{\sfr}{\mathsf{r}}
\nc{\tH}{\mathtt{H}}
\nc{\tV}{\mathtt{V}}
\nc{\rpi}{\mathring{\pi}}
\nc{\cpi}{\check{\pi}}
\nc{\frakm}{\mathfrak{m}}
\nc{\fke}{\mathfrak{e}}
\nc{\Hom}{\mathrm{Hom}}
\nc{\module}{\mathrm{mod} \, }

\nc{\SPCTsa}{\SPCT^\sigma(\alpha)}
\nc{\bfSsa}{\bfS_\alpha^\sigma}
\nc{\bfSsaC}{{\bfS}^\sigma_{\alpha,C}}
\nc{\hbfSsa}{\widehat{\bfS}_\alpha^\sigma}
\nc{\upineq}{\rotatebox{90}{$<$}}
\nc{\downineq}{\rotatebox{270}{$<$}}
\nc{\diagineq}{\rotatebox{135}{$<$}}
\nc{\frakB}{\mathfrak{B}}
\nc{\hxi}{\widehat{\xi}}
\nc{\hxidwJ}{\hxi_{\scalebox{0.55}{$J$}}}
\nc{\hxiupJ}{\hxi^{\scalebox{0.55}{$J$}}}
\nc{\scrS}{\mathscr{S}}
\nc{\bfT}{\mathbf{T}}

\nc{\ra}{\rightarrow}

\nc{\matr}[2]{\left( \hspace{-1ex} \begin{array}{c} #1 \\ #2 \end{array} \hspace{-1ex} \right)}

\newcommand{\Sym}{{\textsf {Sym}}}
\newcommand{\QSym}{{\textsf {QSym}}}
\newcommand{\NSym}{{\textsf {NSym}}}
\newcommand{\FQSym}{{\textsf {FQSym}}}
\newcommand{\NCSym}{\textsf{NCSym}}
\newcommand{\Comp}{{\textsf {Comp}}}
\newcommand{\Par}{{\textsf {Par}}}
\newcommand{\Irr}{{\rm Irr}}

\definecolor{wsgreen}{rgb}{0,0.5,0}

\newcommand{\I}{\mathfrak{S}}

\nc{\DIRT}{\mathrm{DIRT}}
\nc{\hpi}{\pi}
\nc{\frakI}{\mathfrak{I}}
\nc{\hfrakI}{\widehat{\mathfrak{I}}}
\nc{\orho}{\overline{\rho}}
\nc{\autotheta}{\uptheta}
\nc{\autophi}{\upphi}
\nc{\autochi}{\upchi}
\nc{\autoomega}{\upomega}
\nc{\hIM}{\widehat{\frakB}}
\nc{\bfpi}{\boldsymbol{\uppi}}
\nc{\bfopi}{\overline{\boldsymbol{\uppi}}}
\nc{\ofrakB}{\overline{\frakB}}
\nc{\rmw}{\mathrm{w}}
\nc{\ostar}{\;\overline{*}\;}
\nc{\rank}{\mathrm{rank}}
\nc{\fkp}{\mathfrak{p}}
\nc{\bfR}{\mathbf{R}}

\nc{\upsig}{{\boldsymbol{\upsigma}}}
\nc{\bfSsaE}{{\bfS}^\upsig_{\alpha,E}}

\nc{\hfkp}{\widehat{\mathfrak{p}}}

\nc{\hautophi}{{\widehat{\autophi}}}
\nc{\hautotheta}{{\widehat{\autotheta}}}
\nc{\hautoomega}{{\widehat{\autoomega}}}
\nc{\rmperm}{\mathrm{perm}}

\nc{\bfsigJ}{\boldsymbol{\sigma}_{\scalebox{0.55}{$J$}}}
\nc{\bfrhoJ}{\boldsymbol{\rho}^{\scalebox{0.55}{$J$}}}

\nc{\pistar}[1]{\pi_{#1}^*}
\nc{\wfkp}{\widetilde{\mathfrak{p}}}
\nc{\bfpsi}{\boldsymbol{\uppsi}}

\nc{\yt}[1]{\todo[size=\tiny,color=blue!10]{#1 \\ \hfill --- Young-Tak}}
\nc{\YT}[1]{\todo[size=\tiny,inline,color=blue!10]{#1
		\\ \hfill --- Young-Tak}}

\nc{\ws}[1]{\todo[size=\tiny,color=green!10]{#1 \\ \hfill ---  Woo-Seok}}
\nc{\WS}[1]{\todo[size=\tiny,inline,color=green!10]{#1
		\\ \hfill --- Woo-Seok}}

\definecolor{purple}{rgb}{0.44, 0.0, 1.0}

\newenvironment{red}{\relax\color{red}}{\hspace*{.5ex}\relax}
\newenvironment{blue}{\relax\color{blue}}{\hspace*{.5ex}\relax}
\newenvironment{green}{\relax\color{wsgreen}}{\hspace*{.5ex}\relax}
\newenvironment{magenta}{\relax\color{magenta}}{\hspace*{.5ex}\relax}
\newenvironment{purple}{\relax\color{purple}}{\hspace*{.5ex}\relax}

\nc{\ber}{\begin{red}}
\nc{\er}{\end{red}}
\nc{\beb}{\begin{blue}}
\nc{\eb}{\end{blue}}
\nc{\bema}{\begin{magenta}}
\nc{\ema}{\end{magenta}}
\nc{\begr}{\begin{green}}
\nc{\egr}{\end{green}}

\nc{\bepu}{\begin{purple}}
\nc{\epu}{\end{purple}}
\nc{\lb}{\pmb{\left[\vphantom{\frac{1}{2}}\right.}}
\nc{\rb}{\pmb{\left.\vphantom{\frac{1}{2}}\right]}}
\nc{\slb}{\pmb{[\vphantom{\frac{1}{2}}}}
\nc{\srb}{\pmb{\vphantom{\frac{1}{2}}]}}

\begin{document}

\maketitle


\begin{abstract}
Let us fix a positive integer $\nu>1$. For each positive integer $n>1$, we consider a normal supercharacter theory $\mathcal{S}_n$ of $G_n$, where $G_n$ is the direct-product of $n-1$ copies of the cyclic group of order $\nu$. Then we endow $\bigoplus_{n \ge 0} \textsf{scf}(\mathcal{S}_n)$, the direct-product of supercharacter function spaces, with the Hopf algebra structure that is isomorphic to the Hopf algebra $\textsf{QSym}$ of quasisymmetric functions. Furthermore, we compute the structure constants of the Hopf algebra thus obtained for the basis consisting of superclass identifier functions. Using our categorifications, we study a new basis for the Hopf algebra $\textsf{NSym}_{\mathbb{C}(q,t)}$ of noncommutative symmetric functions over the rational function field $\mathbb{C}(q,t)$ in commuting variables $q$ and $t$, with an emphasis on the structure constants of $\textsf{NSym}_{\mathbb{C}(q,t)}$ for this basis. Some interesting applications are also obtained via the specializations of $q$ and $t$.
\end{abstract}

\tableofcontents

\section{Introduction}

Categorifying an algebraic object allows us to look at the hidden side of the object. 
Through this process, we can derive information that is difficult to see from the object itself.
Perhaps the most foundational example is the categorification of the Hopf algebra $\Sym$ of symmetric functions via the tower of the symmetric group algebras $\mathbb C \SG_n$, where  $n$ ranges over the set of nonnegative integers. 
More precisely, equipped with induction product and restriction, the Grothendieck group associated with $\bigoplus_{n\ge 0} \mathbb C \SG_n$ turns out to be isomorphic to $\Sym$ as Hopf algebras (\cite{G77, Z81}).
In fact, this categorification has a tremendous influence on the theory of symmetric functions (see~\cite{M98} and~\cite{FH13}).

In this paper, we focus on the Hopf algebra $\QSym$ of quasisymmetric functions.
In~\cite{DKLT96}, Duchamp, Krob, Leclerc, and Thibon categorified $\QSym$ via the tower of the $0$-Hecke algebras $H_n(0)$.
Similar to the above, equipped with induction product and restriction, the Grothendieck group associated with $\bigoplus_{n\ge 0} H_n(0)$ turns out to be isomorphic to $\QSym$ as Hopf algebras.
However, since the $0$-Hecke algebras are not group algebras, 
one can naturally ask whether $\QSym$ can be categorified via the tower of certain group algebras as in $\Sym$.
Here we give a positive answer to this question.
In more detail, we successfully categorify $\QSym$ via supercharacter theories of finite abelian groups. 
The results thus obtained will be used in studying a new basis for the Hopf algebra $\NSym$ of noncommutative symmetric functions.

Let us briefly review supercharacter theories of finite groups.
Classifying the conjugacy classes and the irreducible characters of the unipotent upper triangular matrix group $UT_n(q)$ over a finite field $\mathbb{F}_q$, where $n$ ranges over the set of nonnegative integers, has been regarded as a difficult problem.
Supercharacter theory was initiated by André(\cite{A95}) and Yan(\cite{Y01}) as a tractable method for studying the representation theory of $UT_n(q)$. 
Inspired by André’s and Yan’s work, Diaconis and Isaacs~\cite{DI08} extended the concept of supercharacter theories to arbitrary finite groups in an axiomatic method.

Given a finite group $G$, a \emph{supercharacter theory} $\mathcal{S}$ of $G$ is defined as a pair of a set partition of $\{\text{conjugacy classes of $G$}\}$ and a set partition of $\{\text{irreducible characters of $G$}\}$ that satisfies appropriate axioms (Definition~\ref{def: supercharacter theory}).
The following table illustrates key analogies between the character theory and a supercharacter theory well 
(for more information, see Subsection~\ref{subsection: Supercharacter theories of a finite group}).
{\small
\begin{table}[h]
\begin{tabular}{|c|c|}
\hline
 \textbf{The character theory of $G$ }   &  \textbf{ A supercharacter theory of $G$}  \\ \hline
conjugacy classes                 & superclasses                              \\ \hline
irreducible characters            & supercharacters                          \\ \hline

class identifier functions        & superclass identifier functions          \\ \hline

class functions             & supercharacter functions                                       \\ \hline
\end{tabular}
\end{table}}

Supercharacter theories have been used extensively in the problems of categorifying combinatorial Hopf algebras.
In 2013, using André's supercharacter theory of $UT_n(q)$, Aguiar et al.~\cite{28people12} succeeded in categorifying the Hopf algebra $\NCSym$ of symmetric functions in noncommuting variables.
Very recently, using normal supercharacter theories, Aliniaeifard and Thiem succeeded in categorifying the Hopf algebra $\FQSym$ of free quasisymmetric functions in~\cite{AT21} and the Hopf algebra $\NSym$ of noncommutative symmetric functions in~\cite{AT21-Nsym}.
Since $\FQSym$, $\QSym$, $\NSym$, and $\Sym$ are the four most classical and important combinatorial Hope algebras, it would be very natural to deal with $\QSym$ as a follow-up study of the above works.

The first objective of the present paper is to categorify 
$\QSym$ using normal supercharacter theories of finite abelian groups.
Our setup is as follows.
Let $n$ and $\nu$ be any positive integers $>1$, $C_\nu$ the additive cyclic group of order $\nu$, and $G$ the direct product of $n-1$ copies of $C_\nu$.  
Then we let
\[\mathcal{N}_n(\nu):= \{ Q_I(\nu) \mid I \subseteq [n-1] \},\]
where $Q_I(\nu)$ is the subgroup of $G$ whose $j$th component is $C_\nu$ if $j\in I$ and $\{0\}$ otherwise.
According to Aliniaeifard's result~\cite[Theorem 3.4]{A17},
it gives rise to a normal supercharacter theory $\mathcal{S}(\mathcal{N}_n(\nu))$. 
Denote by 
$\{ \chi^I(\nu) \mid I \subseteq S \}$ and $
\{ \kappa_I(\nu) \mid I \subseteq S \}$
the set of supercharacters and the set of superclass identifier functions of $\mathcal{S}(\mathcal{N}_n(\nu))$,
respectively. For full information, see Subsection~\ref{subsection: Normal supercharacter theories of finite abelian groups}.
It can be easily checked that the supercharacter function space of $\mathcal{S}(\mathcal{N}_n(\nu))$ has the same dimension as $\QSym_n$, the $n$th homogeneous component of $\QSym$.

We elaborately define a product ${\bf m}$ and a coproduct $\blacktriangle$ on the $\mathbb C$-vector space 
\[\bigoplus_{n \ge 0} \scf(\mathcal{S}(\mathcal{N}_n(\nu)))\] 
as compositions of certain linear operators. 
Here $\scf(\mathcal{S}(\mathcal{N}_n(\nu)))$ is the supercharacter function
of $\mathcal{N}_n(\nu)$ and $\scf(\mathcal{S}(\mathcal{N}_n(\nu)))\cong \mathbb C$ for $n=0,1$.
Equipped with these operations, it is proven that 
$\bigoplus_{n \ge 0} \scf(\mathcal{N}_n(\nu))$
has a Hopf algebra structure and the characteristic map 
\begin{align*}
    \ch_\nu : \bigoplus_{n \ge 0} \scf(\mathcal{N}_n(\nu)) \to \QSym, \quad  \dot\chi^I(\nu)\mapsto L_{\comp(I)} \quad (I \subset [n-1])
\end{align*}
is an isomorphism of Hopf algebras,
where $\dot\chi^I(\nu)=\chi^{I}(\nu)/\chi^{I}(\nu)(\bm 0)$, $\comp(I)$ is the composition of $n$ corresponding to $I$, 
and $L_{\comp(I)}$ is the fundamental quasisymmetric function attached to $\comp(I)$
(Theorem~\ref{thm: categorification of QSym}).

The second objective of the present paper is to study  
the basis $\{ \mathcal{B}(q,t)_{\alpha} \mid \alpha \in \Comp \}$ for the Hopf algebra $\NSym_{\mathbb{C}(q,t)}$ of noncommutative symmetric function over $\mathbb{C}(q,t)$ (=the rational function field in commuting variables $q$ and $t$), 
where 
\[
\mathcal{B}(q,t)_{\comp(I)} := \sum_{J: I \cup J = [n-1]} q^{|I \setminus J|} \  t^{|I \cap J|} \  H_{\comp(J)} \in \NSym_{\mathbb{C}(q,t)}
\]
for $I \subseteq [n-1]$ and $H_{\comp(J)}$ is the complete homogeneous noncommutative symmetric function attached to $\comp(J)$.
This basis has some noteworthy properties. 
For example, if $q$ and $t$ are specialized suitably, it interpolates some  well known bases for $\NSym$ as follows: 
\[
\mathcal{B}(1,0)_{\alpha} = H_{\alpha^c},  \qquad \mathcal{B}(-1,1)_{\alpha} = \Lambda_{\alpha^c}, \qquad \mathcal{B}(1,-1)_{\alpha} = E^*_{\alpha^c},
\]
where $H_{\alpha}$, $\Lambda_{\alpha}$, and $E_{\alpha}^*$ are 
the complete homogeneous, the elementary, the dual essential noncommutative symmetric function in $\NSym$, respectively, 
and $\alpha^c$ denotes the complement of $\alpha$
(see~\ref{Subsection: qt basis B for NSym}).
The main result here is the calculation of the structure constants of $\NSym_{\mathbb{C}(q,t)}$ for this basis. 
In other words, we provide explicit expansions of   $\mathcal{B}(q,t)_{\alpha}\,\mathcal{B}(q,t)_{\beta}$ and $\triangle\mathcal{B}(q,t)_{\alpha}$ in this basis.
Let us briefly sketch the calculation process.
We first derive formulas on the structure constants of $(\bigoplus_{n \ge 0} \scf(\mathcal{S}(\mathcal{N}_n(\nu))),\bm{m},\blacktriangle)$ 
for the basis consisting of superclass identifier functions $\kappa_I(\nu)$.
Second, for each $I \subseteq [n-1]$, we let
\[
\Pi(\nu)_{\comp(I)} := \ch_\nu\left(\frac{\kappa_I(\nu)}{(\nu-1)^{|I|}}\right)
\]
and consider the basis 
$\{\Pi(\nu)_{\alpha} \mid \alpha \in \Comp \}$
for $\QSym$.  
Third, via the duality of $\QSym$ and $\NSym$, 
we obtain formulas on the structure constants of $\NSym$ for the dual basis $\{\Pi(\nu)^*_{\alpha} \mid \alpha \in \Comp \}$ of $\{\Pi(\nu)_{\alpha} \mid \alpha \in \Comp \}$ (Lemma~\ref{lem: str consts for Pi}).
And we show that 
\[
\Pi(\nu)^*_{\alpha} = \mathcal{B}(-\nu,\nu-1)_{\alpha}
\]
for every $\alpha \in \Comp$ and $\nu >1$.
Finally, putting these together, we derive formulas on the structure constants of  $\NSym_{\mathbb{C}(q,t)}$ for the basis $\{\mathcal{B}(-q,q-1)_{\alpha} \mid \alpha \in \Comp \}$, which immediately leads us to the desired result
(Theorem~\ref{thm: str consts for B}).
Contrary to the product, the structure constants associated with the coproduct appear as rather complicated  polynomials in $q$ and $t$ with integer coefficients.
However, they can be written in a much simpler form if they are restricted to $\{\mathcal{B}(q,t)_{1^n} \mid n \ge 0 \}$, a generating set of $\NSym_{\mathbb{C}(q,t)}$ (Theorem~\ref{thm: str consts for hat B}).

As applications, we obtain generating sets for $\NSym$ and $\Sym$ from the description of the structure constants associated with the product.
For instance,
\[
\left\{ \sum_{\lambda :  \text{ partitions of $n$ }} a^{n-\ell(\lambda)}  b^{\ell(\lambda)-1}  C_{\lambda}  \, h_{\lambda} \mid n \ge 0 \right\}
\]
is a generating set of $\Sym$ for each $a \in \mathbb{C} \setminus \{0\}$ and $b \in \mathbb{C}$, where $C_{\lambda} = {\ell(\lambda)!}/{\prod_{i} m_i(\lambda)!}$ for $\lambda = 1^{m_1(\lambda)} 2^{m_2(\lambda)} \cdots $ 
and $h_{\lambda}$ is the complete homogeneous symmetric function attached to $\lambda$ (Corollary~\ref{minimal geberating sets for nsym and sym}).
Also, using the description of the structure constants associated with the coproduct, we redescribe the number of overlapping shuffles of two compositions $\alpha$ and $\beta$ with weight $\gamma$ (Corollary~\ref{cor: overlapping shuffle}).

As seen above, our categorifications
provide many interesting applications related to quasisymmetric functions and noncommutative symmetric functions.
We expect our approach to be useful in studying the structure constants of various combinatorial Hopf algebras.

The paper is organized as follows.
In Section~\ref{Section: Preliminaries}, we briefly review normal supercharacter theories and the Hopf algebras $\QSym$, $\NSym$, and $\FQSym$.
In Section~\ref{Section: categorification of QSym}, for each integer $\nu>1$, we consider the normal supercharacter theories 
$\mathcal{N}_n(\nu)$ for $n\ge 0$ and then endow the $\mathbb C$-space 
$\bigoplus_{n \ge 0} \scf(\mathcal{S}(\mathcal{N}_n(\nu)))$ with
the Hopf algebra structure that is isomorphic to $\QSym$.
Furthermore, we compute the structure constants of the Hopf algebra thus obtained 
for the basis consisting of superclass identifier functions.
Section~\ref{Section: new bases for NSym} is devoted to the study of a 
new basis $\{ \mathcal{B}(q,t)_{\alpha} \}$ for $\NSym_{\mathbb{C}(q,t)}$,
with an emphasis on the structure constants of $\NSym_{\mathbb{C}(q,t)}$ for this basis. Some interesting applications are also obtained via the specializations of $q$ and $t$.

\section{Preliminaries}\label{Section: Preliminaries}
Given any integers $m$ and $n$, define $[m,n]$ to be the interval $\{t\in \mathbb Z \mid m\le t \le n\}$ whenever $m \le n$ and the empty set $\emptyset$ else.
For simplicity, we set $[n]:=[1,n]$ and therefore
$[n]=\emptyset$ if $n<1$.
Unless otherwise stated, $n$ will denote a nonnegative integer throughout this paper. 

\subsection{Supercharacter theories of a finite group}\label{subsection: Supercharacter theories of a finite group}
In this subsection, $G$ denotes a finite group.
A supercharacter theory of $G$ is a variation of the character theory of $G$. 
Given a set partition $\mathcal{K}$ of $G$, let $f(G;\mathcal{K})$ 
be the $\mathbb C$-vector space consisting of functions which are constant on the blocks in $\mathcal{K}$, that is, 
\[
f(G;\mathcal{K}):= \{\psi \mid G \rightarrow \mathbb{C} \mid \psi(g) = \psi(k) \text{ if $g$ and $h$ are in the same block in $\mathcal K$}\}.
\]
And, we let $\Irr(G)$ be the set of irreducible characters of $G$.

\begin{definition}{\rm (\cite{DI08})}\label{def: supercharacter theory}
A \emph{supercharacter theory} $\mathcal{S}$ of $G$ is a pair $(\cl(\mathcal{S}) , \ch(\mathcal{S}))$, where $\cl(\mathcal{S})$ is a set partition of $G$ and $\ch(\mathcal{S})$ 
is a set partition of $\Irr(G)$, such that
\begin{enumerate}
    \item[{\bf C1.}] $\{e\} \in \cl(\mathcal{S}) $,
    \item[{\bf C2.}] $|\cl(\mathcal{S}) | = |\ch(\mathcal{S})|,$
    \item[{\bf C3.}] $\text{For each block $X$ in $\ch(\mathcal{S})$}$,
    \[
    \chi^X:=\sum_{\psi \in X} \psi(e) \psi \in f(G; \cl(\mathcal{S})),
    \]
    where $e$ denotes the identity of $G$.
\end{enumerate}
\end{definition}

Each block in $\cl(\mathcal{S})$ is called a \emph{superclass} of $\cl(\mathcal{S})$ and, for each block $X$ of $\ch(\mathcal{S})$,
$\chi^X$ is called a \emph{supercharacter} of $\mathcal{S}$.
And, $f(G;\cl(\mathcal{S}))$ is called the \emph{supercharacter function space} of $\mathcal{S}$, denoted by $\scf(\mathcal{S})$.

Perhaps the most familiar examples of supercharacter theories are 
\begin{align*}
 & (\{\text{conjugacy classes of $G$}\}, \{ \{\psi \} \mid \psi \in \Irr(G) \}) \text{ and }\\
 &(\{ \{e\}, G \setminus \{ e \} \}, \{ \{\mathbbm{1} \}, \Irr(G) \setminus \{\mathbbm{1} \}  \}),
\end{align*}
where $\mathbbm{1}$ is the trivial character of $G$.

For each $I \in \cl(\mathcal{S})$, consider the function $\kappa_I \in f(G;\cl(\mathcal{S}))$
defined by 
\[
\kappa_I(g) =
\begin{cases}
      1 &\text{if } g \in I, \\
      0 &\text{otherwise,}
\end{cases}
\]
which is called the \emph{superclass identifier function} attached to $I$.
Combining the orthogonality of irreducible characters with the conditions {\bf C2} and {\bf C3}, 
one can easily see that 
$\{\kappa_I \mid I \in \cl(\mathcal{S}) \}$ and $\{\chi^X \mid X \in \ch(\mathcal{S}) \}$
are $\mathbb C$-bases of $\scf(\mathcal{S})$.

Supercharacter theories can be generated in many ways.
Here we introduce the normal supercharacter theory introduced by Aliniaeifard~\cite{A17}.
Let $\ker(G):=\{N \trianglelefteq G \}$ be the lattice of normal subgroups of $G$ ordered by inclusion. 
For $M,N\in \ker(G)$, the meet and join of $M$ and $N$ are given by
\[
M \vee N = MN, \quad M \wedge N = M \cap N.
\]
Define a \emph{sublattice} $\mathcal{N}$ of $\ker(G)$ by a subset of $\ker(G)$ such that
\begin{enumerate}
    \item $\{e\}, G \in \mathcal{N}$ and 
    \item $\mathcal{N}$ is closed under meet and join operations.
\end{enumerate}
Obviously every sublattice also forms a lattice under $\vee, \wedge$.
Given $N \in \mathcal{N}$, let
\[
\mathrm{C}(L) := \{ O \in \mathcal{N} \mid O \text{ covers } L\}
\]
and 
\begin{align*}
N_{\circ} &:= \{g \in N \mid g \notin M \text{ for all } M\in \mathcal{N} \text{ with } N \in {\rm C}(M)\} \text{ and }\\
X^{N^{\bullet}} &:= \{\psi \in \Irr(G) \mid N \subseteq \ker(\psi) , \text{ but } O \nsubseteq \ker(\psi) \text{ for all }O\in C(N)\}.
\end{align*}
With this notation, the following theorem is proved in \cite{A17}. 
\begin{theorem}{\rm (\cite[Theorem 3.4]{A17})}\label{def of normal character theory} 
Given a sublattice $\mathcal{N}$ of $\ker(G)$, 
let
\begin{align*}
  &\cl(\mathcal{S} (\mathcal{N})) := \{N_{\circ} \mid N \in \mathcal{N} \text{ and } N_{\circ} \neq \emptyset \}\\
  &\ch(\mathcal{S} (\mathcal{N})) := \{ X^{N^{\bullet}} \mid N \in \mathcal{N} \text{ and } X^{N^{\bullet}} \neq \emptyset\}.
    \end{align*}
Then $(\cl(\mathcal{S} (\mathcal{N})),\ch(\mathcal{S}(\mathcal{N})))$ 
defines a supercharacter theory $\mathcal{S}(\mathcal{N})$ of $G$.
\end{theorem}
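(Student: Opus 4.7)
The plan is to verify axioms \textbf{C1}--\textbf{C3} of Definition~\ref{def: supercharacter theory}, after first confirming that $\cl(\mathcal{S}(\mathcal{N}))$ and $\ch(\mathcal{S}(\mathcal{N}))$ are set partitions of $G$ and $\Irr(G)$, respectively. The lattice structure of $\mathcal{N}$ will carry most of the combinatorial weight, while a M\"obius inversion on $\mathcal{N}$ will take care of the character-theoretic content.

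For the set-partition claim on $\cl(\mathcal{S}(\mathcal{N}))$, I would exploit that $\mathcal{N}$ is closed under meet and contains $G$: for each $g \in G$ the set $\{N \in \mathcal{N} \mid g \in N\}$ has a unique minimum
\[
N_g := \bigwedge \{N \in \mathcal{N} \mid g \in N\},
\]
and one checks $g \in (N_g)_\circ$, since any $M \in \mathcal{N}$ with $N_g \in \mathrm{C}(M)$ and $g \in M$ would violate minimality of $N_g$. Conversely, if $g \in N_\circ$, then $N_g \subseteq N$, and any strict containment would let one insert a cover $M$ with $N_g \subseteq M \subsetneq N$ and $g \in M$, contradicting $g \in N_\circ$. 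Hence the nonempty $N_\circ$ partition $G$, and \textbf{C1} drops out from $\{e\}_\circ = \{e\}$, since $\{e\}$ is minimal in $\mathcal{N}$. A dual argument, using closure under join and the presence of $\{e\} \in \mathcal{N}$, assigns each $\psi \in \Irr(G)$ to the unique block $X^{N_\psi^\bullet}$ where $N_\psi := \bigvee\{N \in \mathcal{N} \mid N \subseteq \ker(\psi)\}$, so $\ch(\mathcal{S}(\mathcal{N}))$ partitions $\Irr(G)$.

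The heart of the proof is \textbf{C3}, from which \textbf{C2} will also follow. I would begin from the column-orthogonality identity, for $N \trianglelefteq G$,
\[
\eta_N(g) \;:=\; \sum_{\psi \in \Irr(G),\, N \subseteq \ker(\psi)} \psi(e)\psi(g) \;=\; \frac{|G|}{|N|}\,\mathbbm{1}_N(g),
\]
obtained by inflating the standard identity on $\Irr(G/N)$ along $G \twoheadrightarrow G/N$. For $N \in \mathcal{N}$, the condition $N \subseteq \ker(\psi)$ is equivalent to $N \subseteq N_\psi$, so the sum decomposes as $\eta_N = \sum_{M \in \mathcal{N},\, M \supseteq N} \chi^{X^{M^\bullet}}$. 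M\"obius inversion on the sublattice $\mathcal{N}$ then writes each $\chi^{X^{N^\bullet}}$ as an integer linear combination of the $\eta_M$; and since for $g \in L_\circ$ one has $N_g = L$, membership $g \in M$ is equivalent to $L \subseteq M$, so each $\mathbbm{1}_M$, and hence each $\eta_M$, is constant on every superclass. This proves \textbf{C3}.

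For \textbf{C2}, read the same M\"obius inversion in the other direction: every $\mathbbm{1}_M$ (and hence every superclass indicator $\mathbbm{1}_{N_\circ}$, by inclusion-exclusion on $\mathcal{N}$) lies in the span of the $\chi^{X^{N^\bullet}}$. Combined with the linear independence of the nonzero supercharacters, inherited from the orthogonality of $\Irr(G)$, this forces them to be a basis of $f(G; \cl(\mathcal{S}(\mathcal{N})))$, and a dimension count yields $|\cl(\mathcal{S}(\mathcal{N}))| = |\ch(\mathcal{S}(\mathcal{N}))|$. The step I expect to be the most delicate is the bookkeeping around which $N \in \mathcal{N}$ contribute nonempty blocks on each side; this should nevertheless fall out of the span/independence argument above, since the matching dimensions force the nonvanishing patterns on the two sides to synchronize automatically.
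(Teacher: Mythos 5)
The paper does not actually prove this statement---it is imported verbatim from Aliniaeifard \cite[Theorem 3.4]{A17}---so there is no internal proof to compare against; your write-up is a correct self-contained reconstruction along the same lines as the original argument. Concretely: identifying $N_\circ$ and $X^{N^\bullet}$ with the fibers of $g \mapsto N_g = \bigwedge\{M \in \mathcal{N} \mid g \in M\}$ and $\psi \mapsto N_\psi = \bigvee\{M \in \mathcal{N} \mid M \subseteq \ker\psi\}$ does give the two set partitions and \textbf{C1}; the identity $\sum_{\psi : N \subseteq \ker\psi}\psi(e)\psi = \tfrac{|G|}{|N|}\mathbbm{1}_N$ combined with M\"obius inversion on the finite lattice $\mathcal{N}$ yields \textbf{C3}; and the span-plus-linear-independence dimension count (superclass indicators lie in the span of the nonzero supercharacters via $\mathbbm{1}_{N_\circ}=\sum_{M\subseteq N}\mu(M,N)\mathbbm{1}_M$, while the nonzero supercharacters are independent because they are positive sums of disjoint sets of irreducibles) correctly forces \textbf{C2}.
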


For each sublattice  $\mathcal{N} \subseteq \ker(G)$, the supercharacter theory $\mathcal{S}(\mathcal{N})$
is called a \emph{normal supercharacter theory} of $G$.
If there is no danger of confusion, we simply write $\mathcal{N}$ for $\mathcal{S}(\mathcal{N})$.
Using normal supercharacter theories, Aliniaeifard and Thiem~\cite{AT21, AT21-Nsym} successfully categorify the Hopf algebra $\FQSym$ and the Hopf algebra $\NSym$.
In Section~\ref{Section: categorification of QSym}, we present a categorification of the Hopf algebra $\QSym$ using normal supercharacter theories of certain finite abelian groups.

\subsection{The Hopf algebras in our consideration}
\label{Hopf algebras in consideration}
We start by recalling the definitions and properties of Hopf algebras.
All of these are borrowed unchanged from \cite{GR20}.

A Hopf algebra is a bialgebra $H$ over $\mathbb{C}$ together with a $\mathbb{C}$-linear map $S: H \rightarrow H$, called the antipode, which satisfy certain compatible relations.
If $A = \bigoplus_{n \ge 0} A_n$ is a $\mathbb Z_{\ge 0}$-graded $\mathbb{C}$-algebra with $A_0 \cong \mathbb{C}$, then $A$ is said to be \emph{connected}. 
It is well known that every connected graded bialgebra $H$ has a unique antipode $S$ endowing it with 
a Hopf structure (for instance, see~\cite[Proposition 1.4.14]{GR20}).

In the present paper, we deal with Hopf algebras $\QSym$, $\NSym$, and $\FQSym$,
which were firstly introduced in~\cite{gessel84},~\cite{Gelfand95}, and~\cite{MR95}, respectively.
Since all of them are connected $\mathbb Z_{\ge 0}$-graded bialgebras, 
we usually do not mention their antipodes unless otherwise stated. 
Before introducing these algebras, let us collect the necessary basic definitions and notation.

A composition is a finite tuple $\alpha = (\alpha_1, \alpha_2,\ldots,\alpha_l)$ of positive integers. Its length is defined to be $l$ and denoted by $\ell(\alpha)$ and its size is defined to be $\alpha_1 + \alpha_2 +\cdots + \alpha_l$ and denoted by $|\alpha|$.
Denote the set of composition of $n$ by $\Comp_n$.
A partition $\lambda = (\lambda_1, \lambda_2, \ldots, \lambda_l)$ is a composition with additional condition that $\lambda_1 \ge \lambda_2 \ge \cdots \ge \lambda_l$. Denote the set of partition of $n$ by $\Par_n$.
A composition $\alpha$ of $n$ determines a partition by rearranging in order of $|\alpha_i|$. Denote it by $\lambda(\alpha)$.

For each positive integer $n$, 
there is a 1-1 correspondence between $\Comp_n$ and subsets of $[n-1]$ given by 
\begin{align*}
     \alpha=(\alpha_1,\alpha_2,\ldots,\alpha_l) &\mapsto  {\rm set}(\alpha):=\{ \alpha_1, \alpha_1 + \alpha_2, \ldots, \alpha_1+\alpha_2+\ldots+\alpha_{l-1} \}\\
     S=\{s_1<s_2< \cdots< s_i\} &\mapsto {\rm comp}(S):= (s_1,s_2-s_1, \ldots,s_i-s_{i-1}, n-s_i). 
\end{align*}

\subsubsection{The Hopf algebra of quasisymmetric functions}
Let $x=(x_1, x_2, \ldots )$ be the infinite totally ordered set of commuting variables, and let $\mathbb{C}[[x_1,x_2,\ldots]]$ be the algebra of formal power series of bounded degree. 
For each composition 
$\alpha=(\alpha_1,\alpha_2,\ldots,\alpha_l)$, define the {\it monomial quasisymmetric function} to be
$$
M_{\alpha}= \sum_{j_1<j_2<\cdots<j_l} x_{j_1}^{\alpha_1}x_{j_2}^{\alpha_2} \cdots x_{j_{l}}^{\alpha_l}.
$$
The algebra $\QSym$ of \emph{quasisymmetric functions} over $\mathbb{C}$ is defined by 
$$
\QSym := \bigoplus_{n \ge 0} \QSym_n \subseteq \mathbb{C}[[x_1,x_2,\ldots]],
$$
where $\QSym_n := {\rm span}_{\mathbb{C}} \{ M_{\alpha} \mid \alpha \in \Comp_n \}$.

For $\alpha, \beta \in \Comp_n$, we say that $\alpha$ refines $\beta$ if one can obtain $\beta$ from $\alpha$ by combining some of its adjacent parts. Alternatively, this means that $\set(\beta) \subseteq \set(\alpha)$. Denote this by $\alpha \preceq \beta$.
With this ordering, we have two other bases $\{F_{\alpha}\}$ and $\{E_{\alpha}\}$, where 
\begin{align*} 
&F_{\alpha}=\sum_{\beta \succeq \alpha} M_{\beta} \quad (\text{the \emph {fundamental quasisymmetric function}})\\
&E_{\alpha}=\sum_{\alpha \succeq \beta} M_{\beta} \quad (\text{the \emph {essential quasisymmetric function}}).
\end{align*}
The latter basis was investigated intensively by Hoffman in~\cite{H15}.

The algebra $\QSym$ has a natural coproduct structure. 
In particular, in the basis of monomial quasisymmetric functions, the coproduct formula can be expressed 
in the following form:
$$
\triangle M_{\alpha}= \sum_{\beta \cdot \gamma = \alpha} M_{\beta} \otimes M_{\alpha},
$$
where $\beta \cdot \gamma$ is the concatenation of compositions $\beta$ and $\gamma$.
For reference, in this case, the antipode is given as follows:
\[
S(M_{\alpha}) = (-1)^{\ell(\alpha)} \sum_{\gamma \succeq \alpha^r} M_{\gamma},
\]
where $\alpha^r= (\alpha_l,\alpha_{l-1},\ldots,\alpha_1)$ is the reverse composition of $\alpha = (\alpha_1,\alpha_2,\ldots,\alpha_l)$.

\subsubsection{The Hopf algebra of noncommutative symmetric functions}
The Hopf algebra $ \NSym $ of \emph{noncommutative symmetric functions} over $\mathbb{C}$ is defined by the graded dual of $\QSym$, i.e.,
$\NSym = \QSym^\circ. $
Let 
\[(\cdot,\cdot) : \NSym \otimes \QSym \rightarrow \mathbb{C}\]
be the dual pairing and $H_{\alpha}$ be the dual basis of $M_{\alpha}$ so that
$(H_{\alpha}, M_{\beta}) = \delta_{\alpha, \beta}$.
Letting $H_n := H_{(n)}$ for $n=1,2,\ldots$, one can see that $\NSym \simeq \mathbb{C} \langle H_1,H_2,\ldots \rangle$ is a free associative algebra on generators $\{H_1, H_2, \ldots \}$ with the coproduct determined by
$$
\triangle H_n= \sum_{i+j=n} H_i \otimes H_j
$$
(see~\cite[Thm5.4.2]{GR20}).

The noncommutative symmetric function $H_{\alpha}$ $(\alpha \in \Comp)$ are called the \emph{noncommutative complete homogeneous symmetric functions}.
There are many well known bases for $\NSym$ other than $\{H_{\alpha}\}$. 
We are particularly interested in the bases  $\{\Lambda_{\alpha}\}$, $\{R_{\alpha}\}$, $\{E^*_{\alpha}\}$, where 
\begin{align*}
 \Lambda_{\alpha} &:= \sum_{\beta \preceq \alpha} (-1)^{n-\ell(\beta)} H_{\beta} 
 \quad (\text{the \emph{noncommutative elementary symmetric function}})\\
  R_{\alpha} &:= \sum_{\alpha \preceq \beta} (-1)^{\ell(\beta)-\ell(\alpha)} H_{\beta} 
  \quad (\text{the \emph{noncommutative ribbon Schur function}})\\
  E^*_{\alpha} &:= \sum_{\beta \preceq \alpha} (-1)^{\ell(\beta)-\ell(\alpha)} H_{\beta}
   \quad (\text{the \emph{dual essential noncommutative symmetric function}})
\end{align*}
In fact, $\{R_{\alpha}\}$ is the dual basis of $\{F_{\alpha}\}$ and $\{E^*_{\alpha}\}$ is the dual basis of $\{E_{\alpha}\}$ with respect to the dual pairing $(\cdot , \cdot )$.

\begin{theorem} {\rm (\cite[Corollary 5.4.3]{GR20})} \label{thm: comm}
Let $\Sym$ be the Hopf algebra of symmetric functions over $\mathbb{C}$ and $h_n$ be the $n$th complete homogeneous symmetric function.
The algebra homomorphism
\begin{equation}\label{def of commutative image}
   {\rm comm}: \NSym \to \Sym , \quad H_n \mapsto h_n 
\end{equation}
is a surjective Hopf algebra homomorphism. And, ${\rm comm}(\Lambda_n) = e_n$ where $e_n$ is the $n$th elementary symmetric function.
\end{theorem}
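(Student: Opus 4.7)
My plan is to verify the four assertions in order: well-definedness of ${\rm comm}$, surjectivity, compatibility with the Hopf structure, and the computation of ${\rm comm}(\Lambda_n)$. Well-definedness comes for free: because $\NSym \cong \mathbb{C}\langle H_1, H_2, \ldots\rangle$ is the free associative algebra on the $H_n$, the assignment $H_n \mapsto h_n$ extends uniquely to an algebra homomorphism $\NSym \to \Sym$. Surjectivity is then immediate from the classical fact that $\{h_n \mid n \ge 1\}$ generates $\Sym$ as a $\mathbb{C}$-algebra, and these all lie in the image.

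To promote ${\rm comm}$ to a Hopf algebra map, I would check the bialgebra axioms on the generators $H_n$, where the coproduct identity $\triangle H_n = \sum_{i+j=n} H_i \otimes H_j$ recorded in the excerpt matches its counterpart $\triangle h_n = \sum_{i+j=n} h_i \otimes h_j$ in $\Sym$. Both $\triangle \circ {\rm comm}$ and $({\rm comm}\otimes {\rm comm})\circ \triangle$ are algebra maps $\NSym \to \Sym\otimes \Sym$, so an equality on the generating set $\{H_n\}$ forces equality everywhere, and the counit compatibility is analogous. Since $\NSym$ and $\Sym$ are both connected graded bialgebras, each admits a unique antipode by the general fact recorded earlier in the excerpt, and any bialgebra map between connected graded bialgebras automatically intertwines their antipodes.

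It remains to compute ${\rm comm}(\Lambda_n)$. Because $(n)$ is the coarsest element of $\Comp_n$ under the refinement order, every composition of $n$ refines $(n)$, so the definition of $\Lambda_\alpha$ with $\alpha = (n)$ collapses to
\begin{equation*}
\Lambda_n = \sum_{\beta \in \Comp_n} (-1)^{n-\ell(\beta)} H_\beta, \qquad \text{hence} \qquad {\rm comm}(\Lambda_n) = \sum_{\beta \in \Comp_n} (-1)^{n-\ell(\beta)} h_\beta.
\end{equation*}
The only non-bookkeeping step, and in that sense the main obstacle, is recognizing the right-hand side as $e_n$. The clean way is to apply the generating-function identity $E(t)H(-t) = 1$ and expand $E(t) = 1/H(-t)$ as a geometric series in $1 - H(-t) = \sum_{k\ge 1}(-1)^{k-1} h_k t^k$; extracting the coefficient of $t^n$ yields precisely the claimed composition sum. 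Together with the preceding paragraphs, this completes the proof.
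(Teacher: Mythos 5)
Your proof is correct, and since the paper offers no argument of its own for this statement---it simply cites \cite[Corollary 5.4.3]{GR20}---your write-up supplies exactly the standard proof used there: the universal property of the free algebra $\mathbb{C}\langle H_1,H_2,\ldots\rangle$ for well-definedness and for checking the coproduct only on the generators $H_n$, together with the generating-function identity $E(t)H(-t)=1$ to identify $\sum_{\beta\in\Comp_n}(-1)^{n-\ell(\beta)}h_\beta$ with $e_n$. No gaps; the appeal to automatic compatibility of antipodes for bialgebra morphisms of (connected graded) Hopf algebras is also standard and valid.
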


\subsubsection{The Hopf algebra of free quasisymmetric functions}

we begin by introducing the necessary notation.
For positive integers $m$ and $n$, let 
\begin{equation}\label{notation for the set of shuffles}
\textsf{Sh}_{m,n}:=\left\{\sigma \in \SG_{m+n} \mid \substack{\sigma^{-1}(1) < \sigma^{-1}(2) < \cdots < \sigma^{-1}(m)\quad \text{ and }\\ \sigma^{-1}(m+1) < \sigma^{-1}(m+2) < \cdots < \sigma^{-1}(m+n)} \right\}.
\end{equation}
For words $u = u_1 u_2  \ldots u_m$, $v = v_1 v_2  \ldots v_n$, and a permutation $\sigma \in \textsf{Sh}_{m,n}$, 
set
\begin{align*}
u \shuffle_{\sigma} v := w_1 w_2 \ldots w_{m+n},
\end{align*}
where $w_{\sigma^{-1}(i)}=u_i$ for $1\le i \le m$ and $w_{\sigma^{-1}(m+j)}= v_{j}$ 
for $1\le j \le n$.
Then we define  
\[u \shuffle v := \{ u \shuffle_{\sigma} v \mid \sigma \in \textsf{Sh}_{m,n} \} \quad (\text{as a multiset}).\]
Using one line notation, let us identify each permutation $\pi \in \SG_n$ with the word $\pi_1 \pi_2 \ldots \pi_n$, where $\pi_i = \pi(i)$ for $i \in [n]$.
The \emph{$m$-shifted permutation} of $\pi$ is defined by the permutation $\pi[m] \in \SG_{[m+1, m+n]}$ corresponding to the word $(\pi_1+m) (\pi_2+m) \ldots (\pi_n+m)$.
For instance, $21[2]=43$ and therefore    
\[
12\,\shuffle \,{21[2]}= \{12 {43}, 1{4}2{3}, 1{4}{3}2, {4}12{3},{4}1 { 3}2, {43}{12}\}.
\]
\begin{definition}{\rm (\cite[Theorem 3.3]{MR95})}
The Hopf algebra $\FQSym$ of free quasisymmetric functions is defined to be the graded Hopf algebra over $\mathbb{C}$
$$
\FQSym := \bigoplus_{n \ge 0} \FQSym_n,
$$
where $\FQSym_n := {\rm span}_{\mathbb{C}} \{ F_w \mid w\in \SG_n\}$, 
with the following product and coproduct:
\begin{itemize}
\item
For each $u \in \SG_m$ and $v \in \SG_n$, the product is defined by
$$
F_u F_v := \sum_{w \in u \shuffle v[m]} F_w,
$$
\item
and, for each $w \in \SG_n$, the coproduct is defined by
$$
\triangle F_w := \sum_{k=0}^{n} F_{\std(w_1w_2 \ldots w_k)} \otimes F_{\std(w_{k+1}w_{k+2}\ldots w_n)},
$$
where $\std(w)$ stands for the standardization of $w$. 

\end{itemize}
\end{definition}

\begin{example}
 $\triangle(F_{132})= 1\otimes F_{132} + F_{1} \otimes F_{21} + F_{12} \otimes F_{1} + F_{132} \otimes 1$.
\end{example}

\begin{theorem} {\rm (\cite[Cororally 8.1.14]{GR20})}\label{thm: FQSym to QSym}
The $\mathbb{C}$-linear map
\begin{align*}
  \pi: \FQSym \to \QSym, \quad F_w \mapsto L_{\comp(\Des(w))} \quad (w \in \SG_n, \, n\ge 0)
\end{align*}
is a surjective Hopf algebra homomorphism, where $\Des(w)$ is the descent set of $w$, i.e.,
$$
\Des(w) = \{ i \mid w(i) > w(i+1)\} \subseteq [n-1].
$$
\end{theorem}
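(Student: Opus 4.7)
The plan is to verify four items in sequence: well-definedness and linearity of $\pi$, surjectivity, compatibility with the product, and compatibility with the coproduct. Once $\pi$ is established as a bialgebra map between two connected graded bialgebras, the uniqueness of the antipode noted in Subsection~\ref{Hopf algebras in consideration} upgrades it to a Hopf algebra homomorphism automatically, so no separate antipode computation is required.

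Well-definedness and linearity are automatic because $\{F_w \mid w \in \SG_n\}$ is a $\mathbb{C}$-basis of $\FQSym_n$, so prescribing $\pi$ on basis elements defines it uniquely. For surjectivity, recall that $\{L_\alpha \mid \alpha \in \Comp\}$ is a basis of $\QSym$; given any $\alpha \in \Comp_n$, any permutation $w \in \SG_n$ with $\Des(w) = \set(\alpha)$ yields $\pi(F_w) = L_\alpha$, so the image of $\pi$ contains every basis element.

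To verify that $\pi$ respects products, I would apply $\pi$ to the shuffle product $F_u F_v = \sum_{w \in u \shuffle v[m]} F_w$ for $u \in \SG_m$ and $v \in \SG_n$, obtaining $\pi(F_u F_v) = \sum_{w \in u \shuffle v[m]} L_{\comp(\Des(w))}$. The equality $\pi(F_u)\,\pi(F_v) = \sum_{w \in u \shuffle v[m]} L_{\comp(\Des(w))}$ is then precisely the classical shuffle-product formula for fundamental quasisymmetric functions, a consequence of Stanley's theory of $P$-partitions, so the two sides agree.

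The main combinatorial step is checking that $\pi$ respects coproducts, and this is where the one nontrivial calculation sits. Applying $\pi \otimes \pi$ to the deconcatenation formula for $\triangle F_w$ produces
\[
\sum_{k=0}^{n} L_{\comp(\Des(\std(w_1 \cdots w_k)))} \otimes L_{\comp(\Des(\std(w_{k+1} \cdots w_n)))},
\]
while $\triangle L_\alpha$ admits the well-known expression as $\sum L_\beta \otimes L_\gamma$ ranging over all concatenation splittings $\alpha = \beta \cdot \gamma$ together with all near-concatenation splits obtained by cutting a single part of $\alpha$ into two positive pieces. Comparing the two expressions reduces to the elementary fact that standardization preserves relative order, so that
\[
\Des(\std(w_1 \cdots w_k)) = \Des(w) \cap [k-1], \qquad \Des(\std(w_{k+1} \cdots w_n)) = \bigl(\Des(w) \cap [k+1,\,n-1]\bigr) - k.
\]
The index $k \in \set(\alpha) = \Des(w)$ then produces a concatenation summand, while the index $k \notin \set(\alpha)$ with $0 < k < n$ produces a near-concatenation summand, exhausting all terms of $\triangle L_\alpha$. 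The hardest part of the argument is this bookkeeping of descents under restriction and standardization; once it is in place, all remaining assertions are immediate from the definitions.
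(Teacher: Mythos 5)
Your argument is correct, but note that the paper does not prove this statement at all: it is imported verbatim from \cite[Corollary 8.1.14]{GR20}, so there is no internal proof to compare against. What you wrote is the standard textbook route — surjectivity via choosing any $w$ with $\Des(w)=\set(\alpha)$, product compatibility via the $P$-partition shuffle formula for the fundamental basis (using that $\Des(v[m])=\Des(v)$), coproduct compatibility via $\Des(\std(w_1\cdots w_k))=\Des(w)\cap[k-1]$ and $\Des(\std(w_{k+1}\cdots w_n))=(\Des(w)\cap[k+1,n-1])-k$, and the automatic compatibility with antipodes for morphisms of connected graded bialgebras — and it is sound; in fact your descent bookkeeping for the coproduct is exactly the computation the paper itself reuses later in the proof of Lemma~\ref{lem: categorification of QSym}.
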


The diagram below shows the relationship between the Hope algebras discussed so far.
The morphisms $\pi^*$ and ${\rm comm}^*$ represent the dual maps of $\pi$ and ${\rm comm}$, respectively, 
and the unlabelled arrow in the middle represents the duality of the Hopf algebras.

\[
\begin{tikzcd}[row sep=scriptsize, column sep=scriptsize]
& \FQSym \arrow[dl, twoheadrightarrow, "\pi"] &  \\
\QSym \arrow[rr, leftrightarrow] & & \NSym  \arrow[ul, hook', "\pi^*"] \arrow[dl, twoheadrightarrow, "{\rm comm}"] \\
& \Sym  \arrow[ul, hook', "{\rm comm}^*"] & & \\
\end{tikzcd}
\]

\section{Categorifications of $\QSym$ using supercharacter theories}
\label{Section: categorification of QSym}

We assume that $\nu$ is a positive integer $>1$, which will be fixed throughout this section.

\subsection{Normal supercharacter theories of $\bigoplus C_\nu$}
\label{subsection: Normal supercharacter theories of finite abelian groups}
In~\cite{AT21}, Aliniaeifard and Thiem categorified $\FQSym$ via towers of groups and their supercharacter theories, and our approach here is basically based on this paper.

For a cyclic group $C_\nu$ of order $\nu$ and a finite set $S$, let 
$$
Q_S(\nu) := \bigoplus_{s \in S} C_{\nu,s}, 
$$
where $C_{\nu,s} = C_\nu$ for all $s \in S$. 
For clarity, we use $0$ to represent the additive identity of $C_\nu$ and $\bm 0$ to represent the identity of $Q_S(\nu)$.
For each subset $I$ of $S$, we identify $Q_I(\nu)$ with the subgroup of $Q_S(\nu)$ whose 
$j$th component is $C_\nu$ if $j\in I$ and $\{0\}$ else.
Let
\[\mathcal{N}_S(\nu):= \{ Q_I(\nu) \mid I \subseteq S \}.\]
Under this identification, it can be easily seen that  
$\mathcal{N}_S(\nu)$ is a sublattice of $Q_S(\nu)$ and thus 
gives rise to a normal supercharacter theory $\mathcal{S}(\mathcal{N}_S(\nu))$ of $Q_S(\nu)$ 
(see  Theorem~\ref{def of normal character theory}).
For simplicity, we write 
$Q_n(\nu)$ and $\mathcal{N}_n(\nu)$ for $Q_{[n-1]}(\nu)$ and $\mathcal{N}_{[n-1]}(\nu)$, respectively.

The sublattice $\mathcal{N}_S(\nu)$ form a distributive lattice which implies that $N_{\circ} \neq \emptyset$ and $X^{N^{\bullet}} \neq \emptyset$ by~\cite[Corollary 3.11]{AT20}. Therefore, all blocks of $\cl(\mathcal{S}(\mathcal{N}_n(\nu)))$
and $\ch(\mathcal{S}(\mathcal{N}_n(\nu)))$ are parametrized by subsets of $[n-1]$. 
This implies that the superclass function space,  denoted by $\scf(\mathcal{S}(\mathcal{N}_n(\nu)))$, is of dimension $|\{ I \subseteq [n-1]  \}| = 2^{n-1}$, so it has the same dimension as $\QSym_n$. 

For each $I \subseteq S$, set
\begin{equation}\label{def of two notable bases}
\begin{aligned}
 &\cl_I(\nu):= {Q_I(\nu)}_{\circ}    \quad     \,\text{ and } \quad  \ch_I(\nu):= X^{{Q_I(\nu)}^{\bullet}}\\
&\kappa_I(\nu):=\kappa_{\cl_I(\nu)} \qquad \text{ and } \quad  \chi^I(\nu):=\chi^{\ch_I(\nu)}. 
\end{aligned}
\end{equation}

\begin{remark}\label{ambiguity of notation}
It should be noted that the notation in~\eqref{def of two notable bases} depends $S$ as well as $I$.
If necessary, we will clarify $S$ as in Definition~\eqref{def: product of scf}.
\end{remark}

From the definition of $Q_I(\nu)_{\circ}$ it follows that 
\[
\cl_I(\nu) = \{(g_i)_{i \in S} \in Q_n(\nu) \mid g_i \text{ is nonzero if $i \in I$ and zero else}\}.
\] 
In the following, we provide a formula for $\chi^I(\nu)$.
Let $\mathbbm{1}$ be the trivial character of $C_\nu$ and $\mathbbm{reg}$ be the character of the regular representation of $C_\nu$.

\begin{proposition}\label{thm: supercharacter calculation}
Let $I \subseteq S$.
For $\bm{g} = (g_i)_{i \in S} \in Q_S(\nu)$, 
we have 
$$
\chi^I(\nu) (\bm{g}) = \prod_{i \in I} \mathbbm{1}(g_i) \prod_{j \in I^c} (\mathbbm{reg} - \mathbbm{1}) (g_j).
$$
\end{proposition}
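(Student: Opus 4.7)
The plan is to compute $\chi^I(\nu)$ directly from the definition in Theorem~\ref{def of normal character theory}, exploiting the fact that $Q_S(\nu)$ is a finite abelian group, so that every irreducible character is a product of characters of the individual factors $C_\nu$.

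First, I would identify $\Irr(Q_S(\nu))$ with the set of tuples $\psi = \bigotimes_{s \in S} \psi_s$ with $\psi_s \in \widehat{C_\nu}$, via $\psi((g_s)_{s \in S}) = \prod_{s \in S} \psi_s(g_s)$. In particular $\psi(\bm{0}) = 1$ for every such $\psi$.

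Next, I would describe the block $\ch_I(\nu) = X^{Q_I(\nu)^\bullet}$ explicitly. The covers of $Q_I(\nu)$ in the lattice $\mathcal{N}_S(\nu)$ are precisely the subgroups $Q_{I \cup \{j\}}(\nu)$ with $j \in S \setminus I$. Hence the defining condition for $\psi \in X^{Q_I(\nu)^\bullet}$ reads: $\psi$ is trivial on $Q_I(\nu)$, but nontrivial on $Q_{I \cup \{j\}}(\nu)$ for every $j \in S \setminus I$. Under the identification above, this translates to
\[
\psi_i = \mathbbm{1} \quad \text{for every } i \in I, \qquad \psi_j \neq \mathbbm{1} \quad \text{for every } j \in S \setminus I.
\]

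Finally, applying condition {\bf C3} of Definition~\ref{def: supercharacter theory} and $\psi(\bm{0})=1$, I would compute
\[
\chi^I(\nu)(\bm{g}) \;=\; \sum_{\psi \in \ch_I(\nu)} \psi(\bm{g}) \;=\; \prod_{i \in I} \mathbbm{1}(g_i) \cdot \prod_{j \in I^c} \Biggl(\sum_{\substack{\psi_j \in \widehat{C_\nu} \\ \psi_j \neq \mathbbm{1}}} \psi_j(g_j)\Biggr),
\]
the factorization being valid because the conditions on the $\psi_s$ are independent across $s \in S$. Using that $\mathbbm{reg} = \sum_{\psi_j \in \widehat{C_\nu}} \psi_j$ for the cyclic group $C_\nu$, the inner sum equals $(\mathbbm{reg} - \mathbbm{1})(g_j)$, yielding the stated formula.

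No real obstacle is anticipated; the only non-mechanical step is correctly reading the cover relation in the sublattice $\mathcal{N}_S(\nu)$ and translating $O \nsubseteq \ker(\psi)$ into nontriviality of the appropriate component $\psi_j$, but this is immediate from the product structure of the group and its character group.
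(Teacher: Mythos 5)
Your proof is correct, but it takes a genuinely different route from the paper. The paper does not compute the block $\ch_I(\nu)$ at all: it simply quotes the general superclass-value formula for normal lattice supercharacter theories from \cite[Corollary 3.4, 3.5]{AT20}, evaluates $\chi^I(\nu)$ on a superclass $\cl_J(\nu)$, simplifies the resulting expression to $(\nu-1)^{|I^c|}\bigl(\tfrac{-1}{\nu-1}\bigr)^{|J\setminus I|}$, and matches it with the values of $\mathbbm{1}$ and $\mathbbm{reg}-\mathbbm{1}$. You instead work directly from Theorem~\ref{def of normal character theory} and condition {\bf C3}: identify $\Irr(Q_S(\nu))$ with tuples of characters of $C_\nu$, observe that the covers of $Q_I(\nu)$ in $\mathcal{N}_S(\nu)$ are exactly the $Q_{I\cup\{j\}}(\nu)$ with $j\in S\setminus I$, translate the kernel conditions into $\psi_i=\mathbbm{1}$ for $i\in I$ and $\psi_j\neq\mathbbm{1}$ for $j\in I^c$, and then factor the sum $\sum_{\psi\in\ch_I(\nu)}\psi(\bm g)$ coordinatewise using $\mathbbm{reg}=\sum_{\psi\in\widehat{C_\nu}}\psi$. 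All of these steps are sound (the degrees $\psi(\bm 0)=1$ since the group is abelian, and the conditions on the components are indeed independent across $s\in S$). What your argument buys is self-containedness and transparency: it avoids the external citation entirely, and in particular it sidesteps the conditional ``$0$ otherwise'' clause with the relation $<_c$ that the paper transcribes from \cite{AT20}, which as written is somewhat opaque (your computation makes it clear that $\chi^I(\nu)$ is in fact nonzero on every superclass). What the paper's route buys is brevity and applicability beyond this specific lattice, since the cited formula holds for arbitrary normal lattice supercharacter theories rather than exploiting the product structure of $Q_S(\nu)$.
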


\begin{proof}
By~\cite[Corollary 3.4, 3.5]{AT20}, for $\bm{g} \in \cl_J(\nu)$, we have
\begin{align*}
    \chi^I(\nu) (\bm{g}) &= 
\begin{cases}
    \dfrac{\nu^{n-1}}{\nu^{|I|}} \left(\dfrac{\nu-1}{\nu}\right)^{|\text{covers of } I|}  \left(\dfrac{1}{1-\nu}\right)^{|J \setminus I|}  & \text{ if } I <_{c} J,\\
    0              & \text{otherwise}
\end{cases}\\
    &=
\begin{cases}
    (\nu-1)^{|I^c|} \
    \left(\dfrac{-1}{\nu-1}\right)^{|J \setminus I|} & \text{ if } I <_{c} J,\\
    0              & \text{ otherwise},
\end{cases}
\end{align*}
where $<_c$ represents the covering relation of the poset $\{I: I \subseteq S \}$ ordered by inclusion.
Now, the assertion follows from the fact that $\mathbbm{1}(g) = 1$ for all $g \in G$ and
\[
(\mathbbm{reg} - \mathbbm{1}) (g) =
\begin{cases}
    \nu-1 & \text{ if } g = 0, \\
    -1  & \text{ otherwise.}
\end{cases}
\]
\end{proof}

It is convenient to express $\chi^I(\nu)$ in the form of coordinates as in~\cite{AT21}.
For a finite group $G$, let $\cf(G)$ by denotes the $\mathbb C$-vector space of the {\it class functions} on $G$.
Consider the natural isomorphism of vector spaces
\begin{equation*}
    \slb \,\, \cdot \,\, \srb: \bigotimes_{s \in S} \cf(C_{\nu,s})\to \cf(Q_S(\nu))
\end{equation*}
defined by $\lb \bigotimes_{i \in S} \phi_i \rb (\bm{g}) = \prod_{i\in S}\phi_i(g_i)$ for $\phi_i \in \cf(C_{\nu,i})$ and $\bm{g} = (g_i)_{i \in S} \in Q_S(\nu)$.

\smallskip
{\bf Convention.} 
When $ S=\{s_1<s_2< \cdots< s_i\}$, we 
write $\bigotimes_{i \in S} \phi_i$ and $\lb \bigotimes_{i \in S} \phi_i \rb$ as $(\phi_{s_1},\phi_{s_2},\ldots,\phi_{s_i})$
and $\lb \phi_{s_1},\phi_{s_2},\ldots,\phi_{s_i} \rb$, respectively.
\smallskip

For each $i\in S$, let   
\begin{align*}
    {\chi^I(\nu)}_{i} := 
\begin{cases}
    \mathbbm{1}  &   \text{if } i \in I,\\
    \mathbbm{reg-1}  & \text{if } i \in S\setminus I.
\end{cases}
\end{align*}
It follows from Proposition~\ref{thm: supercharacter calculation} that \[\chi^I(\nu)=\lb(\chi^I(\nu)_{i})_{i\in S} \rb.\]
For example, if $I = \{ 1,6 \} \subseteq S=\{1,2,4,6,7\}$, then
\[\chi^I(\nu) = \lb \mathbbm{1}, \mathbbm{reg-1}, \underline{\hspace{0.5cm}},\mathbbm{reg-1}, \underline{\hspace{0.5cm}}, \mathbbm{1}, \mathbbm{reg-1} \rb.\]

\subsection{A Hopf algebra structure of $\bigoplus_{n \ge 0} \scf(\mathcal{N}_n(\nu))$}
Hereafter we will simply write  $\scf(\mathcal{N}_n(\nu))$ for $\scf(\mathcal S(\mathcal{N}_n(\nu)))$, the supercharacter function space of $\mathcal S(\mathcal{N}_n(\nu))$. 

Given a subset $S=\{s_1<s_2< \cdots < s_t\}$ of $[n-1]$ with $t:=|S|$,  
consider the {\emph {standardization}}, i.e., the group isomorphism
\begin{align*}
    \iota_S: Q_S(\nu) \to Q_{t+1}(\nu), \quad 
    (g_{s})_{s\in S} \mapsto (g_{s_i})_{1\le i \le t}.
\end{align*}
This induces a $\mathbb C$-linear isomorphism 
\begin{align*}
    {{\iota}_S}^\ast  : \cf(Q_{t+1}(\nu)) \to \cf(Q_S(\nu)), \quad 
    \phi \mapsto \phi \circ  {\rm stan}.
\end{align*}
For simplicity, we set $\iota_S:= {{\rm stan}_S}^\ast$.

Given a subset $S=\{s_1<s_2< \cdots < s_t\}$ of $[n-1]$ with $t:=|S|$,  
consider the group isomorphism 
\begin{align*}
    \iota_S: Q_S(\nu) \to Q_{t+1}(\nu), \quad 
    (g_{s})_{s\in S} \mapsto (g_{s_i})_{1\le i \le t},
\end{align*}
which is obtained by the standization of indices. 
This induces a $\mathbb C$-linear isomorphism 
\begin{align*}
    {{\iota}_S}^\ast  : \cf(Q_{t+1}(\nu)) \to \cf(Q_S(\nu)), \quad 
    \phi \mapsto \phi \circ {\iota}_S.
\end{align*}
For $I\subseteq [t]$, let $S_I := \{ s_i \mid i \in I \}$.
Since ${{\iota}_S}^\ast(\chi^I(\nu)) = \chi^{S_I}(\nu)$,
this isomorphism restricts to the following isomorphism of the supercharacter function spaces:
\begin{align*}
    {{\iota}_S}^\ast : \scf(\mathcal{N}_{t+1}(\nu)) \to \scf(\mathcal{N}_S(\nu)), \quad 
    \chi^I(\nu) \mapsto \chi^{S_I}(\nu).
\end{align*}
Let us identify $Q_S(\nu) \times Q_{S^c}(\nu)$ with $Q_{[n-1]}(\nu)$ in the natural way.
For $\phi \in \cf(Q_S(\nu))$ and $\psi \in \cf(Q_{S^c}(\nu))$, let $\phi \otimes_S \psi$ be the class function of $Q_{[n-1]}(\nu)$ defined by
\begin{align*}
    (\phi \otimes_S \psi) (a,b) = \phi(a) \psi(b),
\end{align*}
where $(a,b) \in Q_S(\nu) \times Q_{S^c}(\nu) = Q_{[n-1]}(\nu)$. 
It is easy to see that if $\phi \in \scf(\mathcal{N}_S(\nu))$ and $\psi \in \scf(\mathcal{N}_{S^c}(\nu))$ then $(\phi \otimes_S \psi) \in \scf(\mathcal{N}_n(\nu))$.
In case where $S = [n-2]$, or equivalently $S^c = \{n-1 \}$, 
we simply write $\otimes_1$ for $\otimes_S$.

Let $A$ be a subset of $[n]$. 
A subset of $A$ is said to be \emph{connected $($in $A)$} if it consists of consecutive integers or a single element,
and \emph{maximally connected $($in $A)$} if it is maximal among connected subsets
with respect to inclusion order.
Let 
\[\textsf{conn}(A):=\{\text{maximally connected subsets of $A$}\}.\] 
Label the subsets in $\textsf{conn}(A)$ as $A_1, A_2, A_3, \ldots$, where  
$\min A_i < \min A_j$ if $i<j$.
Let 
\begin{align*}
& c_1(A) : = \{ \max B \mid B \in \textsf{conn}(A)  \} \setminus \{ n \},\\ 
& c_2(A) := \{ \max B  \mid B \in \textsf{conn}(A^c) \} \setminus \{ n \},\\
& c(A) := c_1(A) \cup c_2(A).
\end{align*}
\begin{example}
Let $n=9$ and $A = \{1,2,5,7,8,9\} \subseteq [9]$. 
Then $A_1 = \{1,2\}$, $A_2 =\{5\}$, $A_3 =\{7,8,9\}$ are all maximally connected subsets of $A$.
Moreover, $c_1(A) = \{2, 5 \}$, $c_2(A) = \{4, 6 \}$, and thus $c(A) = \{2,4,5,6 \}$.
\end{example}

In what follows, we will define a product and a coproduct on $\bigoplus_{n \ge 0} \scf(\mathcal{N}_n(\nu))$. 
To do this, we need the following notations.
\begin{itemize}
\item 
Let $G$ be a group and $H$ a subgroup of $G$. 
For $\phi \in \cf(G)$, denote by $\phi\downarrow^{G}_{H}$
the restriction of $\phi$ from $G$ to $H$.

\item
For positive integers $a$ and $b$, 
denote by $\binom{[a]}{b}$ the collection of subsets of $[a]$ with size $b$.

\item
For $I \subseteq S$, set 
$\dot\chi^I(\nu):=\dfrac{\chi^{I}(\nu)}{\chi^{I}(\nu)(\bm 0)}$.

\item
Let $m,n\ge 0$ and $A \in \binom{[m+n]}{n}$.
For $\phi \in \cf(Q_m(\nu))$ and $\psi \in \cf(Q_n(\nu))$,
set
\begin{equation}\label{def of s(phi,psi)}
{\bf s}_A(\phi, \psi) := 
\left({{\iota}_{A^c}}^\ast\left(\phi \otimes_1 \dfrac{\mathbbm{reg - 1}}{\nu-1}\right)\right) \otimes_{A^c} \left({{\iota}_{A}}^\ast\left(\psi \otimes_1 \dfrac{\mathbbm{reg - 1}}{\nu-1} \right)\right).
\end{equation}
Here, we are viewing $\otimes_{A^c}$ as a map
\begin{align*}
\otimes_{A^c}  : \cf(Q_{A^c}(\nu)) \times \cf(Q_{[m+n] \setminus A^c}(\nu)) \ra \cf(Q_{m+n+1}(\nu)).
\end{align*}
\end{itemize}

\begin{definition}\label{def: product of scf}
Let $m$ and $n$ be nonnegative integers.
\begin{enumerate}[label = {\rm (\alph*)}]
\item
For $A \in \binom{[m+n]}{n}$, let  
\[{\bf m}_A:\cf(Q_m(\nu)) \times \cf(Q_n(\nu)) \to \cf(Q_{m+n}(\nu))\]
be the $\mathbb C$-bilinear map given by 
\[
{\bf m}_A(\phi, \psi) 
=
\begin{cases}
\phi \psi &\text{ if } m = 0 \text{ or } n =0,\\
\dot\chi^{c_1(A)}(\nu) \otimes_{c(A)} 
\left( {\bf s}_A(\phi, \psi)  \big\downarrow^{Q_{m+n+1}(\nu)}_{Q_{[m+n-1] \setminus c(A)}(\nu)} \right) &\text{ otherwise}
\end{cases}
\]
for $\phi \in \cf(Q_m(\nu))$ and $\psi \in \cf(Q_n(\nu))$.
Here, we are viewing $\dot\chi^{c_1(A)}(\nu)$ as an element of 
$\cf(Q_A(\nu))$ and $\otimes_{c(A)}$ as a map
\begin{align*}
\otimes_{c(A)} : \cf(Q_{c(A)}(\nu)) \times \cf(Q_{[m+n-1] \setminus c(A)}(\nu)) \ra \cf(Q_{m+n}(\nu)). 
\end{align*}

\item
We define 
\[{\bf m}:\cf(Q_m(\nu)) \times \cf(Q_n(\nu)) \to \cf(Q_{m+n}(\nu))\] 
by
$$
{\bf m}:=
\sum_{A \in \binom{[m+n]}{n}} {\bf m}_A.
$$
\end{enumerate}
\end{definition}

For $n \ge 2$ and $\phi \in \cf(Q_n(\nu))$ and $1 \ge k \ge n-1$,
let $(\phi_1^k, \phi_2^k)$ be a pair satisfying that 
\begin{align*}
&\phi_1^k \in \cf(Q_{[1,k-1]}(\nu)),\\
&\phi_2^k \in \cf(Q_{[k+1,n-1]}(\nu)), \text{ and }\\
&\phi \big\downarrow^{Q_n(\nu)}_{Q_{[1,k-1] \sqcup [k+1,n-1] }(\nu)} = \phi_1^k \otimes_{[1,k-1]} \phi_2^k.
\end{align*}

\begin{definition}\label{def: coproduct of scf}
Let $n$ be a nonnegative integer.

\begin{enumerate}[label = {\rm (\alph*)}]

\item
For $0\le k \le n$, let 
\[\blacktriangle_k:\cf(Q_{n}(\nu))\to \cf(Q_k(\nu)) \otimes \cf(Q_{n-k}(\nu))\]
be the $\mathbb C$-linear map given by
\begin{equation}\label{eq: def of kth coproduct}
\blacktriangle_k(\phi)
=\begin{cases}
\mathbbm{1}_0 \otimes \phi & \text{ if } k=0, \\
\phi_1^k \otimes (\iota^\ast_{[k+1,n-1]})^{-1} (\phi_2^k) \quad & \text{ if } 1\le k< n, \\
\phi \otimes \mathbbm{1}_0 & \text{ if } k=n, 
\end{cases}
\end{equation}
where $\phi \in \cf(Q_n(\nu))$.

\item
We define 
\[\blacktriangle:\cf(Q_{n}(\nu))\to \bigoplus_{k\in [n-1]}\cf(Q_k(\nu)) \otimes \cf(Q_{n-k}(\nu))\]
by 
$$
\blacktriangle := 
\sum_{k=0}^n \blacktriangle_k.
$$
\end{enumerate}
\end{definition}

\begin{remark}
It should be remarked that the pair $(\phi_1^k, \phi_2^k)$ is not unique, but the value of $\blacktriangle_k(\phi)$ does not depend on the choices of this pair.
\end{remark}

Extending ${\bf m}$ bi-additively
and $\blacktriangle$ additively, 
we obtain a $\mathbb C$-bilinear map
\begin{align*}
{\bf m}:\bigoplus_{n \ge 0} \cf(Q_n(\nu))\otimes
\bigoplus_{n \ge 0}\cf(Q_n(\nu)) \to \bigoplus_{n \ge 0}\cf(Q_n(\nu))
\end{align*}
and a $\mathbb C$-linear map
\begin{align*}
\blacktriangle:\bigoplus_{n \ge 0}\cf(Q_n(\nu)) \to \bigoplus_{n \ge 0}\cf(Q_n(\nu)) \otimes \bigoplus_{n \ge 0} \cf(Q_n(\nu)).
\end{align*}

In the following, we will show that ${\bf m}$ and $\blacktriangle$ restrict to the supercharacter function spaces.
For this purpose, we first introduce necessary notation.

\begin{definition}\label{def: I preshuffle J}
Let $m$ and $n$ be nonnegative integers
and let $A \in \binom{[m+n]}{n}$, $I \subseteq [m-1]$, and $J \subseteq [n-1]$.   

\begin{enumerate}[label = {\rm (\alph*)}]
\item
Let 
\begin{align*}
    (I \#_A J)' := (A^c)_{I'} \sqcup A_{J'} \quad (\subseteq [m+n]).
\end{align*}
Here $I'=I, J'=J$, but we are viewing them as $I' \subseteq [m]$ and $J' \subseteq [n]$.

\item
The \emph{$A$-preshuffle} $I \#_A J$ of $I$ and $J$ is defined by the subset of $[m+n-1]$ 
with the same elements as $(I \#_A J)'$.

\item 
Let
$$I \shuffle_A J := c_1(A) \sqcup ((I \#_A J) \setminus c(A)).$$
\end{enumerate}
\end{definition}
For $S =\{s_1, s_2, \ldots, s_t\} \subseteq [k+1, n-1]$, let
\[
S-k:=\{s_1 -k, s_2 -k, \ldots, s_t -k \} \quad (\subseteq [n-k+1]).
\]
\begin{lemma}\label{thm: categorification of QSym 0}
We have the following.

\begin{enumerate}[label = {\rm (\alph*)}]
\item 
Let $m$ and $n$ be nonnegative integers
and $A \in \binom{[m+n]}{n}$. 
For $I\subseteq [m-1]$ and $J\subseteq [n-1]$,  
we have 
\[
{\bf m}_A \left( \dot\chi^I(\nu), \dot\chi^J(\nu) \right) = \dot\chi^{I \shuffle_A J}(\nu).\]

\item 
Let $n\ge 2$ and $1\le k\le n-1$. For $I\subseteq [n-1]$, we have 
\[
\blacktriangle_k(\dot\chi^I(\nu)) =
\dot\chi^{I \cap [k-1]}(\nu) \otimes 
\dot\chi^{ (I \cap [k+1,n-1]) - k}(\nu).\]
\end{enumerate}
\end{lemma}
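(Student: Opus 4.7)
The plan is to verify both parts by unwinding definitions and tracking the coordinate factorization of normalized supercharacters. From Proposition~\ref{thm: supercharacter calculation} together with $\chi^I(\nu)(\bm 0) = (\nu-1)^{|I^c|}$, one obtains
\[
\dot\chi^I(\nu) = \lb (g_i)_{i \in S} \rb, \quad g_i = \mathbbm{1} \text{ if } i \in I, \quad g_i = \tfrac{\mathbbm{reg} - \mathbbm{1}}{\nu - 1} \text{ if } i \in S \setminus I.
\]
Crucially, both possible factors satisfy $g_i(0) = 1$; this will make restrictions behave cleanly and is the key observation underlying the entire computation.

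For part (a), I would unpack $\mathbf{s}_A(\dot\chi^I(\nu),\dot\chi^J(\nu))$ coordinate by coordinate. The operation $\dot\chi^I(\nu) \otimes_1 \tfrac{\mathbbm{reg}-\mathbbm{1}}{\nu-1}$ appends one extra coordinate factor of the same type, hence equals $\dot\chi^I(\nu)$ viewed as a function on $Q_{m+1}(\nu)$ with $I \subseteq [m]$. The identity $\iota_S^*(\chi^I(\nu)) = \chi^{S_I}(\nu)$ passes to the normalized versions (the normalizing constants on both sides agree since $|S_I^c \cap S| = |S|-|I|$), and applying it on each side yields
\[
\mathbf{s}_A(\dot\chi^I(\nu),\dot\chi^J(\nu)) = \dot\chi^{(A^c)_I}(\nu) \otimes_{A^c} \dot\chi^{A_J}(\nu) = \dot\chi^{(I \#_A J)'}(\nu)
\]
as a function on $Q_{m+n+1}(\nu)$. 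Because $I \subseteq [m-1]$ and $J \subseteq [n-1]$, the set $(I \#_A J)'$ avoids $m+n$ and identifies with $I \#_A J \subseteq [m+n-1]$. Restricting to $Q_{[m+n-1] \setminus c(A)}(\nu)$ sets the coordinates in $c(A) \cup \{m+n\}$ to $0$; since each $g_i(0)=1$, this merely drops those coordinate factors, producing $\dot\chi^{(I \#_A J) \setminus c(A)}(\nu)$. Finally, $\dot\chi^{c_1(A)}(\nu) \in \cf(Q_{c(A)}(\nu))$ has coordinate factor $\mathbbm{1}$ precisely on $c_1(A)$, so the outer $\otimes_{c(A)}$ concatenates coordinate factors to give $\dot\chi^{c_1(A) \sqcup ((I \#_A J) \setminus c(A))}(\nu) = \dot\chi^{I \shuffle_A J}(\nu)$.

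For part (b), restricting $\dot\chi^I(\nu) = \lb (g_i)_{i \in [n-1]} \rb$ to $Q_{[1,k-1] \sqcup [k+1,n-1]}(\nu)$ (whose elements are $0$ in coordinate $k$) reduces to evaluating $g_k(0) = 1$ and keeping the remaining factors; the restriction therefore factors as $\dot\chi^{I \cap [k-1]}(\nu) \otimes_{[1,k-1]} \dot\chi^{I \cap [k+1,n-1]}(\nu)$. Applying $(\iota_{[k+1,n-1]}^*)^{-1}$ to the second factor reindexes the subset $I \cap [k+1,n-1]$ by position in $[k+1,n-1]$, which is exactly subtraction by $k$, yielding the asserted formula.

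The main subtlety is accurate bookkeeping of the various reindexings via $\iota_S$ and correctly identifying the ambient coordinate sets at each step; no single step is conceptually difficult. The essential mechanism enabling every simplification is the vanishing-on-identity property $g_i(0)=1$, so that neither restrictions nor the auxiliary insertions of $\tfrac{\mathbbm{reg}-\mathbbm{1}}{\nu-1}$ introduce spurious scalars.
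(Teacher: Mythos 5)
Your proof is correct and follows essentially the same route as the paper: writing $\dot\chi^I(\nu)$ in coordinate-factor form, tracking how $\mathbf{s}_A$, the restriction, and $\otimes_{c(A)}$ act factorwise, with the key fact that both factors $\mathbbm{1}$ and $\tfrac{\mathbbm{reg}-\mathbbm{1}}{\nu-1}$ take the value $1$ at $0$ so restrictions simply delete factors. The only difference is cosmetic bookkeeping (you phrase the intermediate steps as identifications with normalized supercharacters on re-indexed groups, while the paper writes out the coordinate vectors explicitly), so no changes are needed.
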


\begin{proof}
(a) 
Let
\begin{align*}
\dot\chi^I(\nu)_i := 
\begin{cases}
    \mathbbm{1}  & \text{if } i \in I,\\
    \dfrac{\mathbbm{reg-1}}{\nu-1}             & \text{if } i \in [n-1] \setminus I.
\end{cases}
\end{align*}
Since $\chi^I(\nu)(\bm 0) = (\nu-1)^{|I^c|}$, it follows from Proposition~\ref{thm: supercharacter calculation} that   
\[\lb\left(\dot\chi^I(\nu)_i\right)_{i\in [n-1]}\rb= \dot\chi^I(\nu).\]
Let 
\begin{align*}
   {\bf s}_A(\phi, \psi)_i := 
\begin{cases}
    \mathbbm{1}  & \text{if } i \in (I \#_A J)',\\
    \dfrac{\mathbbm{reg-1}}{\nu-1}              & \text{otherwise}
\end{cases}
\end{align*}
for all $i \in [m+n]$.
Combining Definition~\ref{def: I preshuffle J} with~\eqref{def of s(phi,psi)}, 
we derive that 
\[\lb \left( {\bf s}_A(\phi, \psi)_i \right)_{i \in [m+n]} \rb = {\bf s}_A(\phi, \psi).\]
Next, for each $i \in [m+n-1] \setminus c(A)$, we let 
\begin{align*}
    \left({{\bf s}_A(\phi, \psi) \big\downarrow^{Q_{m+n+1}}_{Q_{[m+n-1] \setminus c(A)}}} \right)_i := 
\begin{cases}
    \mathbbm{1}  & \text{if } i \in (I \#_A J) \setminus c(A), \\
    \dfrac{\mathbbm{reg-1}}{\nu-1}             & \text{otherwise}.
\end{cases}
\end{align*}
In view of
$$
\frac{(\mathbbm{reg - 1})(0)}{\nu-1} = \frac{\nu-1}{\nu-1} = 1 \quad \text{and} \quad \mathbbm{1}(0) = 1,
$$
we have 
\[\lb ((({\bf s}_A(\phi, \psi)) \big\downarrow^{Q_{m+n+1}}_{Q_{[m+n-1] \setminus c(A)}})_i)_{i \in [m+n-1] \setminus c(A)} \rb = {\bf s}_A(\phi, \psi) \big\downarrow^{Q_{m+n+1}}_{Q_{[m+n-1] \setminus c(A)}}.\]
Finally, let 
\begin{align*}
    {\bf m}_A(\dot\chi^I(\nu), \dot\chi^J(\nu))_i :=
\begin{cases}
    \mathbbm{1}  & \text{if } i \in c_1(A) \sqcup ((I \#_A J) \setminus c(A)), \\
    \dfrac{\mathbbm{reg-1}}{\nu-1}              & \text{otherwise}
\end{cases}
\end{align*}
for $i \in [m+n-1]$. 
It holds that 
\begin{align*}
    \lb \left({\bf m}_A(\dot\chi^I(\nu), \dot\chi^J(\nu))_i\right)_{i \in [m+n-1]} \rb = 
    \dot\chi^{c_1(A)}(\nu) \otimes_{c(A)} 
\left( {\bf s}_A(\phi, \psi)  \big\downarrow^{Q_{m+n+1}(\nu)}_{Q_{[m+n-1] \setminus c(A)}(\nu)} \right).
\end{align*}
Now the desired result follows from  Definition~\ref{def: product of scf} (a) and~\ref{def: I preshuffle J} (c).

(b) 
Let $\phi = \dot\chi^I(\nu)$, and take
$\phi^k_1 = \lb ({\phi^k_1}_i)_{i \in [k-1]} \rb, \phi^k_2 = \lb ({\phi^k_2}_i)_{i \in [k+1,n-1]} \rb$ as follows:
\begin{align*}
    {\phi^k_1}_i  &= 
\begin{cases}
     \mathbbm{1}  & \text{if } i \in I \cap [k-1], \\
      \frac{\mathbbm{reg-1}}{\nu-1}              & \text{otherwise},
\end{cases}\\
    {\phi^k_2}_i     &=
\begin{cases}
      \mathbbm{1}  & \text{if } i \in I \cap [k+1,n-1], \\
       \frac{\mathbbm{reg-1}}{\nu-1}     &     \text{otherwise}.
\end{cases} 
\end{align*}
It is easy to show that 
\[
\dot\chi^I(\nu) \big\downarrow^{Q_n(\nu)}_{Q_{[1,k-1] \sqcup [k+1,n-1] }(\nu)} = \phi_1^k \otimes_{[1,k-1]} \phi_2^k
\]
and 
\[\phi^k_1 = \dot\chi^{I \cap [k-1]}(\nu) \quad \text{ and } \quad 
\phi^k_2 = \dot\chi^{ (I \cap [k+1,n-1]) - k}(\nu),\]
which proves the assertion.
\end{proof}

\begin{example} \label{first example for multiplication}
Let $m=4$ and $n=3$, and consider the case where 
$I = \{2,3\} \subseteq [m-1]$, $J = \{2\} \subseteq [n-1]$, and $A = \{1,3,4 \}$. 
Then we have
\begin{align*}
    \dot\chi^I(\nu) = 
    \lb \dfrac{\mathbbm{reg-1}}{\nu-1}, \mathbbm{1}, \mathbbm{1} \rb, \quad \text{and} \quad
    \dot\chi^J(\nu) = \lb \dfrac{\mathbbm{reg-1}}{\nu-1}, \mathbbm{1} \rb.
\end{align*}
This implies
\begin{align*}
    &\dot\chi^I(\nu) \otimes_1 \dfrac{\mathbbm{reg-1}}{\nu-1} = 
    \lb \dfrac{\mathbbm{reg-1}}{\nu-1}, \mathbbm{1}, \mathbbm{1}, \dfrac{\mathbbm{reg-1}}{\nu-1} \rb \text { and }\\
    &\dot\chi^J(\nu) \otimes_1 \dfrac{\mathbbm{reg-1}}{\nu-1} = \lb \dfrac{\mathbbm{reg-1}}{\nu-1}, \mathbbm{1}, \dfrac{\mathbbm{reg-1}}{\nu-1} \rb.
\end{align*}
Hence
\begin{align*}
     &{\bf s}_A(\dot\chi^I(\nu), \dot\chi^J(\nu))\\
     &={{\iota}_{A^c}}^\ast \left( \dot\chi^I(\nu) \otimes_1 \dfrac{\mathbbm{reg-1}}{\nu-1} \right) \otimes_{A^c} \left( {{\iota}_{A}}^\ast \left( \dot\chi^J(\nu) \otimes_1 \dfrac{\mathbbm{reg-1}}{\nu-1} \right) \right) \\
     &= 
    \lb \dfrac{\mathbbm{reg - 1}}{\nu-1},  \dfrac{\mathbbm{reg - 1}}{\nu-1}  ,  \mathbbm{1}  , \dfrac{\mathbbm{reg - 1}}{\nu-1}, \mathbbm{1}, \mathbbm{1}, \dfrac{\mathbbm{reg - 1}}{\nu-1} \rb
    \end{align*}
and therefore 
\begin{align*}
    {\bf s}_A(\dot\chi^I(\nu), \dot\chi^J(\nu))  \big\downarrow^{Q_{m+n+1}(\nu)}_{Q_{[m+n-1] \setminus c(A)}(\nu)} 
    = 
    \lb \, \underline{\hspace{0.5cm}},  \underline{\hspace{0.5cm}}  ,  \mathbbm{1}  ,  \underline{\hspace{0.5cm}},  \mathbbm{1},  \mathbbm{1} \rb 
    \in \cf(Q_{[m+n-1] \setminus c(A)}(\nu)).
\end{align*}
Combining the above calculation with the equality 
\[\dot\chi^{c_1(A)}(\nu) = \lb \mathbbm{1}, \dfrac{\mathbbm{reg - 1}}{\nu-1}, \underline{\hspace{0.5cm}},   \mathbbm{1}, \underline{\hspace{0.5cm}}, \underline{\hspace{0.5cm}} \rb \in \cf(Q_{c(A)}(\nu)),\] 
we see that  ${\bf m}_A(\dot\chi^I(\nu), \dot\chi^J(\nu))$ is equal to 
\begin{align*}
     &\dot\chi^{c_1(A)}(\nu) \otimes_{c(A)} ({\bf s}_A(\dot\chi^I(\nu), \dot\chi^J(\nu))  \big\downarrow^{Q_{m+n+1}(\nu)}_{Q_{[m+n-1] \setminus c(A)}(\nu)})\\
    &= \lb \mathbbm{1}, \dfrac{\mathbbm{reg - 1}}{\nu-1}, \mathbbm{1},   \mathbbm{1}, \mathbbm{1}, \mathbbm{1} \rb\\
    &= \dot\chi^{I \shuffle_A J}(\nu).
\end{align*}
\end{example}

\begin{example}
Let $n=5, I = \{1,3,4 \} \subseteq [4]$.
Then 
$$
\dot\chi^I(\nu) = \lb \mathbbm{1}, \dfrac{\mathbbm{reg - 1}}{\nu-1}, \mathbbm{1}, \mathbbm{1} \rb,
$$
thus 
\begin{align*}
  \blacktriangle_0(\dot\chi^I(\nu) ) &= \mathbbm{1}_0 \otimes \lb \mathbbm{1}, \dfrac{\mathbbm{reg - 1}}{\nu-1}, \mathbbm{1},  \mathbbm{1} \rb, & \blacktriangle_3(\dot\chi^I(\nu) ) &= \lb \mathbbm{1}, \dfrac{\mathbbm{reg - 1}}{\nu-1} \rb \otimes \lb \mathbbm{1} \rb,\\
  \blacktriangle_1(\dot\chi^I(\nu) ) &= \mathbbm{1}_1 \otimes \lb \dfrac{\mathbbm{reg - 1}}{\nu-1}, \mathbbm{1}, \mathbbm{1} \rb, &\blacktriangle_4(\dot\chi^I(\nu) ) &= \lb \mathbbm{1}, \dfrac{\mathbbm{reg - 1}}{\nu-1}, \mathbbm{1} \rb \otimes \mathbbm{1}_1,\\
  \blacktriangle_2(\dot\chi^I(\nu) ) &= \lb \mathbbm{1} \rb \otimes \lb \mathbbm{1}, \mathbbm{1} \rb,
   &\blacktriangle_5(\dot\chi^I(\nu) ) &= \lb \mathbbm{1}, \dfrac{\mathbbm{reg - 1}}{\nu-1}, \mathbbm{1}, \mathbbm{1} \rb \otimes \mathbbm{1}_0.
\end{align*}
\end{example}

Lemma~\ref{thm: categorification of QSym 0} implies that the maps ${\bf m}$ and $\blacktriangle$ restrict to the supercharacter function spaces. Thus we have  
\begin{equation}\label{def of product and coproduct}
\begin{aligned}
{\bf m}:\bigoplus_{n \ge 0} \scf(\mathcal{N}_n(\nu))\otimes
\bigoplus_{n \ge 0}\scf(\mathcal{N}_n(\nu)) \to \bigoplus_{n \ge 0}\scf(\mathcal{N}_n(\nu)),\\
\blacktriangle:\bigoplus_{n \ge 0}\scf(\mathcal{N}_n(\nu)) \to \bigoplus_{n \ge 0}\scf(\mathcal{N}_n(\nu)) \otimes \bigoplus_{n \ge 0} \scf(\mathcal{N}_n(\nu)).
\end{aligned}
\end{equation}

For each nonnegative integer $n$, define 
\begin{align*}
    \ch^n_\nu : \scf(\mathcal{N}_n(\nu)) \to \QSym_n
\end{align*}
by the $\mathbb C$-vector space isomorphism given by 
\[\dot\chi^I(\nu)\mapsto L_{\comp(I)} \quad (I \subset [n-1]).\] 
This induces a $\mathbb C$-vector space isomorphism:
\begin{align*}
    \ch_\nu :=\bigoplus_{n \ge 0}\ch^n_\nu: \bigoplus_{n \ge 0} \scf(\mathcal{N}_n(\nu)) \to \QSym
\end{align*}

\begin{lemma}\label{lem: categorification of QSym}
For nonnegative $m$ and $n$, the following hold.

\begin{enumerate}[label = {\rm (\alph*)}]
\item
For $I\subseteq [m-1]$ and $J\subseteq [n-1]$, we have
\begin{align*}
    \ch_\nu ({\bf m}(\dot\chi^I(\nu),\dot\chi^J(\nu)))=L_{\comp(I)} L_{\comp(J)}. 
\end{align*}

\item 
For $I\subseteq [n-1]$, we have
\begin{align*}
    \ch_\nu \otimes \ch_\nu (\blacktriangle(\dot\chi^I(\nu)))=
    \triangle(L_{\comp(I)}),
\end{align*}
where $\triangle$ denotes the coproduct of $\QSym$.
\end{enumerate}
\end{lemma}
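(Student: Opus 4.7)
The plan is to deduce both parts by combining Lemma~\ref{thm: categorification of QSym 0} with the surjection $\pi \colon \FQSym \twoheadrightarrow \QSym$ of Theorem~\ref{thm: FQSym to QSym}, which sends $F_w \mapsto L_{\comp(\Des(w))}$. After Lemma~\ref{thm: categorification of QSym 0} rewrites the left-hand side of each identity as a sum of $L$-functions, the task becomes a purely combinatorial comparison of descent sets on the $\QSym$ side.

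For (a), applying $\ch_\nu$ to Lemma~\ref{thm: categorification of QSym 0}(a) reduces the claim to the identity
\[
L_{\comp(I)}\, L_{\comp(J)} \;=\; \sum_{A \in \binom{[m+n]}{n}} L_{\comp(I \shuffle_A J)}.
\]
I would fix any $u \in \SG_m$ and $v \in \SG_n$ with $\Des(u) = I$ and $\Des(v) = J$ (such permutations always exist, since every subset of $[k-1]$ arises as a descent set in $\SG_k$), and then expand $\pi(F_u\, F_v)$ via the shuffle product in $\FQSym$, parametrizing each $\sigma \in \textsf{Sh}_{m,n}$ by the $n$-subset $A := \{\sigma^{-1}(m+i) : 1 \le i \le n\} \subseteq [m+n]$. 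The remaining and principal combinatorial step is to verify
\[
\Des(u \shuffle_\sigma v[m]) \;=\; I \shuffle_A J.
\]
I would prove this by case analysis on the pair of positions $(i, i+1)$ relative to $A$ and $A^c$: when both lie in $A^c$, a descent at $i$ corresponds to an internal descent of $u$, contributing positions in $(A^c)_{I'} \setminus c(A)$; when both lie in $A$, it corresponds to a descent of $v$, contributing $A_{J'} \setminus c(A)$; when $i \in A$ and $i+1 \in A^c$, the shift of $v$-values by $+m$ forces a descent, contributing exactly $c_1(A)$; when $i \in A^c$ and $i+1 \in A$, no descent can occur. Assembling these four cases yields $c_1(A) \sqcup ((I \#_A J) \setminus c(A))$, which is precisely $I \shuffle_A J$ by Definition~\ref{def: I preshuffle J}. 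I expect this descent-set identification to be the main obstacle.

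For (b), Lemma~\ref{thm: categorification of QSym 0}(b), supplemented by the boundary prescriptions $k = 0, n$ in~\eqref{eq: def of kth coproduct}, yields
\[
(\ch_\nu \otimes \ch_\nu)(\blacktriangle(\dot\chi^I(\nu))) \;=\; \sum_{k=0}^{n} L_{\comp(I \cap [k-1])} \otimes L_{\comp((I \cap [k+1,n-1]) - k)}.
\]
On the $\QSym$ side, I would pick any $u \in \SG_n$ with $\Des(u) = I$ and apply $\pi \otimes \pi$ to the coproduct $\triangle F_u = \sum_{k} F_{\std(u_1 \cdots u_k)} \otimes F_{\std(u_{k+1} \cdots u_n)}$. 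Since standardization preserves the descent set of a word, and $\Des(u_1 \cdots u_k) = I \cap [k-1]$ and $\Des(u_{k+1} \cdots u_n) = (I \cap [k+1, n-1]) - k$, the two expressions coincide term by term. This part is a routine transcription once the coproduct formula in $\FQSym$ is invoked; the genuine difficulty lies in the descent-set computation for shuffles in (a).
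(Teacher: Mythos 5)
Your proposal is correct and follows essentially the same route as the paper: reduce via Lemma~\ref{thm: categorification of QSym 0} to identities among fundamental quasisymmetric functions, lift through the Hopf surjection $\pi:\FQSym\to\QSym$ using permutations with prescribed descent sets, and verify $\Des(u\shuffle_\sigma v[m]) = I\shuffle_A J$ together with the descent sets of standardized prefixes and suffixes. Your four-case analysis of adjacent positions is just a slightly more explicit organization of the paper's argument (forced descents at $c_1(A)$, forced non-descents at $c_2(A)$, internal descents governed by $I$ and $J$), so there is nothing substantive to add.
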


\begin{proof}
By Lemma~\eqref{thm: categorification of QSym 0} (a), it suffices to show that 
\begin{equation}\label{eq: L-product, subset parameter}
L_{\comp(I)} L_{\comp(J)} = \sum_{A \in \binom{[m+n]}{n}} L_{\comp(I \shuffle_A J)}.
\end{equation}
Let $w_I \in \SG_m$ be a permutation with $\Des(w_I) = I$ and 
$w_J \in \SG_n$ be a permutation with $\Des(w_J) = J$. 
Theorem~\ref{thm: FQSym to QSym} implies that  
\begin{equation}\label{eq: L-product, set parameter}
L_{\comp(I)} L_{\comp(J)} = \sum_{w \in w_I \shuffle w_J[m]} L_{\comp(\Des(w))}.
\end{equation}
Note that $\textsf{Sh}_{m,n}$ in~\eqref{notation for the set of shuffles} is in bijection with $\binom{[m+n]}{n}$ under the following assignment: 
$$
\sigma \mapsto \{ \sigma^{-1}(m+1), \sigma^{-1}(m+2), \ldots, \sigma^{-1}(m+n) \}.
$$
For $A \in \binom{[m+n]}{n}$, we write  
$\shuffle_A$ for $\shuffle_{\sigma}$, 
where $\sigma$ is the permutation corresponding to $A$ under this bijection.
With this notation, for any word $v$ of length $m$ and $w$ of length $n$, we have the equality:
$$
v \shuffle w = \left\{ v \shuffle_A w \mid A \in \binom{[m+n]}{n} \right\}.
$$
The word $w_I \shuffle_A w_J[m]$ can be obtained from $w_I$ and $w_J[m]$ in the following manner: 
\begin{itemize}
    \item Place the alphabets of $w_J[m]$ in order in the positions occupied by $A$.
    \item Place the alphabets of $w_I$ in order in the positions occupied by $A^c$.
\end{itemize}
Since any alphabet in $w_J[m]$ is larger than the greatest alphabet in $w_I$, it follows that 
$$
c_1(A) \subseteq \Des(w_I \shuffle_A w_J[m]).
$$
Similarly, since any alphabet in $w_I$ is smaller than the smallest alphabet in $w_J[m]$, it follows that 
$$
c_2(A) \subseteq \Des(w_I \shuffle_A w_J[m])^c.
$$
The positions which are not occupied by $c_1(A)$ and $c_2(A)$ depends only on the descent pattern of $w_I \shuffle_A w_J$, 
which implies 
$$
\Des(w_I \shuffle_A w_J[m]) \setminus c(A) = \Des(w_I \shuffle_A w_J) \setminus c(A).
$$
Since 
$$
(I \#_A J) \setminus c(A) = \Des(w_I \shuffle_A w_J)\setminus c(A), 
$$
we have 
\begin{equation*}
\Des(w_I \shuffle_A w_J[m]) = c_1(A) \sqcup ((I \#_A J) \setminus c(A)).
\end{equation*}
It follows that~\eqref{eq: L-product, set parameter} is equal to~\eqref{eq: L-product, subset parameter}.

(b) By Lemma~\eqref{thm: categorification of QSym 0} (b), it suffices to show that 
\begin{align*}
&\triangle L_{\comp(I)} \\
&= L_{\emptyset} \otimes L_{\comp(I)} +  
\sum_{k=1}^{n-1} L_{\comp(I \cap [k-1])} \otimes L_{\comp( (I \cap [k+1,n-1]) - k)} 
+  L_{\comp(I)} \otimes L_{\emptyset}.
\end{align*}
On the other hand, Theorem~\ref{thm: FQSym to QSym} implies that 
$$
\triangle L_{\comp(\Des(w))} = \sum_{k=0}^{n} L_{\comp(\Des( \std(w_1 \ldots w_k) ))} \otimes L_{\comp(\Des( \std(w_{k+1} \ldots w_{n}) ))}.
$$
Let $w_I \in \SG_n$ be a permutation with $\Des(w_I) = I$. 
It is easy to see that 
\begin{align*}
    \Des( \std((w_I)_1 \ldots (w_I)_k) ) &= I \cap [k-1] \subseteq [k-1],\\
    \Des( \std((w_I)_{k+1} \ldots (w_I)_{n}) ) &= (I \cap [k+1, n-1]) - k \subseteq [n-k-1],
\end{align*}
for $k = 1, \ldots, n-1$, as required.
\end{proof}

\begin{example}\label{ex: shuffle of I, J}
Let us revisit 
Example~\ref{first example for multiplication}.
Take $w_I = 1432$, $w_J = 132$.
Then $c_1(A) = \{1,4\}$, $c_2(A) = \{ 2 \}$, and $I \#_A J =\{ 5,6 \} \sqcup \{3 \} $. As a consequence, we have 
$$
w_I \shuffle_A w_J[m] = \{1,4\} \sqcup (\{3,5,6 \} \setminus \{1,2,4\})
= \{1,3,4,5,6 \}.
$$
\end{example}

By Lemma~\ref{lem: categorification of QSym}, we have the following commuting diagrams:
\[\begin{tikzcd}
	\bigoplus_{n \ge 0}\scf(\mathcal{N}_n(\nu))\otimes \bigoplus_{n \ge 0}\scf(\mathcal{N}_n(\nu)) & \bigoplus_{n \ge 0}\scf(\mathcal{N}_n(\nu)) \\
	\QSym \otimes \QSym & \QSym \\
	\bigoplus_{n \ge 0}\scf(\mathcal{N}_n(\nu))\otimes \bigoplus_{n \ge 0}\scf(\mathcal{N}_n(\nu)) & \bigoplus_{n \ge 0}\scf(\mathcal{N}_n(\nu)) \\
	\QSym \otimes \QSym & \QSym
	\arrow[from=1-1, to=1-2, "{\bf m}"]
	\arrow[from=1-2, to=2-2, "\ch_\nu"]
	\arrow[from=2-1, to=2-2, "\cdot"]
	\arrow[from=1-1, to=2-1, "\ch_\nu \otimes \ch_\nu"]
	\arrow[from=3-2, to=3-1, "\blacktriangle"]
	\arrow[from=4-2, to=4-1, "\triangle"]
	\arrow[from=3-2, to=4-2, "\ch_\nu"]
	\arrow[from=3-1, to=4-1, "\ch_\nu \otimes \ch_\nu"]
\end{tikzcd}\]

Now we are ready to state the main result of this section.

\begin{theorem}\label{thm: categorification of QSym}
We have the following.

\begin{enumerate}[label = {\rm (\alph*)}]
\item
$(\bigoplus_{n \ge 0} \scf(\mathcal{N}_n(\nu)), {\bf m}, \blacktriangle)$
has a Hopf algebra structure.

\item
The characteristic map 
\begin{align*}
    \ch_\nu : \bigoplus_{n \ge 0} \scf(\mathcal{N}_n(\nu)) \to \QSym
\end{align*}
is an isomorphism of Hopf algebras.
\end{enumerate}
\end{theorem}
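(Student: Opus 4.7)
The plan is to transport the Hopf algebra structure of $\QSym$ along the characteristic map $\ch_\nu$, using Lemma~\ref{lem: categorification of QSym} as the bridge. First, I would record the basic facts about $\ch_\nu$: it is a graded $\mathbb{C}$-linear map that sends the basis $\{\dot\chi^I(\nu) \mid I \subseteq [n-1]\}$ of $\scf(\mathcal{N}_n(\nu))$ bijectively onto the basis $\{L_{\comp(I)} \mid I \subseteq [n-1]\}$ of $\QSym_n$, and so is a $\mathbb{C}$-vector space isomorphism in each degree. Moreover, the degree-$0$ and degree-$1$ components of $\bigoplus_{n\ge 0}\scf(\mathcal{N}_n(\nu))$ are both one-dimensional, so the graded space is connected.

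Next, I would invoke Lemma~\ref{lem: categorification of QSym}. Part (a) yields $\ch_\nu({\bf m}(\dot\chi^I(\nu),\dot\chi^J(\nu))) = L_{\comp(I)} L_{\comp(J)}$ on basis elements; since ${\bf m}$ is bilinear and the $\QSym$-product is bilinear, this identity extends by $\mathbb{C}$-bilinearity to the whole space, giving $\ch_\nu \circ {\bf m} = m_{\QSym} \circ (\ch_\nu \otimes \ch_\nu)$. Likewise, part (b) gives $(\ch_\nu \otimes \ch_\nu) \circ \blacktriangle = \triangle \circ \ch_\nu$, where $\triangle$ is the coproduct of $\QSym$. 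These are exactly the commuting squares displayed just before the theorem, but now read as identifying ${\bf m}$ and $\blacktriangle$ with the pullback of the $\QSym$-operations through the vector-space isomorphism $\ch_\nu$.

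For part (a), associativity of ${\bf m}$, coassociativity of $\blacktriangle$, and the bialgebra compatibility between them now follow from the corresponding axioms in $\QSym$ by transport of structure along $\ch_\nu$. The unit is given by the canonical inclusion $\mathbb{C} \hookrightarrow \scf(\mathcal{N}_0(\nu))$ and the counit by projection onto the degree-$0$ component; each matches the unit and counit of $\QSym$ under $\ch_\nu$, which I would verify by inspecting the boundary cases $m = 0$ or $n = 0$ of Definition~\ref{def: product of scf} and the cases $k = 0, n$ of Definition~\ref{def: coproduct of scf}. Thus $(\bigoplus_{n\ge 0}\scf(\mathcal{N}_n(\nu)),{\bf m},\blacktriangle)$ is a graded bialgebra, and since it is connected, the standard fact~\cite[Proposition 1.4.14]{GR20} supplies a unique antipode.

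For part (b), the discussion above already shows that $\ch_\nu$ is a graded bialgebra isomorphism preserving unit and counit; because the antipode of a connected graded bialgebra is uniquely determined by the bialgebra structure, $\ch_\nu$ automatically intertwines the two antipodes, giving the Hopf algebra isomorphism. In this argument, the genuinely substantive content sits entirely inside Lemma~\ref{lem: categorification of QSym}, whose combinatorial computation of ${\bf m}_A(\dot\chi^I(\nu),\dot\chi^J(\nu))$ and $\blacktriangle_k(\dot\chi^I(\nu))$ matches the shuffle-based product and deconcatenation-based coproduct formulas for $L_\alpha$. Consequently, the only point requiring care in the present theorem is the bookkeeping of the degenerate degree-$0$ cases so that the units and counits align correctly, after which associativity, coassociativity, bialgebra compatibility, and existence of the antipode are all free.
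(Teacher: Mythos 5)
Your proposal is correct and follows essentially the same route as the paper: both rest entirely on Lemma~\ref{lem: categorification of QSym} to show $\ch_\nu$ intertwines $({\bf m},\blacktriangle)$ with the product and coproduct of $\QSym$, and then use connectedness of the graded bialgebra to obtain the antipode and hence the Hopf algebra structure and isomorphism. Your version merely spells out the unit/counit bookkeeping and the antipode-uniqueness argument that the paper leaves implicit.
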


\begin{proof}
By Lemma~\ref{lem: categorification of QSym}, $\ch_\nu$ is a bialgebra isomorphism.
As $\QSym$ is a connected graded Hopf algebra, this proves our assertions.
\end{proof}

\begin{remark}
It should be pointed out that even if $C_\nu$ is replaced by any finite group $G$ of order $q$, all results in this section are still valid.
In this case,  
$Q_S(\nu) = \bigoplus_{s \in S} G_{s}$, where $G_{s} =G$ for all $s \in S$. 
\end{remark}

\subsection{The superclass identifiers of $\bigoplus_{n\ge 0}\scf(\mathcal{N}_n(\nu))$}
\label{sec:The superclass identifiers}
The purpose of this subsection is to 
study the superclass identifiers of $\bigoplus_{n\ge 0}\scf(\mathcal{N}_n(\nu))$.
To be precise, for $I \subseteq [n-1]$ and $J \subseteq [m-1]$, 
we expand 
${\bf m}(\kappa_I(\nu), \kappa_J(\nu) )$ in the basis $\{\kappa_I(\nu): I \subseteq [m+n-1]\}$ for $\scf(\mathcal{N}_{m+n}(\nu))$  
and $\blacktriangle(\kappa_I(\nu))$ in the basis 
for $\bigoplus_{k\in [n-1]}\scf(\mathcal{N}_k(\nu)) \otimes \scf(\mathcal{N}_{n-k}(\nu))$ consisting of tensor products of the superclass identifiers.
This results obtained here will play a crucial role in Section~\ref{Section: new bases for NSym}.

\begin{lemma}\label{lem: vector notation of superclass identifier}
For $I \subseteq [n-1]$, let 
\[
\kappa_I(\nu)_i :=
\begin{cases}
\mathbbm{1} - \frac{1}{\nu} \mathbbm{reg} &\text{ if } i \in I,\\
\frac{1}{\nu} \mathbbm{reg} &\text{ if } i \in [n-1]\setminus I
\end{cases}
\]
for $i \in [n-1]$. Then $\lb ({\kappa_I(\nu)}_i)_{i \in [n-1]} \rb = \kappa_I(\nu)$.
\end{lemma}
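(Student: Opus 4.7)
The plan is to verify the claimed formula by evaluating the right-hand side pointwise on an arbitrary element of $Q_n(\nu)$ and showing it agrees with the indicator function $\kappa_I(\nu) = \kappa_{\cl_I(\nu)}$.

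First I would unpack the bracket notation using the convention introduced before Proposition~\ref{thm: supercharacter calculation}: for $\bm g = (g_i)_{i \in [n-1]} \in Q_n(\nu)$,
\[
\lb ({\kappa_I(\nu)}_i)_{i \in [n-1]} \rb(\bm g) = \prod_{i \in I}\left(\mathbbm{1} - \tfrac{1}{\nu}\mathbbm{reg}\right)(g_i) \cdot \prod_{i \in [n-1] \setminus I} \tfrac{1}{\nu}\mathbbm{reg}(g_i).
\]
The next step is a pointwise evaluation on $C_\nu$. Since $\mathbbm{reg}(g)$ equals $\nu$ if $g = 0$ and $0$ otherwise, we get $\tfrac{1}{\nu}\mathbbm{reg}(g) = \delta_{g,0}$, and hence $(\mathbbm{1} - \tfrac{1}{\nu}\mathbbm{reg})(g) = 1 - \delta_{g,0}$, which is the indicator of $g \neq 0$.

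With this observation in hand, the product above is $1$ precisely when $g_i \neq 0$ for every $i \in I$ and $g_i = 0$ for every $i \in [n-1] \setminus I$, and is $0$ otherwise. Recalling from the discussion following Remark~\ref{ambiguity of notation} that
\[
\cl_I(\nu) = \{(g_i)_{i \in [n-1]} \in Q_n(\nu) \mid g_i \neq 0 \text{ iff } i \in I\},
\]
this is exactly the indicator function of $\cl_I(\nu)$, i.e.\ $\kappa_I(\nu)(\bm g)$. Comparing values on every $\bm g \in Q_n(\nu)$ yields the desired equality.

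There is no substantive obstacle; the only thing to be careful about is matching the decomposition $C_\nu$-by-$C_\nu$ with the factor structure of the bracket map, which is immediate from its definition. The lemma is essentially a reformulation of the elementary identities $\tfrac{1}{\nu}\mathbbm{reg} = \delta_0$ and $\mathbbm{1} - \tfrac{1}{\nu}\mathbbm{reg} = 1 - \delta_0$ on $C_\nu$, packaged into the tensor notation used throughout the section.
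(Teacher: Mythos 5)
Your argument is correct and is essentially the paper's proof: the paper likewise reduces the claim to the single observation that $\tfrac{1}{\nu}\mathbbm{reg}$ is the indicator of $0$ on $C_\nu$, so that the bracketed product becomes the indicator of $\cl_I(\nu)$, i.e.\ $\kappa_I(\nu)$. You have merely written out the pointwise verification that the paper leaves implicit, which is fine.
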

\begin{proof}
The assertion follows from 
\begin{align*}
\frac{1}{\nu}\mathbbm{reg}(g)  =
\begin{cases}
1 &\text{ if } g = 0,\\
0 &\text{ otherwise }.
\end{cases}
\end{align*}
\end{proof}

Let us first introduce the expansion of  
${\bf m}(\kappa_I(\nu), \kappa_J(\nu) )$ in the basis 
consisting of the superclass identifiers.

\begin{proposition}\label{thm: product formula of superclass identifier}
Let $I \subseteq [m-1], J \subseteq [n-1]$. Then 
\[
{\bf m}(\kappa_I(\nu), \kappa_J(\nu) ) = \sum_{K \subseteq [m+n-1]} d_K\,\kappa_K(\nu),
\]
where 
\[d_K=\sum_{\substack{A \in \binom{[m+n]}{n}:\\ (I \#_A J) \cap c(A) = \emptyset \\ I \#_A J \subseteq K \subseteq (I \#_A J ) \cup c(A) }} \left(\dfrac{1}{1-\nu}\right)^{|K \cap c_2(A)|} .\]
\end{proposition}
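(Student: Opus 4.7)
The plan is to follow the vector-notation strategy used in the proof of Lemma~\ref{thm: categorification of QSym 0}(a), but now applied to the superclass identifier basis. Using Lemma~\ref{lem: vector notation of superclass identifier}, we write $\kappa_I(\nu) = \lb (\kappa_I(\nu)_i)_{i \in [m-1]} \rb$ and $\kappa_J(\nu) = \lb (\kappa_J(\nu)_j)_{j \in [n-1]} \rb$, where each component is either $\mathbbm{ne} := \mathbbm{1} - \tfrac{1}{\nu}\mathbbm{reg}$ (at positions in $I$, resp.\ $J$) or $\mathbbm{e} := \tfrac{1}{\nu}\mathbbm{reg}$ (elsewhere). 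Then for each $A \in \binom{[m+n]}{n}$, I would compute ${\bf m}_A(\kappa_I(\nu), \kappa_J(\nu))$ step by step through Definition~\ref{def: product of scf}(a): first build ${\bf s}_A(\kappa_I(\nu), \kappa_J(\nu))$ as a product of single-position characters on $Q_{[m+n]}(\nu)$; next restrict to $Q_{[m+n-1]\setminus c(A)}(\nu)$; then tensor with $\dot\chi^{c_1(A)}(\nu) \in \cf(Q_{c(A)}(\nu))$.

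The key observation is that the restriction evaluates the characters at positions in $c(A) \cup \{m+n\}$ at the identity $0 \in C_\nu$, and among the characters occurring one has $\mathbbm{ne}(0) = 0$ while $\mathbbm{1}(0) = \mathbbm{e}(0) = \tfrac{\mathbbm{reg}-\mathbbm{1}}{\nu-1}(0) = 1$. Tracing through the $\iota_{A^c}^\ast$ and $\iota_A^\ast$ identifications, a position $p \in c(A)$ receives the character $\mathbbm{ne}$ precisely when $p \in I \#_A J$ (accounting for both $p \in c_1(A) \cap A_{J'}$ and $p \in c_2(A) \cap (A^c)_{I'}$). Hence ${\bf m}_A(\kappa_I(\nu), \kappa_J(\nu)) = 0$ unless $(I \#_A J) \cap c(A) = \emptyset$, matching the indexing condition of the claim. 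When this holds, after restricting and tensoring with $\dot\chi^{c_1(A)}(\nu)$, one obtains $\lb (f_p)_{p \in [m+n-1]} \rb$ where $f_p = \mathbbm{ne}$ for $p \in I \#_A J$, $f_p = \mathbbm{1}$ for $p \in c_1(A)$, $f_p = \tfrac{\mathbbm{reg}-\mathbbm{1}}{\nu-1}$ for $p \in c_2(A)$, and $f_p = \mathbbm{e}$ otherwise.

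Finally, to rewrite this in the $\kappa$-basis, I would expand each factor in $\{\mathbbm{e}, \mathbbm{ne}\}$ via $\mathbbm{1} = \mathbbm{e} + \mathbbm{ne}$ and $\tfrac{\mathbbm{reg}-\mathbbm{1}}{\nu-1} = \mathbbm{e} - \tfrac{1}{\nu-1}\mathbbm{ne}$, then distribute. The $\kappa_K(\nu)$-coefficient is nonzero exactly when $I \#_A J \subseteq K \subseteq (I \#_A J) \cup c(A)$: the first inclusion is forced by the $\mathbbm{ne}$-factors at $p \in I \#_A J$, and the second by the $\mathbbm{e}$-factors at positions outside $c(A) \cup (I \#_A J)$. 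For such $K$, the coefficient equals $\left(\tfrac{1}{1-\nu}\right)^{|K \cap c_2(A)|}$, since only the positions $p \in c_2(A) \cap K$ contribute a nontrivial factor of $-\tfrac{1}{\nu-1}$ (each $p \in c_1(A)$ and each $p \in c_2(A) \setminus K$ contributes $1$). Summing over $A$ and collecting by $K$ yields the proposition. The main bookkeeping obstacle is tracking the ``augmented'' positions (namely $\max A$, $\max A^c$, and $m+n$), where the factor $\tfrac{\mathbbm{reg}-\mathbbm{1}}{\nu-1}$ introduced by $\otimes_1$ appears, and verifying that these are absorbed harmlessly either into $c(A)$ via $\dot\chi^{c_1(A)}(\nu)$ or restricted away at position $m+n$.
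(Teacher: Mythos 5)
Your proposal is correct and follows essentially the same route as the paper's proof: vector notation for $\kappa_I(\nu)$, tracking which positions of ${\bf s}_A(\kappa_I(\nu),\kappa_J(\nu))$ carry $\mathbbm{1}-\tfrac{1}{\nu}\mathbbm{reg}$, observing that the restriction vanishes unless $(I\#_A J)\cap c(A)=\emptyset$, and then expanding $\mathbbm{1}$ and $\tfrac{\mathbbm{reg}-\mathbbm{1}}{\nu-1}$ in terms of $\tfrac{1}{\nu}\mathbbm{reg}$ and $\mathbbm{1}-\tfrac{1}{\nu}\mathbbm{reg}$ to read off the $\kappa_K(\nu)$-coefficients. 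The only cosmetic difference is that you evaluate the restriction directly at the identity (using $\bigl(\mathbbm{1}-\tfrac{1}{\nu}\mathbbm{reg}\bigr)(0)=0$), whereas the paper first decomposes the component at $z_A$ and invokes its restriction formula for superclass identifiers; the computation and conclusion are the same.
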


\begin{proof}
Observe the following equalities:
\begin{align}\label{eq: 1 decomposition}
    \mathbbm{1} = \left(\mathbbm{1} - \frac{1}{\nu} \mathbbm{reg}\right) + \frac{1}{\nu} \mathbbm{reg},
\end{align}
\begin{align}\label{eq: reg-1/reg-1 decomposition}
    \frac{\mathbbm{reg} - \mathbbm{1}}{\nu-1} = \left(\frac{-1}{\nu-1}\right)  \left(\mathbbm{1} - \frac{1}{\nu} \mathbbm{reg}\right) + \frac{1}{\nu} \mathbbm{reg}.
\end{align}
For $A \subseteq [m+n]$, 
set 
\[
z_A:= 
\begin{cases}
\max A & \text{ if } m+n \notin A,\\
\max \,\,[m+n]\setminus A & \text{ if } m+n \in A.
\end{cases}
\]
If we let
\[
\phi_i : = 
\begin{cases}
\mathbbm{1} - \frac{1}{\nu} \mathbbm{reg} &\text{ if } i \in (I \#_A J)', \\
\frac{\mathbbm{reg} - \mathbbm{1}}{\nu-1} &\text{ if } i = z_A \text{ or } i= m+n,\\
\frac{1}{\nu} \mathbbm{reg} &\text{ otherwise,}
\end{cases}
\]
then 
\[
\lb (\phi_i)_{i \in [m+n]} \rb =
{\bf s}_A(\kappa_I(\nu), \kappa_J(\nu)).
\]
Applying~\eqref{eq: reg-1/reg-1 decomposition} 
to the $\phi_{z_A}$ in this vector notation and then 
using Lemma~\ref{lem: vector notation of superclass identifier}, 
one can simplify 
\begin{equation*}
({\bf s}_A(\kappa_I(\nu), \kappa_J(\nu))) \big\downarrow^{Q_{m+n+1}(\nu)}_{Q_{[m+n-1] \setminus c(A)}(\nu)}
\end{equation*}
in the following simple form:
\begin{equation}\label{tensor product for multiplication of identifier}
\left( \left(\frac{-1}{\nu-1}\right)   \kappa_{(I \#_A J) \sqcup \{z_A\}}(\nu) + \kappa_{I \#_A J}(\nu) \right) \Big\downarrow^{Q_{m+n+1}(\nu)}_{Q_{[m+n-1] \setminus c(A)}(\nu)}
\end{equation}
On the other hand, one can easily see that for any subsets $S,T \subseteq [n-1]$, 
\begin{align}\label{eq: restriction of superclass identifier}
\kappa_S(\nu) \big\downarrow^{Q_n(\nu)}_{Q_T(\nu)} = 
\begin{cases}
\kappa_{S}(\nu) &\text{ if } S \subseteq T,\\
0 &\text{ otherwise }.
\end{cases}
\end{align}
Since $z_A \in c(A)$, from \eqref{eq: restriction of superclass identifier} it follows that 
\[
\eqref{tensor product for multiplication of identifier}=
\begin{cases}
\kappa_{I \#_A J}(\nu) &\text{ if } I \#_A J \subseteq [m+n-1] \setminus c(A),\\
0 &\text{ otherwise }.
\end{cases}
\]
Finally, using~\eqref{eq: 1 decomposition} and~\eqref{eq: reg-1/reg-1 decomposition}, we derive that 
\begin{align*}
&{\bf m}_A(\kappa_I(\nu), \kappa_J(\nu) )\\ 
&=
\begin{cases}
\dot\chi^{c_1(A)}(\nu) \otimes_{c(A)} \kappa_{I \#_A J}(\nu) &\text{ if } I \#_A J \subseteq [m+n-1] \setminus c(A),\\
0 &\text{ otherwise }
\end{cases}\\
&=
\begin{cases}
\sum_{I \#_A J \subseteq K \subseteq (I \#_A J) \sqcup c(A) } \left(\dfrac{1}{1-\nu}\right)^{|K \cap c_2(A)|}  \kappa_{K}(\nu) &\text{ if } I \#_A J \subseteq [m+n-1] \setminus c(A),\\
0 &\text{ otherwise }.
\end{cases}
\end{align*}
This completes the proof.
\end{proof}

\begin{example}
Let $m = 2, n = 3$ and $I = \{1\}, J = \{2\}$.
The following table shows all the statistics required to calculate 
${\bf m}(\kappa_I(\nu), \kappa_J(\nu))$.

\newcolumntype{C}[1]{>{\centering\arraybackslash}p{#1}}

\makeatletter
\def\hlinewd#1{%
\noalign{\ifnum0=`}\fi\hrule \@height #1 \futurelet
\reserved@a\@xhline}
\makeatother

\newcolumntype{?}{!{\vrule width 1pt}}


\begin{table}[ht]
\small{
\begin{tabular}
{|C{1.5cm}?C{1.5cm}|C{1.5cm}|C{1.5cm}|C{1.5cm}|C{2.4cm}|C{2.4cm}|}
\hline
& & & & & & \\[-1em]
 \textbf{$A$}  & \textbf{$c_1(A)$}       & \textbf{$c_2(A)$}        &  \textbf{$c(A)$}  &  \textbf{$I \#_A J$}  &  \textbf{$(I \#_A J) \cap c(A)$}  &  \textbf{$(I \#_A J) \cup c(A)$}  \\[-1em]
& & & & & &  \\ \hlinewd{1pt}
 \{1,2,3\}   & \{3\}                    & $\emptyset$ & \{3\}           &  \{2,4\}              & $\emptyset$                                           & \{2,3,4\}                      \\ \hline
\{1,2,4\}    & \{2,4\}                 & \{3\}                    & \{2,3,4\}       & \{2,3\}             & $\{2,3\}$                                           & unnecessary                                \\ \hline
\{1,2,5\}    & \{2\}                    & \{4\}                    & \{2,4\}         & \{2,3\}             & $\{2\}$                                           & unnecessary                                \\ \hline
\{1,3,4\}    & \{1,4\}                  & \{2\}                    & \{1,2,4\}       & \{2,3\}             & $\{2\}$                                           & unnecessary                                \\ \hline
\{1,3,5\}    & \{1,3\}                  & \{4\}                    & \{1,3,4\}       & \{2,3\}             & $\{3\}$                                           &  unnecessary                               \\ \hline
\{1,4,5\}    & \{1\}                    & \{3\}                    & \{1,3\}         & \{2,4\}             & $\emptyset$                                           & \{1,2,3,4\}                     \\ \hline
\{2,3,4\}    & \{4\}                    & \{1\}                    & \{1,4\}         & \{1,3\}             & $\{1\}$                                           & unnecessary                                \\ \hline
\{2,3,5\}    & \{3\}                    & \{1,4\}                  & \{1,3,4\}       & \{1,3\}             & $\{1,3\}$                                           &  unnecessary                               \\ \hline
\{2,4,5\}    & \{2\}                    & \{1,3\}                  & \{1,2,3\}       & \{1,4\}             & $\{1\}$                                           & unnecessary                                \\ \hline
\{3,4,5\}    & $\emptyset$ & \{2\}                    & \{2\}           & \{1,4\}             & $\emptyset$                                           & \{1,2,4\}                       \\ \hline
\end{tabular}}
\end{table}

\noindent
From this, we have
\begin{align*}
{\bf m}(\kappa_I(\nu), \kappa_J(\nu) ) &= \left(\dfrac{1}{1-\nu}\right)^{0}  \kappa_{\{1,4 \}}(\nu) + \left\{\left(\dfrac{1}{1-\nu}\right)^{0}+\left(\dfrac{1}{1-\nu}\right)^{0}\right\}  \kappa_{\{2,4 \}}(\nu)\\
&+ \left\{\left(\dfrac{1}{1-\nu}\right)^{0}+\left(\dfrac{1}{1-\nu}\right)^{1}\right\}  \kappa_{\{1,2,4 \}}(\nu)\\
&+ \left\{\left(\dfrac{1}{1-\nu}\right)^{0}+\left(\dfrac{1}{1-\nu}\right)^{1}\right\}  \kappa_{\{2,3,4 \}}(\nu)
+\left(\dfrac{1}{1-\nu}\right)^{1}  \kappa_{\{1,2,3,4 \}}(\nu)\\
&= \kappa_{\{1,4 \}}(\nu) + 2 \kappa_{\{1,4 \}}(\nu) + \left(\dfrac{2-\nu}{1-\nu}\right)  \kappa_{\{1,2,4 \}}(\nu) + \left(\dfrac{2-\nu}{1-\nu}\right)  \kappa_{\{1,3,4 \}}(\nu)\\
&+ \left(\dfrac{1}{1-\nu}\right)  \kappa_{\{1,2,3,4 \}}(\nu).
\end{align*}

\end{example}

For two compositions $\alpha = (\alpha_1, \alpha_2, \ldots, \alpha_l)$ and $\beta = (\beta_1, \beta_2, \ldots, \beta_k )$, the \emph{near-concatenation} $\alpha \odot \beta$ of $\alpha$ and $\beta$ is defined by the composition  
\[
\alpha \odot \beta := (\alpha_1, \alpha_2, \ldots, \alpha_l + \beta_1, \beta_2, \ldots, \beta_k).
\]
The next result concerns the expansion of $\blacktriangle(\kappa_I(\nu))$.
\begin{proposition}\label{thm: coproduct formula of superclass identifier}
Let $n$ be a nonnegative integer.
For $\gamma \in \Comp_n$, we have
\[
\blacktriangle(\kappa_{\set(\gamma)}(\nu)) = \sum_{\alpha \odot \beta = \gamma} \kappa_{\set(\alpha)}(\nu) \otimes \kappa_{\set(\beta)}(\nu).
\]
\end{proposition}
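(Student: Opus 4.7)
The plan is to compute each $\blacktriangle_k(\kappa_I(\nu))$ (where $I := \set(\gamma)$) via Definition~\ref{def: coproduct of scf}, using Lemma~\ref{lem: vector notation of superclass identifier} to organize the bookkeeping, and then translate the resulting index-by-$k$ sum into a sum over near-concatenation factorizations of $\gamma$ via an explicit bijection.

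First, I would use Lemma~\ref{lem: vector notation of superclass identifier} to write $\kappa_I(\nu)$ in the vector notation introduced after Proposition~\ref{thm: supercharacter calculation}: its factor at position $k$ is $\mathbbm{1} - \tfrac{1}{\nu}\mathbbm{reg}$ when $k \in I$ and $\tfrac{1}{\nu}\mathbbm{reg}$ when $k \notin I$. Since $(\mathbbm{1} - \tfrac{1}{\nu}\mathbbm{reg})(0) = 0$ whereas $(\tfrac{1}{\nu}\mathbbm{reg})(0) = 1$, equation~\eqref{eq: restriction of superclass identifier} shows that
\[
\kappa_I(\nu) \big\downarrow^{Q_n(\nu)}_{Q_{[1,k-1]\sqcup[k+1,n-1]}(\nu)}
\]
vanishes precisely when $k \in I$. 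Consequently $\blacktriangle_k(\kappa_I(\nu)) = 0$ for $k \in I$, while for $1 \le k \le n-1$ with $k \notin I$ the restriction factors as $\kappa_{I \cap [k-1]}(\nu) \otimes_{[1,k-1]} \kappa_{I \cap [k+1,n-1]}(\nu)$; applying $(\iota^\ast_{[k+1,n-1]})^{-1}$ to the second tensorand and invoking Definition~\ref{def: coproduct of scf} gives
\[
\blacktriangle_k(\kappa_I(\nu)) = \kappa_{I \cap [k-1]}(\nu) \otimes \kappa_{(I \cap [k+1,n-1]) - k}(\nu).
\]
The boundary cases $k=0$ and $k=n$ follow directly from~\eqref{eq: def of kth coproduct} and fit into the same formula, since $I \cap [-1] = \emptyset$ and $I \cap [n+1,n-1] = \emptyset$. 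Summing yields
\[
\blacktriangle(\kappa_I(\nu)) = \sum_{k \in \{0,1,\ldots,n\}\setminus I} \kappa_{I \cap [k-1]}(\nu) \otimes \kappa_{(I \cap [k+1,n-1]) - k}(\nu).
\]

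To conclude, I would exhibit a bijection $\Psi: \{0,1,\ldots,n\} \setminus I \to \{(\alpha,\beta) : \alpha \odot \beta = \gamma\}$ sending $k \mapsto (\comp(I \cap [k-1]),\, \comp((I \cap [k+1,n-1])-k)) \in \Comp_k \times \Comp_{n-k}$, with the conventions $() \odot \beta = \beta$ and $\alpha \odot () = \alpha$ covering $k = 0$ and $k = n$. Writing $s_i := \gamma_1 + \cdots + \gamma_i$ for the partial sums of $\gamma$, the condition $k \notin I$ is equivalent to $s_{i-1} \le k \le s_i$ for a unique $i$, and in that range a direct computation produces $\alpha = (\gamma_1,\ldots,\gamma_{i-1}, k - s_{i-1})$ and $\beta = (s_i - k, \gamma_{i+1}, \ldots, \gamma_l)$, so $\alpha \odot \beta = \gamma$. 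Conversely, a factorization $\alpha \odot \beta = \gamma$ determines $k := |\alpha|$; since either one of $\alpha,\beta$ is empty or both $\alpha_{\ell(\alpha)}$ and $\beta_1$ are strictly positive, $k$ is never a partial sum of $\gamma$, so $k \notin I$. This bijection rewrites the displayed sum as $\sum_{\alpha \odot \beta = \gamma} \kappa_{\set(\alpha)}(\nu) \otimes \kappa_{\set(\beta)}(\nu)$, as desired.

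The main obstacle is the vector-notation bookkeeping in the first step — carefully tracking how the factors $\mathbbm{1} - \tfrac{1}{\nu}\mathbbm{reg}$ and $\tfrac{1}{\nu}\mathbbm{reg}$ behave under restriction and standardization to recover clean $\kappa$-expressions. Once this is in place the combinatorial bijection in the second step is short and transparent.
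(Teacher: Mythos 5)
Your proof is correct, and its computational core coincides with the paper's: both arguments reduce everything to the observation that restricting $\kappa_{\set(\gamma)}(\nu)$ along the $k$th coordinate kills it exactly when $k \in \set(\gamma)$ (because the factor $\mathbbm{1}-\tfrac{1}{\nu}\mathbbm{reg}$ vanishes at $0$ while $\tfrac1\nu\mathbbm{reg}$ takes value $1$), which is precisely the paper's \eqref{eq: restriction of superclass identifier} and yields the same formula \eqref{eq: kappa coproduct} for $\blacktriangle_k(\kappa_I(\nu))$ that you derive, including the boundary cases $k=0,n$. Where you genuinely diverge is the last step, identifying the surviving indices $k \in \{0,\ldots,n\}\setminus\set(\gamma)$ with near-concatenation factorizations $\alpha\odot\beta=\gamma$. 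The paper gets this by a detour through the fundamental basis: it applies the known coproduct $\triangle L_\gamma$, Theorem~\ref{thm: categorification of QSym} and Lemma~\ref{thm: categorification of QSym 0}~(b) to see that the pairs $(\comp(I\cap[k-1]),\comp((I\cap[k+1,n-1])-k))$ exhaust all concatenation-or-near-concatenation factorizations, and then invokes the dichotomy \eqref{eq: concatenation, near concatenation in set} ($k\in I$ gives concatenation, $k\notin I$ gives near-concatenation) to isolate the set $B$. You instead prove the relevant half of \eqref{eq: concatenation, near concatenation in set} directly, via the explicit partial-sum bijection $k \leftrightarrow (\alpha,\beta)$ with $k=|\alpha|$, $\alpha=(\gamma_1,\ldots,\gamma_{i-1},k-s_{i-1})$, $\beta=(s_i-k,\gamma_{i+1},\ldots,\gamma_l)$. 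Your route is more elementary and self-contained (no reliance on the categorification theorem or on $\triangle L_\gamma$), while the paper's route reuses machinery already established and makes the concatenation/near-concatenation dichotomy explicit in a form it needs again later (e.g.\ in the proof of Theorem~\ref{thm: str consts for hat B}~(a)). One cosmetic remark: in your converse direction, when one of $\alpha,\beta$ is empty the resulting $k$ equals $0$ or $n$, which are (trivial) partial sums of $\gamma$; the correct statement, which is what your argument actually uses, is simply that $k\notin\set(\gamma)\subseteq[n-1]$.
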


\begin{proof}
Let 
$A := \{(\alpha, \beta) \mid \alpha \cdot \beta = \gamma \}$ and 
$B := \{(\alpha, \beta) \mid \alpha \odot \beta = \gamma \}$.
Then
$A \cap B = \{ (\emptyset, \gamma),(\gamma, \emptyset)  \}$,
where $\emptyset$ denotes the empty composition in $\Comp_0$.
It is well known that 
\[
\triangle(L_{\gamma}) = \sum_{\alpha \cdot \beta = \gamma \text{ or } \alpha \odot \beta = \gamma} L_{\alpha} \otimes L_{\beta}
\]
(see~\cite[Proposition 5.2.15]{GR20}).
Therefore, by Theorem~\ref{thm: categorification of QSym}, we have
\begin{equation}\label{coproduct of chi(nu)}
\begin{aligned}
\blacktriangle(\dot\chi^{\set(\gamma)}(\nu)) 
&= \sum_{\alpha \cdot \beta = \gamma \text{ or } \alpha \odot \beta = \gamma} \dot\chi^{\set(\alpha)}(\nu) \otimes \dot\chi^{\set(\beta)}(\nu)\\
&= \sum_{(\alpha, \beta) \in A \cup B} \dot\chi^{\set(\alpha)}(\nu) \otimes \dot\chi^{\set(\beta)}(\nu).
\end{aligned}
\end{equation}
For simplicity, set $I := \set(\gamma)$.
Combining Lemma~\ref{thm: categorification of QSym 0} (b) with~\eqref{coproduct of chi(nu)}, it can be easily seen that $A \cup B$ is equal to  
\[
\{( \comp(I \cap [1,k-1]), \comp((I \cap [k+1, n-1]) - k) ) \mid 1 \le k \le n-1 \} \sqcup \{ (\emptyset, \gamma),(\gamma, \emptyset)  \}.
\]
For $1 \le k \le n-1$, it holds that  
\begin{equation}\label{eq: concatenation, near concatenation in set}
    \comp(I)
    =\begin{cases}
    \comp(I \cap [1,k-1]) \odot \comp((I \cap [k+1, n-1]) - k) & \text{if }k \notin I,\\
    \comp(I \cap [1,k-1]) \cdot \comp((I \cap [k+1, n-1]) - k) & \text{if }k \in I,
    \end{cases}
\end{equation}
and which implies 
\[
B = \{( \comp(I \cap [1,k-1]), \comp((I \cap [k+1, n-1]) - k) ) \mid k \notin I \} \sqcup \{ (\emptyset, \gamma),(\gamma, \emptyset)  \}.
\]
On the other hand, by~\eqref{eq: restriction of superclass identifier}, 
we have that 
\begin{equation}\label{eq: kappa coproduct}
\blacktriangle_k(\kappa_I(\nu)) = 
\begin{cases}
\kappa_{I \cap [1,k-1]}(\nu) \otimes (\iota^\ast_{[k+1,n-1]})^{-1} (\kappa_{I \cap [k+1, n-1]}(\nu)) &\text{ if } k \notin I,\\
0 &\text{ if } k \in I
\end{cases}
\end{equation}
for $1 \le k \le n-1$.
If $k=0$ or $n$, then
$\blacktriangle_0(\phi)
=\mathbbm{1}_0 \otimes \phi \text{ and } \blacktriangle_n(\phi)
=\phi \otimes \mathbbm{1}_0,
$
so 
\begin{align*}
&\blacktriangle_0(\kappa_{\set(\gamma)}(\nu))
=\kappa_{\set(\emptyset)}(\nu) \otimes \kappa_{\set(\gamma)}(\nu) \text{ and }\\ &\blacktriangle_n(\kappa_{\set(\gamma)}(\nu))
=\kappa_{\set(\gamma)}(\nu) \otimes \kappa_{\set(\emptyset)}(\nu).
\end{align*}
Putting these together, we conclude that 
\[
\blacktriangle(\kappa_I(\nu)) = \sum_{k=0}^{n}\blacktriangle_k(\kappa_I(\nu)) = \sum_{(\alpha, \beta) \in B} \kappa_{\set(\alpha)}(\nu) \otimes \kappa_{\set(\beta)}(\nu).
\]
\end{proof}

\begin{example}
Let $\gamma = (1,3,2) = \comp(\{1,4\})$. 
All the possible ways to write $\gamma$ as $\alpha \odot \beta$ are 
\[\emptyset \odot (1,3,2) = (1,1) \odot (2,2) = (1,2) \odot (1,2) = (1,3,1) \odot (1) = (1,3,2) \odot \emptyset.\]
Therefore 
$\blacktriangle(\kappa_{\{1,4\}})$ is equal to 
 \begin{align*}
  \mathbbm{1}_0 \otimes \kappa_{\{1,4\}}(\nu) +
    \kappa_{\{1\}}(\nu) \otimes \kappa_{\{2\}}(\nu) 
    +\kappa_{\{1\}}(\nu) \otimes \kappa_{\{2\}}(\nu) 
  +\kappa_{\{1,4\}}(\nu) \otimes \mathbbm{1}_1 
    +\kappa_{\{1,4\}}(\nu) \otimes \mathbbm{1}_0,
\end{align*}
where 
\begin{align*}
    &\mathbbm{1}_0 \otimes \kappa_{\{1,4\}}(\nu) \in  \scf(\mathcal{N}_0(\nu)) \otimes \scf(\mathcal{N}_6(\nu)), \quad 
    \kappa_{\{1\}}(\nu) \otimes \kappa_{\{2\}}(\nu) \in  \scf(\mathcal{N}_2(\nu)) \otimes \scf(\mathcal{N}_4(\nu)),\\
    &\kappa_{\{1\}}(\nu) \otimes \kappa_{\{2\}}(\nu) \in \scf(\mathcal{N}_3(\nu)) \otimes \scf(\mathcal{N}_3(\nu)), \quad 
    \kappa_{\{1,4\}}(\nu) \otimes \mathbbm{1}_1 \in  \scf(\mathcal{N}_5(\nu)) \otimes \scf(\mathcal{N}_1(\nu)),\\
    &\kappa_{\{1,4\}}(\nu) \otimes \mathbbm{1}_0 \in  \scf(\mathcal{N}_6(\nu)) \otimes \scf(\mathcal{N}_0(\nu)).
\end{align*}

\end{example}

\section{A new basis for $\NSym_{\mathbb{C}(q,t)}$ and the structure constants}\label{Section: new bases for NSym}
Although defined over the base field $\mathbb C$.
the Hopf algebras in Section~\ref{Hopf algebras in consideration}
can be defined over any nonzero field (see \cite{GR20}).
The main object of the present section is $\NSym_{\mathbb{C}(q,t)}$, the Hopf algebra of noncommutative symmetric functions defined over $\mathbb{C}(\mathfrak q,\mathfrak t)$,
where $q$ and $t$ are commuting variables.
We introduce and investigate a new basis $\{\mathcal{B}(q,t)_{\alpha}\mid \alpha \in \Comp\}$ for $\NSym_{\mathbb{C}(q,t)}$.
We also consider its variant $\{\mathcal{\widehat{B}}(q,t)_{\alpha}\mid \alpha \in \Comp\}$ obtained by re-parametrizing the indices.
Particular emphasis will be placed on the structure constants for these bases and Theorem~\ref{thm: categorification of QSym} will play an important role in performing this.
As before, $n$ is assumed to be any nonnegative integer 
throughout this section.

\subsection{A new basis $\{\mathcal{B}(q,t)_{\alpha}\mid \alpha \in \Comp\}$ for $\NSym_{\mathbb C(q,t)}$}\label{Subsection: qt basis B for NSym}

We begin with the definition of $\mathcal{B}(q,t)_{\alpha}$.

\begin{definition}\label{defn of B(q,t)}
For $I \subseteq [n-1]$, define
\[
\mathcal{B}(q,t)_{\comp(I)} := \sum_{J: I \cup J = [n-1]} q^{|I \setminus J|} \  t^{|I \cap J|} \  H_{\comp(J)} \in \NSym_{\mathbb{C}(q,t)}.
\]
\end{definition}

Set 
\[\alpha^c:=\comp([n-1]\setminus \set(\alpha))\]
for all $\alpha \in \Comp_n$.
Then, choose a linear extension $\preceq^c_{{\rm lin.ext.}}$ of the partial order $\preceq^c$ on $\{\mathcal{B}(q,t)_{\alpha} | \alpha\in \Comp_n \}$, where 
$$\mathcal{B}(q,t)_{\alpha} \preceq^c \mathcal{B}(q,t)_{\beta} \text{ if and only if } \mathcal{B}(q,t)_{\alpha^c} \preceq \mathcal{B}(q,t)_{\beta}^c.$$
And, we also choose a linear extension $\preceq_{{\rm lin.ext.}}$ of the partial order $\preceq$ on $H_{\alpha}$. 
Since $I \cup J = [n-1]$ if and only if $I^c \subseteq J$ if and only if $\comp(J) \preceq \comp(I)^c$,
the transition matrix from the ordered basis $(\{\mathcal{B}(q,t)_{\alpha}\}, \preceq^c_{{\rm lin.ext.}})$ to the ordered basis $(\{H_{\alpha}\}, \preceq_{{\rm lin.ext.}})$ is triangular with non-zero diagonals, 
Therefore $\{ \mathcal{B}(q,t)_{\alpha} \mid \alpha \in \Comp \}$ is a basis for $\NSym_{\mathbb{C}(q,t)}$.
It is quite interesting to note that this basis recovers 
well known bases for $\NSym$ such as $\{\Lambda_{\alpha}\}, \{H_{\alpha}\}, \{E^*_{\alpha}\}$ via suitable specializations of $q$ and $t$.

\begin{proposition}\label{prop: specializations of B(q,t)}
For each $\alpha \in \Comp$, we have
\begin{enumerate}[label = {\rm (\alph*)}]
    \item $\mathcal{B}(1,0)_{\alpha} = H_{\alpha^c}$,
    \item $\mathcal{B}(-1,1)_{\alpha} = \Lambda_{\alpha^c}$,
    \item $\mathcal{B}(1,-1)_{\alpha} = E^*_{\alpha^c}$.
\end{enumerate}
\end{proposition}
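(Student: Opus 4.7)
The entire statement is a direct computation from Definition~\ref{defn of B(q,t)} together with the defining formulas of $H_{\alpha^c}$, $\Lambda_{\alpha^c}$, $E^*_{\alpha^c}$. The first step is to reparametrize the sum defining $\mathcal{B}(q,t)_{\comp(I)}$ in a form that matches the refinement order $\preceq$. Writing $\alpha = \comp(I)$ and $\alpha^c = \comp(I^c)$, the condition $I \cup J = [n-1]$ is equivalent to $I^c \subseteq J$, which under the bijection $J \mapsto \comp(J)$ translates precisely to $\comp(J) \preceq \alpha^c$. Thus all three evaluations are sums over $\{\comp(J) \mid \comp(J) \preceq \alpha^c\}$, which is the common indexing set of the three target bases.

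The second step is to convert the exponents $|I \setminus J|$ and $|I \cap J|$ into statistics involving $\ell(\comp(J))$ and $\ell(\alpha^c)$. Using $\ell(\comp(J)) = |J|+1$, $\ell(\alpha^c)=n-|I|$ and the inclusion-exclusion identity $|I \cup J| = |I|+|J|-|I\cap J| = n-1$, one immediately gets the two identities
\begin{align*}
|I\setminus J| &= n - \ell(\comp(J)), \\
|I\cap J| &= \ell(\comp(J)) - \ell(\alpha^c),
\end{align*}
valid for every $J$ with $I\cup J=[n-1]$.

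With these in hand, each of the three cases is immediate. For (a), specializing $q=1, t=0$ forces $|I\cap J|=0$ in the definition, so only $J=I^c$ survives and $\mathcal{B}(1,0)_{\alpha}= H_{\comp(I^c)} = H_{\alpha^c}$. For (b), specializing $q=-1, t=1$ yields $\sum_{\comp(J)\preceq\alpha^c} (-1)^{|I\setminus J|} H_{\comp(J)}$; substituting the first identity above converts this into $\sum_{\beta\preceq\alpha^c}(-1)^{n-\ell(\beta)}H_{\beta} = \Lambda_{\alpha^c}$. For (c), specializing $q=1,t=-1$ yields $\sum_{\comp(J)\preceq\alpha^c}(-1)^{|I\cap J|} H_{\comp(J)}$; substituting the second identity converts this into $\sum_{\beta\preceq\alpha^c}(-1)^{\ell(\beta)-\ell(\alpha^c)} H_{\beta} = E^*_{\alpha^c}$.

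There is no real obstacle here, only bookkeeping: the content of the argument is the two elementary identities relating $|I\setminus J|, |I\cap J|$ to the lengths $\ell(\comp(J))$ and $\ell(\alpha^c)$, together with the observation that the indexing set under complementation matches $\{\beta \preceq \alpha^c\}$. The only point where one should take some care is checking the boundary cases $n=0,1$, where $[n-1]$ is empty and the sum in Definition~\ref{defn of B(q,t)} reduces to a single term; in this range all three identities are trivially verified.
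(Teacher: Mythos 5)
Your proposal is correct and follows essentially the same route as the paper: both rest on the equivalence $I\cup J=[n-1]\iff I^c\subseteq J\iff \comp(J)\preceq\alpha^c$ and on converting $|I\setminus J|$ and $|I\cap J|$ into the length statistics $n-\ell(\comp(J))$ and $\ell(\comp(J))-\ell(\alpha^c)$, exactly the identities used in the paper's computation (there written as $|J^c|=|I\setminus J|$ and $|J|-|I^c|=|I\cap J|$). The only difference is direction of presentation (you specialize $\mathcal{B}(q,t)_\alpha$ and identify the result, while the paper expands $\Lambda_{\alpha^c}$ and $E^*_{\alpha^c}$ and matches them to the specialization), which is immaterial.
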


\begin{proof}
(a) The assertion follows from the identity 
$
1^{|I \setminus J|} \ 0^{|I \cap J|} = 
\begin{cases}
1 &\text{ if } I \cap J = \emptyset,\\
0 &\text{ otherwise. }
\end{cases}
$

(b) 
Recall that the noncommutative elementary symmetric function $\Lambda_{\alpha}$ is given by
$$
\Lambda_{\alpha} = \sum_{\beta \preceq \alpha} (-1)^{n-\ell(\beta)} H_{\beta}.
$$
Therefore the assertion follows from the calculation below: 
\begin{align*}
\Lambda_{\comp(I^c)} 
&= \sum_{I^c \subseteq J} (-1)^{n-(|J|+1)} H_{\comp(J)}\\
&= \sum_{I^c \subseteq J} (-1)^{|J^c|} H_{\comp(J)}\\
&= \sum_{J: I \cup J = [n-1]} (-1)^{|I \setminus J|} \  1^{|I \cap J|} \  H_{\comp(J)}.
\end{align*}

(c) 
Recall that the dual essential quasisymmetric function $E^*_{\alpha}$ is given by
$$
E^*_{\alpha} = \sum_{\beta \preceq \alpha} (-1)^{\ell(\beta)-\ell(\alpha)} H_{\beta}.
$$
Therefore, 
$$
E^*_{\comp(I^c)} = \sum_{I^c \subseteq J} (-1)^{(|J|+1)-(|I^c|+1)} H_{\comp(J)}
= \sum_{I^c \subseteq J} (-1)^{|J|-|I^c|} H_{\comp(J)}.
$$
This completes the proof 
since $I^c \subseteq J$ implies $|J|-|I^c| = |I \cap J|$.
\end{proof}

\subsection{The structure constants of $\NSym_{\mathbb C(q,t)}$ for  $\{\mathcal{B}(q,t)_{\alpha}\mid \alpha \in \Comp\}$}\label{subsection: Structure constants of N for the basis B}
As in Section~\ref{Section: categorification of QSym}, $\nu$ denotes any positive integer $>1$. 
For each $I \subseteq [n-1]$, let
\[
\Pi(\nu)_{\comp(I)} := \ch_\nu^n\left(\frac{\kappa_I(\nu)}{(\nu-1)^{|I|}}\right).
\]
Since $\{\kappa_I(\nu): I \subseteq [n-1]\}$ is a basis for
$\scf(\mathcal{N}_n(\nu))$ and $\ch_\nu^n$ is an isomorphism,  
$\{\Pi(\nu)_{\alpha}: \alpha\in \Comp_n\}$ is a basis for  $\QSym_n$.
The following lemma shows the changes of basis between   
$\{L_{\alpha}: \alpha\in \Comp_n\}$ and 
$\{\Pi(\nu)_{\alpha}: \alpha\in \Comp_n\}$.

\begin{lemma}\label{thm: transition between Pi and L} 
Let $I,J \subseteq [n-1]$. Then  
\begin{enumerate}[label = {\rm (\alph*)}]
\item
$
L_{\comp(I)} = \displaystyle \sum_{J} (-1)^{|J \setminus I|}  (\nu-1)^{|I \cap J|}  \Pi(\nu)_{\comp(J)} 
$ and 
\item
$
\Pi(\nu)_{\comp(J)} = \displaystyle\sum_{I} \dfrac{1}{\nu^{n-1}}  (-1)^{|J \setminus I|}  (\nu-1)^{|(I \cup J)^c|}  L_{\comp(I)}.
$
\end{enumerate}
\end{lemma}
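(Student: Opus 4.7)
The approach is to prove both change-of-basis formulas inside $\scf(\mathcal{N}_n(\nu))$ first, and then transport them to $\QSym_n$ via the isomorphism $\ch_\nu^n$. The key observation is that by Proposition~\ref{thm: supercharacter calculation}, $\dot\chi^I(\nu)$ equals the bracket of an $(n-1)$-fold tensor whose $i$th factor lies in $\{\mathbbm{1},\ (\mathbbm{reg}-\mathbbm{1})/(\nu-1)\}$ according to whether $i \in I$, while by Lemma~\ref{lem: vector notation of superclass identifier} $\kappa_J(\nu)$ admits an analogous product form with factors in $\{\mathbbm{1}-\frac{1}{\nu}\mathbbm{reg},\ \frac{1}{\nu}\mathbbm{reg}\}$. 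Hence the entire $(n-1)$-fold change of basis reduces to an elementary $2 \times 2$ change of basis at a single coordinate, the two relations being already recorded as~\eqref{eq: 1 decomposition} and~\eqref{eq: reg-1/reg-1 decomposition}, together with their inverses
\[
\mathbbm{1} - \tfrac{1}{\nu}\mathbbm{reg} = \tfrac{\nu-1}{\nu}\,\mathbbm{1} - \tfrac{\nu-1}{\nu}\cdot\tfrac{\mathbbm{reg}-\mathbbm{1}}{\nu-1}, \qquad \tfrac{1}{\nu}\mathbbm{reg} = \tfrac{1}{\nu}\,\mathbbm{1} + \tfrac{\nu-1}{\nu}\cdot\tfrac{\mathbbm{reg}-\mathbbm{1}}{\nu-1}.
\]

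For~(a), I will substitute~\eqref{eq: 1 decomposition} and~\eqref{eq: reg-1/reg-1 decomposition} into each of the $n-1$ tensor factors of $\dot\chi^I(\nu)$ and distribute. Grouping the expansion by the subset $J \subseteq [n-1]$ of coordinates where one selects the $(\mathbbm{1}-\frac{1}{\nu}\mathbbm{reg})$-summand recovers exactly $\kappa_J(\nu)$ in the bracket notation, and only the positions in $I^c \cap J = J \setminus I$ contribute a nontrivial factor of $-\frac{1}{\nu-1}$. This yields
\[
\dot\chi^I(\nu) = \sum_{J \subseteq [n-1]} \left(\frac{-1}{\nu-1}\right)^{|J \setminus I|} \kappa_J(\nu),
\]
and applying $\ch_\nu^n$ together with $\Pi(\nu)_{\comp(J)} = \ch_\nu^n(\kappa_J(\nu))/(\nu-1)^{|J|}$ and the identity $|J|-|J \setminus I| = |I \cap J|$ gives~(a).

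For~(b), I will run the same procedure in reverse: each coordinate of $\kappa_J(\nu)$ is expanded via the two inverse relations displayed above and distributed. Collecting by the subset $I \subseteq [n-1]$ of coordinates at which one picks $\mathbbm{1}$, a four-case count over the blocks $J \cap I$, $J \cap I^c$, $J^c \cap I$, $J^c \cap I^c$ produces the coefficient
\[
(-1)^{|J \setminus I|}\cdot\frac{(\nu-1)^{|J|+|(I \cup J)^c|}}{\nu^{n-1}}
\]
in front of $\dot\chi^I(\nu)$; here the power of $\nu$ collapses to $n-1$ precisely because these four cardinalities partition $[n-1]$. Dividing through by $(\nu-1)^{|J|}$ and applying $\ch_\nu^n$ then yields~(b).

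The main obstacle is purely combinatorial bookkeeping rather than anything conceptual: in~(b) one must correctly extract the sign from the factors $-\frac{\nu-1}{\nu}$ arising at positions of $J \cap I^c$, keep the exponents of $(\nu-1)$ and $\nu$ cleanly separated, and verify the partition identity $|J \cap I| + |J \cap I^c| + |J^c \cap I| + |J^c \cap I^c| = n-1$ that collapses the denominator to the uniform $\nu^{n-1}$ appearing in the statement.
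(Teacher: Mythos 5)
Your proposal is correct, but it inverts the change of basis by a different mechanism than the paper, most notably in part (b). For (a) the two arguments are close in substance: the coefficient of $\kappa_J(\nu)$ in $\dot\chi^I(\nu)$ is just the value of $\dot\chi^I(\nu)$ on the superclass $\cl_J(\nu)$, and whether you read this off from Proposition~\ref{thm: supercharacter calculation} (as the paper does) or expand each tensor factor via~\eqref{eq: 1 decomposition} and~\eqref{eq: reg-1/reg-1 decomposition} (as you do), you land on $\dot\chi^I(\nu)=\sum_J\bigl(\tfrac{-1}{\nu-1}\bigr)^{|J\setminus I|}\kappa_J(\nu)$ and hence on (a) after applying $\ch_\nu^n$. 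For (b), however, the paper does not invert anything by hand: it introduces the Hall inner product on $\cf(Q_n(\nu))$, computes $\langle\chi^I(\nu),\chi^I(\nu)\rangle=(\nu-1)^{|I^c|}$ and $\langle\kappa_I(\nu),\kappa_I(\nu)\rangle=\nu^{n-1}/(\nu-1)^{|I|}$, rescales both families to orthonormal bases, and then obtains the inverse expansion for free from the fact that the transition matrix between orthonormal bases is unitary (its inverse is its transpose). You instead exploit that the global transition matrix factors as a tensor product of $2\times 2$ matrices, one per coordinate, and invert each $2\times 2$ block explicitly via the displayed inverse relations; your four-case bookkeeping over $I\cap J$, $J\setminus I$, $I\setminus J$, $(I\cup J)^c$ is exactly right, the sign $(-1)^{|J\setminus I|}$ comes from the $-\tfrac{\nu-1}{\nu}$ factors at positions of $J\setminus I$, and the partition of $[n-1]$ into these four blocks does collapse the denominator to $\nu^{n-1}$, so after dividing by $(\nu-1)^{|J|}$ and applying $\ch_\nu^n$ you recover (b) verbatim. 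Your route is more elementary and self-contained (no inner product needed, and it makes transparent why the coefficients are products over coordinates), while the paper's route buys the structural fact that the supercharacters and superclass identifiers are orthogonal families under the Hall pairing, which lets it avoid any explicit inversion; both are complete proofs of the lemma.
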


\begin{proof}
For $I,J \subseteq [n-1]$, let $\chi^I_J(\nu)$ be the coefficient of $\kappa_J(\nu)$ in $\chi^I(\nu)$, that is, 
\begin{align}\label{eq: supercharacter to superclass identifier}
\chi^I(\nu) = \sum_{J} \chi^I_J(\nu) \kappa_J(\nu).
\end{align}
Proposition~\ref{thm: supercharacter calculation} implies that  
$\chi^I_J(\nu) = (-1)^{|J \setminus I|}  (\nu-1)^{|(I \cup J)^c|}.$  
Let $\langle \cdot, \cdot \rangle$ be the Hall-inner product on $\cf(Q_n(\nu))$, that is,
\[
\langle \phi, \psi \rangle = \frac{1}{|Q_n(\nu)|} \sum_{\bm{g} \in Q_n(\nu)} \phi(\bm{g}) \psi(\bm{g}^{-1}) \quad (\phi, \psi \in \cf(Q_n(\nu))).
\]
By Proposition~\ref{thm: supercharacter calculation} and Lemma~\ref{lem: vector notation of superclass identifier}, we see that  
\begin{align*}
\langle \chi^I(\nu), \chi^I(\nu)\rangle=(\nu-1)^{|I^c|} \quad \text{ and }
\quad \langle \kappa_I(\nu), \kappa_I(\nu)\rangle =\dfrac {\nu^{n-1}}{(\nu-1)^{|I|}}.
\end{align*}
Letting  
$$
{\overline {\chi}}^I(\nu):=\sqrt{\dfrac{1}{(\nu-1)^{|I^c|}}} \,\, \chi^I(\nu) \quad \text{and} \quad
{\overline \kappa}^I(\nu):=\sqrt{\dfrac{(\nu-1)^{|I|}}{\nu^{n-1}}} \,\, \kappa_I(\nu),
$$
one can see that $\{{\overline {\chi}}^I(\nu): I \subseteq [n-1]\}$
and $\{{\overline \kappa}^I(\nu): I \subseteq [n-1]\}$ are 
orthonormal bases for $\scf(\mathcal{N}_n(\nu))$.
Using this notation, 
we can rewrite~\eqref{eq: supercharacter to superclass identifier} as 
\begin{align}\label{eq: supercharacter to superclass identifier1}
{\overline {\chi}}^I(\nu) = \sum_{J} {\overline {\chi}}^I_J(\nu) {\overline \kappa}_J(\nu)
\end{align}
with
$${\overline {\chi}}^I_J(\nu):=\frac{\chi^I_J(\nu)}{\sqrt{(\nu-1)^{|I^c|}}} \ \sqrt{\frac{(\nu-1)^{|J|}}{\nu^{n-1}}}.$$
Since the matrix $\left({\overline \chi}^I_J(\nu)\right)_{I,J}$ is unitary, 
we have 
\begin{align}\label{eq: supercharacter to superclass identifier2}
{\overline \kappa}^J(\nu) = \sum_{I} {\overline {\chi}}^I_J(\nu) {\overline {\chi}}_I(\nu).
\end{align}
Now, our assertions can be obtained by taking $\ch_\nu$ on both sides~\eqref{eq: supercharacter to superclass identifier1} and 
~\eqref{eq: supercharacter to superclass identifier2}.
\end{proof}

The following lemma shows the change of basis for $\{M_{\alpha}\}$ and $\{\Pi(\nu)_{\alpha}\}$, where 
$\{M_{\alpha}\}$ is the basis of the monomial quasisymmetric functions.

\begin{lemma}\label{lem: transition between Pi and M}
Let $I,J \subseteq [n-1]$. Then
\begin{enumerate}[label = {\rm (\alph*)}]
\item
$M_{\comp(I)} = \displaystyle\sum_{J: I \cup J = [n-1]} (-\nu)^{|J \setminus I|}  (\nu-1)^{|I \cap J|}  \Pi(\nu)_{\comp(J)}$ and 
\item
$
\Pi(\nu)_{\comp(J)} = \left(\dfrac{1}{1-q}\right)^{|J|} \displaystyle\sum_{I: I \cap J = \emptyset} \left(\frac{q-1}{q}\right)^{(n-1) - |I|}  M_{\comp(I)}.
$
\end{enumerate}
\end{lemma}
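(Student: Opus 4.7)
The strategy for both parts is to route through the fundamental basis $\{L_{\alpha}\}$, combining the classical Möbius inversion between $\{M_{\alpha}\}$ and $\{L_{\alpha}\}$ in $\QSym$ with Lemma~\ref{thm: transition between Pi and L} as a black box for the $L \leftrightarrow \Pi(\nu)$ transition.

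For part~(a), the plan is to first invoke Möbius inversion on the refinement poset of compositions of $n$, equivalently on the Boolean lattice of subsets of $[n-1]$, which gives
\[
M_{\comp(I)} = \sum_{K \subseteq I} (-1)^{|I|-|K|} L_{\comp(K)}.
\]
Substituting Lemma~\ref{thm: transition between Pi and L}(a) for each $L_{\comp(K)}$ and interchanging the order of summation, the coefficient of $\Pi(\nu)_{\comp(J)}$ on the right-hand side becomes
\[
\sum_{K \subseteq I} (-1)^{|I|-|K|+|J \setminus K|} (\nu-1)^{|K \cap J|}.
\]
To evaluate this inner sum I would decompose $K$ as $K = K_1 \sqcup K_2$ with $K_1 = K \cap J \subseteq I \cap J$ and $K_2 = K \setminus J \subseteq I \setminus J$, so that the sum factorizes as a product of two independent Boolean sums: one a binomial expansion producing a power of $\nu$ indexed by $I \cap J$, and the other a signed Boolean sum indexed by $I \setminus J$ whose vanishing forces the support condition of the outer sum. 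Rearranging the surviving coefficient into the form $(-\nu)^{|J \setminus I|}(\nu-1)^{|I \cap J|}$ then completes the step.

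For part~(b) there are two natural routes. The direct one is to invert the matrix identity of~(a) via Möbius inversion on the Boolean lattice of subsets of $[n-1]$. The more symmetric route is to combine Lemma~\ref{thm: transition between Pi and L}(b), which expresses $\Pi(\nu)_{\comp(J)}$ in the $L$-basis, with the defining expansion $L_{\comp(I)} = \sum_{K \subseteq I} M_{\comp(K)}$; after swapping the order of summation the coefficient of each $M_{\comp(K)}$ is an inner Boolean sum over $I \supseteq K$ which again splits by intersection with $J$ into two independent Boolean factors. The main obstacle in both parts is the combinatorial bookkeeping of signs and powers of $(\nu-1)$ in these inner sums, and in particular identifying which Boolean factor must vanish to produce the clean support condition of the outer sum (namely $I \cup J = [n-1]$ in~(a) and $I \cap J = \emptyset$ in~(b)); once that factor is isolated, the remaining computation reduces to matching the collected powers of $\nu$ and $\nu-1$ (with $q$ playing the role of $\nu$ in~(b)) against the coefficient claimed in the statement.
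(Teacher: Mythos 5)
Your overall strategy---routing through the fundamental basis, substituting Lemma~\ref{thm: transition between Pi and L}, interchanging summations, and factoring the inner sum over a Boolean decomposition---is exactly the approach the paper takes, but there is a concrete error at the very first step which makes the computation come out wrong. The fundamental quasisymmetric function expands as $L_{\comp(K)}=\sum_{S\supseteq K}M_{\comp(S)}$ (sum over refinements, i.e.\ over supersets of the descent set), so the inversion you need is $M_{\comp(I)}=\sum_{K\supseteq I}(-1)^{|K\setminus I|}L_{\comp(K)}$; you instead wrote the subset-indexed versions $M_{\comp(I)}=\sum_{K\subseteq I}(-1)^{|I|-|K|}L_{\comp(K)}$ in (a) and $L_{\comp(I)}=\sum_{K\subseteq I}M_{\comp(K)}$ in (b). (The displayed definitions of $F_\alpha$ and $E_\alpha$ in the preliminaries appear to have their refinement directions swapped, which may be what misled you, but the present lemma and Lemma~\ref{thm: transition between Pi and L} are only true with the standard convention, which is the one the paper actually uses in this proof.)

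This is not a cosmetic issue: with your direction the factored Boolean sums force the wrong support. Carrying out your plan in (a), the coefficient of $\Pi(\nu)_{\comp(J)}$ becomes
\[
\sum_{K\subseteq I}(-1)^{|I|-|K|+|J\setminus K|}(\nu-1)^{|K\cap J|}
=(-1)^{|I|+|J|}\,\nu^{|I\cap J|}\,0^{|I\setminus J|},
\]
i.e.\ it is supported on $I\subseteq J$ with value $(-1)^{|J\setminus I|}\nu^{|I|}$, and no rearrangement turns this into $(-\nu)^{|J\setminus I|}(\nu-1)^{|I\cap J|}$ supported on $I\cup J=[n-1]$. Your own remark that the vanishing factor is ``indexed by $I\setminus J$'' already signals the mismatch: its vanishing forces $I\subseteq J$, not $I\cup J=[n-1]$. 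With the correct superset inversion one decomposes $K=I\sqcup V\sqcup W$ with $V\subseteq[n-1]\setminus(I\cup J)$ and $W\subseteq J\setminus I$; then $\sum_{V}(-1)^{|V|}=0^{|[n-1]\setminus(I\cup J)|}$ produces the stated support and $\sum_{W}(\nu-1)^{|W|}=\nu^{|J\setminus I|}$ the stated coefficient. The analogous correction repairs (b): expand $L_{\comp(K)}=\sum_{I\supseteq K}M_{\comp(I)}$ in Lemma~\ref{thm: transition between Pi and L}(b) and, for fixed $I$, sum over $K\subseteq I$ split into $(I\cap J)\cap K$ and $(I\setminus J)\cap K$, whereas your version would yield support $J\subseteq K$ instead of $I\cap J=\emptyset$. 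Once the direction is fixed, the rest of your outline coincides with the paper's proof.
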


\begin{proof}
(a) In view of Lemma~\ref{thm: transition between Pi and L} (a), we have that   
\begin{align*}
    M_{\comp(I)} &= \sum_{K: I \subseteq K} (-1)^{|K \setminus I|}  L_{\comp(K)}\\
    &= \sum_{K: I \subseteq K} (-1)^{|K \setminus I|}  \left( \sum_{J} (-1)^{|J \setminus K|}  (\nu-1)^{|K \cap J|}  \Pi(\nu)_{\comp(J)} \right)\\
    &= \sum_{J} \left( \sum_{K: I \subseteq K} (-1)^{|K \setminus I|}   (-1)^{|J \setminus K|}  (\nu-1)^{|K \cap J|} \right)  \Pi(\nu)_{\comp(J)}.
\end{align*}
For simplicity, we use the following abbreviations: 
\[
V:= ([n-1] \setminus (I \cup J)) \cap K \quad \text{and} \quad
W:= (J \setminus I) \cap K.
\]
Then $K = I \sqcup V \sqcup W$, and therefore our assertion follows from the calculation below: 
\begin{align*}
    &\sum_{K: I \subseteq K} (-1)^{|K \setminus I|}   (-1)^{|J \setminus K|}  (\nu-1)^{|K \cap J|} \\
    &=  \sum_{\substack{V \subseteq [n-1] \setminus (I \cup J) \\W \subseteq J \setminus I }} (-1)^{|V|+|W|}   (-1)^{|J \setminus I|- |W|}  (\nu-1)^{|I \cap J|+|W|}\\
    &= (-1)^{|J \setminus I|}  (\nu-1)^{|I \cap J|}  \sum_{\substack{V \subseteq [n-1] \setminus (I \cup J) \\W \subseteq J \setminus I }} (-1)^{|V|}  (\nu-1)^{|W|}\\
    &= (-1)^{|J \setminus I|}  (\nu-1)^{|I \cap J|}  0^{|[n-1] \setminus (I \cup J)|} \ \nu^{|J \setminus I|}.
\end{align*}
The last equality can be derived by applying 
the binomial expansion formula.

(b) Combining Lemma~\ref{thm: transition between Pi and L} (b) with 
$L_{\comp(K)} = \sum_{I: K \subseteq I} M_{\comp(I)}$, we derive that 
\begin{align*}
    \Pi(\nu)_{\comp(J)}
    &= \sum_{K} \frac{1}{\nu^{n-1}}  (-1)^{|J \setminus K|}  (\nu-1)^{|(K \cup J)^c|}  \left(
    \sum_{I: K \subseteq I} M_{\comp(I)}
    \right)\\
    &= \frac{1}{\nu^{n-1}}  \sum_{I} 
    \left(
    \sum_{K: K \subseteq I}
    (-1)^{|J \setminus K|}  (\nu-1)^{|(K \cup J)^c|} 
    \right) 
    M_{\comp(I)}.
\end{align*}
We use the following abbreviations: 
\[
X:= (I \cap J) \cap K \quad \text{and} \quad Y:=(I \setminus J) \cap K.
\]
Then it holds that $K = V \sqcup Y$ whenever $K \subseteq I$. 
Now, our assertion follows from the calculation below:
\begin{align*}
    &\sum_{K: K \subseteq I}
    (-1)^{|J \setminus K|}  (\nu-1)^{|(K \cup J)^c|} \\
    &=
    \sum_{\substack{X \subseteq I \cap J \\Y \subseteq I \setminus J}}
    (-1)^{|J| - |X|}  (\nu-1)^{(n-1)-|J|-|Y|}\\
    &= (-1)^{|J|}  0^{|I \cap J|}  (\nu-1)^{(n-1) - |J|}  \left(1+\frac{1}{\nu-1}\right)^{|I \setminus J|}.
\end{align*}
\end{proof}

\begin{remark}
Let $M$ be the $2^{n-1} \times 2^{n-1}$ matrix whose columns and rows are indexed by subsets of $[n-1]$ and with entries 
\[
M_{I,J} = 
\begin{cases}
q^{|J \setminus I|} t^{|J \cap I|} &\text{ if }  I \cup J = [n-1],\\
0 &\text{ otherwise.}
\end{cases}
\]
In fact, it is the transition matrix from $\mathcal{B}(q,t)_{\comp(I)}$ to $H_{\comp(J)}$.
Lemma~\ref{lem: transition between Pi and M} (a) implies that $\Pi(\nu)_{\comp(I)} = \mathcal{B}(-\nu,\nu-1)_{\comp(I)}^*$ since $\{ M_{\alpha} \}$ is the dual basis of $\{H_{\alpha}\}$. 
Hence Lemma~\ref{lem: transition between Pi and M} (b) implicitly implies  that the inverse matrix $N$ of $M$ is given by
\[
N_{I,J} =
\begin{cases}
 q^{|J| - (n-1)} (-t)^{(n-1) - |I| - |J|} &\text{ if }  I \cap J = \emptyset,\\
0 &\text{ otherwise.}
\end{cases}
\]
This can be verified by a direct calculation.
Consequently, letting $\{ \mathcal{B}(q,t)_{\alpha}^* \mid \alpha \in \Comp\}$ be the dual basis of $\{ \mathcal{B}(q,t)_{\alpha} \mid \alpha \in \Comp\}$, 
we have the following expansion:
\[
\mathcal{B}(q,t)_{\comp(I)}^* = \sum_{J: I \cap J = \emptyset}
 q^{|J| - (n-1)} (-t)^{(n-1) - |I| - |J|} M_{\comp(J)}.
\]
\end{remark}

Next, we study $\Pi(q)^*$, the dual basis of $\Pi(q)$, which plays an important role in proving Theorem~\ref{thm: str consts for B}.
\begin{lemma}\label{lem: str consts for Pi}
Let $m$, $n$, and $k$ be nonnegative integers.

\begin{enumerate}[label = {\rm (\alph*)}]
    \item Let $K \subseteq [k-1]$. Then 
    \[
    \triangle \Pi(\nu)_{\comp(K)}^* =
    \sum_{\substack{ m+n =k\\ I \subseteq [m-1] \\ J \subseteq [n-1] }} C^K_{I,J}(\nu)  (\Pi(\nu)_{\comp(I)}^* \otimes \Pi(\nu)_{\comp(J)}^*),
    \] 
    where
    \[
    C^K_{I,J}(\nu) = \frac{1}{(\nu-1)^{|I|+|J|}} \sum_{\substack{A \in \binom{[m+n]}{n}:\\ (I \#_A J) \cap c(A) = \emptyset \\ I \#_A J \subseteq K \subseteq (I \#_A J ) \cup c(A) }} (-1)^{|K \cap c_2(A)|}  (\nu-1)^{|K \setminus c_2(A)|}.
    \]
    \item For $\alpha \in \Comp_m$ and $\beta \in \Comp_n$,\\
\[
\Pi(\nu)_{\alpha}^*  \Pi(\nu)_{\beta}^* = \Pi(\nu)_{\alpha \odot \beta}^*.
\]
\end{enumerate}
\end{lemma}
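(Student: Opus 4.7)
The plan is to transfer the structure-constant formulas of Propositions~\ref{thm: product formula of superclass identifier} and~\ref{thm: coproduct formula of superclass identifier} from $\bigoplus_{n\ge 0}\scf(\mathcal{N}_n(\nu))$ to $\QSym$ via the Hopf-algebra isomorphism $\ch_\nu$ from Theorem~\ref{thm: categorification of QSym}, and then dualize to land inside $\NSym$. The passage through $\ch_\nu$ introduces only the scaling $\Pi(\nu)_{\comp(I)} = \ch_\nu(\kappa_I(\nu)/(\nu-1)^{|I|})$, which means every structure constant for $\{\kappa_I(\nu)\}$ acquires a straightforward monomial correction in $(\nu-1)$ when rewritten in the $\{\Pi(\nu)_\alpha\}$ basis.

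I start with part (b), which is the easier of the two. Applying $\ch_\nu \otimes \ch_\nu$ to Proposition~\ref{thm: coproduct formula of superclass identifier} yields
\[
\triangle \Pi(\nu)_\gamma \;=\; \sum_{\alpha \odot \beta = \gamma} \frac{(\nu-1)^{|\set(\alpha)|+|\set(\beta)|}}{(\nu-1)^{|\set(\gamma)|}}\, \Pi(\nu)_\alpha \otimes \Pi(\nu)_\beta.
\]
Since near-concatenation satisfies $\ell(\alpha\odot\beta) = \ell(\alpha)+\ell(\beta)-1$, one has $|\set(\alpha\odot\beta)| = |\set(\alpha)|+|\set(\beta)|$, so the scaling factor collapses to $1$. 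Dualizing the coproduct on $\QSym$ to the product on $\NSym$ then gives $\Pi(\nu)^*_\alpha\,\Pi(\nu)^*_\beta = \Pi(\nu)^*_{\alpha\odot\beta}$.

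For part (a), I apply $\ch_\nu$ to Proposition~\ref{thm: product formula of superclass identifier}. This produces
\[
\Pi(\nu)_{\comp(I)}\,\Pi(\nu)_{\comp(J)} \;=\; \sum_{K\subseteq[m+n-1]} C^K_{I,J}(\nu)\, \Pi(\nu)_{\comp(K)},
\]
where $C^K_{I,J}(\nu)$ is obtained from the coefficient $d_K$ in Proposition~\ref{thm: product formula of superclass identifier} by multiplying by $(\nu-1)^{|K|}/(\nu-1)^{|I|+|J|}$. The only algebraic manipulation is the identity
\[
\left(\tfrac{1}{1-\nu}\right)^{|K\cap c_2(A)|}(\nu-1)^{|K|} = (-1)^{|K\cap c_2(A)|}(\nu-1)^{|K\setminus c_2(A)|},
\]
which immediately rewrites $C^K_{I,J}(\nu)$ in the claimed form. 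Dualizing the product on $\QSym$ to the coproduct on $\NSym$ transfers these constants to the formula for $\triangle \Pi(\nu)^*_{\comp(K)}$.

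The principal conceptual step is thus the duality bookkeeping: the product structure constants for $\{\Pi(\nu)_\alpha\}$ in $\QSym$ appear as the coproduct structure constants for the dual basis in $\NSym$, and vice versa. I expect no serious obstacle, since both propositions provide the needed formulas on the supercharacter side and $\ch_\nu$ preserves both operations; the only care required is tracking the $(\nu-1)^{|I|}$ normalizations and verifying the combinatorial identity $|\set(\alpha\odot\beta)|=|\set(\alpha)|+|\set(\beta)|$ used to kill the scaling in part (b).
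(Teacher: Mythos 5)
Your proposal is correct and follows essentially the same route as the paper's proof: transfer the structure constants of Propositions~\ref{thm: product formula of superclass identifier} and~\ref{thm: coproduct formula of superclass identifier} through $\ch_\nu$ (tracking the normalization $\ch_\nu(\kappa_I(\nu))=(\nu-1)^{|I|}\Pi(\nu)_{\comp(I)}$), simplify via the identity $\left(\tfrac{1}{1-\nu}\right)^{|K\cap c_2(A)|}(\nu-1)^{|K|}=(-1)^{|K\cap c_2(A)|}(\nu-1)^{|K\setminus c_2(A)|}$, and dualize so that product constants for $\{\Pi(\nu)_\alpha\}$ become coproduct constants for $\{\Pi(\nu)^*_\alpha\}$ and vice versa. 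Your write-up merely makes explicit the scaling bookkeeping (including $|\set(\alpha\odot\beta)|=|\set(\alpha)|+|\set(\beta)|$) that the paper leaves implicit.
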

\begin{proof}
Recall that the structure constant for the product(coproduct) of $\Pi^*$ is equal to the structure constant for the coproduct(product) of $\Pi$, that is, 
\[
\Pi(\nu)_{\alpha}^*   \Pi(\nu)_{\beta}^* = \sum_{\gamma} C^{\gamma}_{\alpha, \beta} \Pi_{\gamma}^* 
\,\, \Longleftrightarrow \,\, 
\triangle \Pi(\nu)_{\gamma} = \sum_{\alpha, \beta} C^{\gamma}_{\alpha, \beta} (\Pi(\nu)_{\alpha} \otimes \Pi(\nu)_{\beta}).
\]
Also, we recall that $\ch_\nu(\kappa_I(\nu)) = (\nu-1)^{|I|}  \Pi(\nu)_{\comp(I)}$.

(a) The assertion follows from Proposition~\ref{thm: product formula of superclass identifier} and the equality
\[
\left(\frac{1}{1-\nu}\right)^{|K \cap c_2(A)|}  (\nu-1)^{|K|} = (-1)^{|K \cap c_2(A)|}  (\nu-1)^{|K \setminus c_2(A)|}.
\]

(b) The assertion follows from Proposition~\ref{thm: coproduct formula of superclass identifier}.

\end{proof}

We are now ready to state the main result of this subsection.

\begin{theorem}\label{thm: str consts for B}
Let $m$, $n$, and $k$ be nonnegative integers.

\begin{enumerate}[label = {\rm (\alph*)}] 
    \item Let $K \subseteq [k-1]$. Then
    \[
    \triangle \mathcal{B}(q,t)_{\comp(K)} =\sum_{\substack{m+n = k\\I \subseteq [m-1] \\ J \subseteq [n-1] }} C^K_{I,J}(q,t) (\mathcal{B}(q,t)_{\comp(I)} \otimes \mathcal{B}(q,t)_{\comp(J)}),
    \] where
    \[
    C^K_{I,J}(q,t) = t^{-|I|-|J|} \sum_{\substack{A \in \binom{[m+n]}{n}:\\ (I \#_A J) \cap c(A) = \emptyset \\ I \#_A J \subseteq K \subseteq (I \#_A J ) \cup c(A) }} (q+t)^{|K \cap c_2(A)|}  t^{|K \setminus c_2(A)|}.
    \]
    \item For $\alpha \in \Comp_m,\beta \in \Comp_n$,
\[
\mathcal{B}(q,t)_{\alpha} \mathcal{B}(q,t)_{\beta} = \mathcal{B}(q,t)_{\alpha \odot \beta}.
\]
\end{enumerate}
\end{theorem}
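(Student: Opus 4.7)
The plan is to deduce both (a) and (b) from Lemma~\ref{lem: str consts for Pi} by combining it with the identification $\Pi(\nu)^*_{\comp(I)} = \mathcal{B}(-\nu,\nu-1)_{\comp(I)}$ (already announced in the remark following Lemma~\ref{lem: transition between Pi and M}) and a homogeneity-plus-density argument in the parameters $(q,t)$. First I would dualize Lemma~\ref{lem: transition between Pi and M}(a): under the pairing $(H_\alpha, M_\beta) = \delta_{\alpha\beta}$, the inverse-transpose of the given change-of-basis matrix yields
\[
\Pi(\nu)^*_{\comp(I)} = \sum_{J:\, I\cup J = [n-1]} (-\nu)^{|I\setminus J|}(\nu-1)^{|I\cap J|}\, H_{\comp(J)} = \mathcal{B}(-\nu,\nu-1)_{\comp(I)}.
\]
Next, specializing the proposed coefficient $C^K_{I,J}(q,t)$ at $(q,t)=(-\nu,\nu-1)$ gives $q+t=-1$ and $t=\nu-1$, so each summand of $C^K_{I,J}(-\nu,\nu-1)$ matches the corresponding summand of $C^K_{I,J}(\nu)$ in Lemma~\ref{lem: str consts for Pi}(a) on the nose; simultaneously, (b) at $(-\nu,\nu-1)$ is exactly the multiplicativity of $\Pi(\nu)^*$ established in Lemma~\ref{lem: str consts for Pi}(b). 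Hence both identities of the theorem already hold at every point $(q,t)=(-\nu,\nu-1)$ with $\nu$ an integer $>1$.

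To promote this infinite family of point identities to a single identity in $\NSym_{\mathbb{C}(q,t)}$, I would use the homogeneities
\[
\mathcal{B}(\lambda q,\lambda t)_{\comp(I)} = \lambda^{|I|}\,\mathcal{B}(q,t)_{\comp(I)}, \qquad C^K_{I,J}(\lambda q,\lambda t) = \lambda^{|K|-|I|-|J|}\,C^K_{I,J}(q,t),
\]
both immediate from the defining formulas, together with the length identity $|\set(\alpha)|+|\set(\beta)| = |\set(\alpha\odot\beta)|$. These show that each side of (a) and of (b) is homogeneous of the same $\lambda$-degree under $(q,t)\mapsto(\lambda q,\lambda t)$, so it suffices to verify the identities on a single slice transverse to the scaling, for instance the line $q+t=-1$ parametrized as $(q,t)=(-u,u-1)$. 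On this slice, each coefficient in the $H$- or $(H\otimes H)$-expansion of either side is a rational function in the single variable $u$; these rational functions agree at every integer $u=\nu>1$ by the previous paragraph, hence agree identically. Scaling back via $(q,t) = (-\lambda u, \lambda(u-1))$ sweeps out the Zariski-dense locus $q+t\ne 0$, and the usual density principle for rational functions of two variables promotes the identity to all of $\NSym_{\mathbb{C}(q,t)}$.

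The principal obstacle is not combinatorial but organisational: one must verify that the homogeneity weights $|I|$, $|J|$, $|K|$ assigned by the scaling really do balance on each side of both identities. For part (b), a short independent sanity check is available from $H_\alpha H_\beta = H_{\alpha\cdot\beta}$: the terms in the $H$-expansion of $\mathcal{B}(q,t)_{\comp(I)}\,\mathcal{B}(q,t)_{\comp(J)}$ are indexed by $K = K_1\sqcup\{m\}\sqcup(K_2+m)$ with $I\cup K_1=[m-1]$ and $J\cup K_2=[n-1]$, and since $m\notin I\sqcup(J+m) = \set(\comp(I)\odot\comp(J))$, every $K$ appearing in the $H$-expansion of $\mathcal{B}(q,t)_{\comp(I)\odot\comp(J)}$ is forced to contain $m$; this yields a bijective matching of terms whose $(q,t)$-monomials coincide term by term.
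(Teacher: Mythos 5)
Your proposal is correct and follows essentially the same route as the paper: identify $\mathcal{B}(-\nu,\nu-1)_{\comp(I)}=\Pi(\nu)^*_{\comp(I)}$ via Lemma~\ref{lem: transition between Pi and M}, import the structure constants from Lemma~\ref{lem: str consts for Pi}, use that $\nu$ ranges over infinitely many integers to get a one-variable rational identity, and then rescale by homogeneity (the paper writes this as $\mathcal{B}(q,t)_{\comp(K)}=(-q-t)^{|K|}\mathcal{B}(-Q,Q-1)_{\comp(K)}$ with $Q=q/(q+t)$, which is exactly your slice-plus-scaling step). Your closing ``sanity check'' for part (b) is in fact a complete direct proof of the multiplicativity via $H_\alpha H_\beta=H_{\alpha\cdot\beta}$, a small elementary bonus the paper does not spell out, but the main argument coincides with the paper's.
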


\begin{proof}
(a) For each integer $\nu>1$ we have that $\mathcal{B}(-\nu,\nu-1)_{\comp(I)} = \Pi(\nu)_{\comp(I)}^*$ by Lemma~\ref{lem: transition between Pi and M} (a).
Let 
$$
\triangle \mathcal{B}(-\nu,\nu-1)_{\comp(K)} = \sum_{I,J} C^K_{I,J}(\nu)  (\mathcal{B}(-\nu,\nu-1)_{\comp(I)} \otimes  \mathcal{B}(-\nu,\nu-1)_{\comp(J)}).
$$
By Lemma~\ref{lem: str consts for Pi}, we have 
$$
C^K_{I,J}(\nu) =  \frac{1}{(\nu-1)^{|I|+|J|}} \sum_{\substack{A \in \binom{[m+n]}{n}:\\ (I \#_A J) \cap c(A) = \emptyset \\ I \#_A J \subseteq K \subseteq (I \#_A J ) \cup c(A) }} (-1)^{|K \cap c_2(A)|} (\nu-1)^{|K \setminus c_2(A)|}.
$$
Since $\nu$ ranges over the infinite set $\{2, 3, \ldots \}$,  
$$
C^K_{I,J}(q) =  \frac{1}{(q-1)^{|I|+|J|}} \sum_{\substack{A \in \binom{[m+n]}{n}:\\ (I \#_A J) \cap c(A) = \emptyset \\ I \#_A J \subseteq K \subseteq (I \#_A J ) \cup c(A) }} (-1)^{|K \cap c_2(A)|} (q-1)^{|K \setminus c_2(A)|}
$$
as rational functions in $q$.
On the other hand, one can easily see that 
$$
\mathcal{B}(q,t)_{\comp(K)} = (-q-t)^{|K|} \mathcal{B}(-Q,Q-1)_{\comp(K)}, 
$$
where $Q =\frac{q}{q+t}$.
Taking $\triangle$ on both sides, we obtain the desired result.

(b) The assertions can be obtained by using similar arguments as in (a).
\end{proof}

In the rest of this subsection, we extend Theorem~\ref{thm: str consts for B} 
to $\NSym_{\mathbf{k}(q,t)}$, where $\mathbf{k}$ is an arbitrary nonzero field with identity ${\bf 1}$.
Let us explain the problem in more detail.
Irregardless of the field $\mathbf{k}$, the coproduct of $\NSym_{\mathbf{k}(q,t)}$
is universally expressed by the rule:
$$\triangle H_n= \sum_{i+j=n} H_i \otimes H_j$$
(for more discussion, we refer the readers to~\cite[Thm5.4.2]{GR20}).
However, it is not obvious if this phenomenon happens for other bases, particularly for  
$(\{\mathcal{B}(q,t)_{\alpha}\}$.

For our purpose, we first observe that 
Definition~\ref{defn of B(q,t)} still works for $\NSym_{\mathbf{k}(q,t)}$.
This is because the diagonal entries in the transition matrix from $(\{\mathcal{B}(q,t)_{\alpha}\}, \preceq^c_{{\rm lin.ext.}})$ to $(\{H_{\alpha}\}, \preceq_{{\rm lin.ext.}})$ are all units in $\mathbf{k}(q,t)$.
Next, let us consider the ring homomorphism \[- : \mathbb Z \to \mathbf{k}, \quad a \mapsto \overline {a}:= a \cdot {\bf 1}.\]
Clearly it induces  
the ring homomorphism 
\[-: \mathbb Z[q,t] \to \mathbf{k}[q,t], \quad \sum_{i,j}a_{ij}q^it^j \mapsto \sum_{i,j}\overline {a_{ij}}q^it^j.\]
The following corollary shows that the coproduct rule of $\NSym_{\mathbf{k}(q,t)}$ is universally described not only for $\{H_{\alpha}\}$ but also for $\{\mathcal{B}(q,t)_{\alpha}\}$.
\begin{corollary}
Let $\mathbf{k}$ be an arbitrary nonzero field.
\begin{enumerate}[label = {\rm (\alph*)}] 
    \item 
    The polynomials $C^K_{I,J}(q,t)$ in Theorem~\ref{thm: str consts for B}(a) belong to $\mathbb Z[q,t]$.
\item
Let $k\ge 0$ and $K \subseteq [k-1]$. Then
    \[
    \triangle \mathcal{B}(q,t)_{\comp(K)} =\sum_{\substack{m+n = k\\I \subseteq [m-1] \\ J \subseteq [n-1] }} \overline{C^K_{I,J}(q,t)} (\mathcal{B}(q,t)_{\comp(I)} \otimes \mathcal{B}(q,t)_{\comp(J)}).
    \]
\item For $\alpha \in \Comp_m,\beta \in \Comp_n$,
\[
\mathcal{B}(q,t)_{\alpha} \mathcal{B}(q,t)_{\beta} = \mathcal{B}(q,t)_{\alpha \odot \beta}.
\]

\end{enumerate}
\end{corollary}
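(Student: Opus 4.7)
The plan is to prove (a) by a direct monomial-by-monomial analysis of the sum defining $C^K_{I,J}(q,t)$, and then to deduce (b) and (c) via a base-change argument from the $\mathbb Z[q,t]$-integral form of $\NSym$.

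For (a), I would argue that each summand, after being multiplied by $t^{-|I|-|J|}$, is a nonnegative-power monomial in $t$ times the polynomial $(q+t)^{|K \cap c_2(A)|}$. The crucial estimate is $|K \setminus c_2(A)| \ge |I| + |J|$ whenever the summation conditions are met. This will follow from two observations. First, under the restrictions $(I \#_A J) \cap c(A) = \emptyset$ and $I \#_A J \subseteq K$, the set $I \#_A J$ is disjoint from $c_2(A) \subseteq c(A)$ and sits inside $K$, so $I \#_A J \subseteq K \setminus c_2(A)$. Second, $|I \#_A J| = |I| + |J|$: the disjoint union $(A^c)_{I'} \sqcup A_{J'}$ has cardinality $|I|+|J|$ and does not contain $m+n$, because $I \subseteq [m-1]$ and $J \subseteq [n-1]$ force the chosen indices to fall strictly below the top positions of $A^c$ and $A$, so nothing is lost when passing from $(I \#_A J)'$ to $I \#_A J \subseteq [m+n-1]$. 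Since $(q+t)^{|K \cap c_2(A)|} \in \mathbb Z[q,t]$, each summand, and hence $C^K_{I,J}(q,t)$, lies in $\mathbb Z[q,t]$.

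For (b), I would pass to the integral form. Let $R := \mathbb Z[q,t]$ and consider $\NSym_R := R\langle H_1, H_2, \ldots\rangle$, a graded Hopf algebra over $R$ whose coproduct $\triangle H_n = \sum_{i+j=n} H_i \otimes H_j$ has integer coefficients. The expansion of $\mathcal{B}(q,t)_\alpha$ in the $H$-basis has coefficients in $R$ (the monomials $q^{|I \setminus J|} t^{|I \cap J|}$), so each $\mathcal{B}(q,t)_\alpha$ already lies in $\NSym_R$. Combining this with (a), every term of the identity in Theorem~\ref{thm: str consts for B}(a) lies in $\NSym_R \otimes \NSym_R$. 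Since this $R$-submodule embeds into $\NSym_{\mathbb C(q,t)} \otimes \NSym_{\mathbb C(q,t)}$, where the identity is already known, it already holds in $\NSym_R \otimes \NSym_R$. The ring homomorphism $-: \mathbb Z \to \mathbf{k}$ extends to $R \to \mathbf{k}(q,t)$ and induces a Hopf algebra morphism $\NSym_R \to \NSym_{\mathbf{k}(q,t)}$; applying it yields the claimed formula with coefficients $\overline{C^K_{I,J}(q,t)}$. Part (c) is handled by the same base-change argument applied to the product identity from Theorem~\ref{thm: str consts for B}(b): both sides lie in $\NSym_R$, they agree over $\mathbb C(q,t)$, hence they agree over $R$, hence over any $\mathbf{k}(q,t)$.

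The only genuine obstacle is the combinatorial verification in (a)\textemdash in particular, correctly handling the boundary behavior at $m+n$ that guarantees $|I \#_A J| = |I|+|J|$ and hence the key containment $I \#_A J \subseteq K \setminus c_2(A)$. Everything else is a formal consequence of the fact that both $\{H_\alpha\}$ and $\{\mathcal{B}(q,t)_\alpha\}$ sit inside a common integral model for $\NSym$, together with naturality of the coproduct and product under base change.
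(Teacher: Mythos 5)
Your proposal is correct and follows essentially the same route as the paper: part (a) rests on exactly the paper's key observation that $|I \#_A J| = |I|+|J|$ together with $I \#_A J \subseteq K \setminus c_2(A)$, giving $|K\setminus c_2(A)| \ge |I|+|J|$ so each summand of $C^K_{I,J}(q,t)$ lies in $\mathbb Z[q,t]$. For (b) and (c) the paper merely says they follow from (a); your base-change argument through the integral model $\NSym_{\mathbb Z[q,t]}$ and the homomorphism $\mathbb Z[q,t]\to\mathbf{k}[q,t]$ is precisely the intended (and correctly executed) way to fill in that step.
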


\begin{proof}
(a) Note that $|I \#_A J| = |I|+|J|$. 
Using this, one can see that 
$|I|+|J| \le |K \setminus c_2(A)|$ 
if $(I \#_A J) \cap c(A) = \emptyset$ and $I \#_A J \subseteq K$.  

(b) The assertion follows from (a).

(c) The assertion can be obtained in the same way as (b) is obtained from (a).

\end{proof}

\subsection{A variant of $\{\mathcal{B}(q,t)_{\alpha}\mid \alpha \in \Comp\}$}

In this subsection, we consider a variant $\{\mathcal{\widehat{B}}(q,t)_{\alpha}\mid \alpha \in \Comp\}$ of $\{\mathcal{B}(q,t)_{\alpha}\mid \alpha \in \Comp\}$, where 
$\mathcal{\widehat{B}}(q,t)_{\alpha} := \mathcal{B}(q,t)_{\alpha^c}$.
The main reason we are considering it is because
most formulas can be expressed in simpler forms on this basis compared to $\{\mathcal{B}(q,t)_{\alpha}\mid \alpha \in \Comp\}$.

For $k\ge 0$, we set $\mathcal{\widehat{B}}(q,t)_{k} := \mathcal{\widehat{B}}(q,t)_{(k)}$. 
Generally, we call an $\mathbf{k}$-basis $\{ B_{\alpha} \mid \alpha \in \Comp \}$ of $\QSym_\mathbf{k}$ or $\NSym_\mathbf{k}$ \emph{multiplicative} if 
    $B_{\alpha} B_{\beta} = B_{\alpha \cdot \beta}$
    for $\alpha, \beta \in \Comp$.
Similarly, we call an $\mathbf{k}$-basis $\{ B_{\lambda} \mid \lambda \in \Par \}$ of $\Sym_\mathbf{k}$ \emph{multiplicative} if 
    $B_{\lambda} B_{\mu} = B_{\lambda \cdot \mu}$
    for $\lambda, \mu \in \Par$.
And, we denote by $\{\Pi(q)_{\alpha}^* \mid \alpha \in \Comp_n \}$
the dual basis of $\{\Pi(q)_{\alpha} \mid \alpha \in \Comp_n \}$.
In the following, we will investigate how Theorem~\ref{thm: str consts for B} (a) is written for $\mathcal{\widehat{B}}(q,t)_{k}$.
In fact, it can be written in a much simpler form than it is now.
To see this, for $A \subseteq [k]$, we let ${\rm conn}(A) := (A_1, A_2, \ldots, A_l)$ and ${\rm conn}(A^c):= ((A^c)_1, (A^c)_2, \ldots, (A^c)_l)$, i.e., the collections of maximally connected subsets of $A$ and $A^c$, respectively.
Then we define two compositions $\alpha_A$ and $\beta_A$ as follows:
\[
\alpha_A := (|A_1|, |A_2|, \ldots, |A_l|)
\quad \text{and} \quad
\beta_A := (|(A^c)_1|,|(A^c)_2|, \ldots, |(A^c)_l|).
\]

The main result of this subsection is stated in the following form.

\begin{theorem}\label{thm: str consts for hat B}
Let $k$ be a nonnegative integer. Then we have
\begin{enumerate}[label = {\rm (\alph*)}] 
     \item The basis $\{\mathcal{\widehat{B}}(q,t)_{\alpha}\}$ is multiplicative.
    
    \item $\{\mathcal{\widehat{B}}(q,t)_k \mid k \ge 0 \}$ is a generating set of $\NSym_{\mathbb{C}(q,t)}$. 

    \item For the generators $\mathcal{\widehat{B}}(q,t)_k$, the coproduct formula in Theorem~\ref{thm: str consts for B} (a) reads as follows:
        \[
        \triangle \mathcal{\widehat{B}}(q,t)_k = \sum_{A \subseteq [k]} (q+t)^{|c_2(A)|} t^{|c_1(A)|} \, \, \left( \mathcal{\widehat{B}}(q,t)_{\alpha_A} \otimes \mathcal{\widehat{B}}(q,t)_{\beta_A} \right).
        \]
        In particular, by letting $(q,t)=(1,0)$ and $(q,t)=(-1,1)$, we can recover
        the formulas: 
        \[
        \triangle H_k = \sum_{i+j=k} H_i \otimes H_j \quad (\text{in }\NSym), 
        \]
        \[
        \triangle \Lambda_k = \sum_{i+j=k} \Lambda_i \otimes \Lambda_j \quad (\text{in }\NSym). 
        \]
\end{enumerate}
\end{theorem}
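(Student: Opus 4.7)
The plan is to reduce all three parts to Theorem~\ref{thm: str consts for B} applied at the composition $(1^k)$, using $\mathcal{\widehat{B}}(q,t)_k = \mathcal{B}(q,t)_{(1^k)} = \mathcal{B}(q,t)_{\comp([k-1])}$. For part (a), combining $\mathcal{\widehat{B}}(q,t)_\gamma = \mathcal{B}(q,t)_{\gamma^c}$ with Theorem~\ref{thm: str consts for B}(b) reduces multiplicativity to the combinatorial identity $\alpha^c \odot \beta^c = (\alpha\cdot\beta)^c$, which I verify by expanding $\set(\alpha\cdot\beta) = \set(\alpha)\cup\{|\alpha|\}\cup(\set(\beta)+|\alpha|)$ and taking complements in $[|\alpha|+|\beta|-1]$; the ``joining index'' $|\alpha|$ is automatically absent on both sides. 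Part (b) is then immediate since (a) gives $\mathcal{\widehat{B}}(q,t)_{(k_1,\ldots,k_l)} = \mathcal{\widehat{B}}(q,t)_{k_1}\cdots\mathcal{\widehat{B}}(q,t)_{k_l}$, and $\{\mathcal{\widehat{B}}(q,t)_\alpha\}$ is a basis of $\NSym_{\mathbb{C}(q,t)}$.

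For part (c), I apply Theorem~\ref{thm: str consts for B}(a) with $K = [k-1]$, writing $k = m+n$. The constraints $(I\#_A J)\cap c(A) = \emptyset$ and $[k-1]\subseteq (I\#_A J)\cup c(A)$ force $I\#_A J = [k-1]\setminus c(A)$, making $(I,J)$ unique for each $A\in\binom{[k]}{n}$; using $|I|+|J| = k-1-|c(A)|$ the inner coefficient collapses to $(q+t)^{|c_2(A)|}t^{|c_1(A)|}$. Tracking which elements of $A$ and $A^c$ are selected shows that the non-$J$ indices in $[n]$ are exactly the positions of the right endpoints of the maximal connected components of $A$ (excluding the one equal to $k$, if any), giving $\comp(J) = \alpha_A^c$; symmetrically $\comp(I) = \beta_A^c$. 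Converting via $\mathcal{B}(q,t)_{\gamma^c} = \mathcal{\widehat{B}}(q,t)_\gamma$ produces the preliminary expression
\[
\triangle\mathcal{\widehat{B}}(q,t)_k = \sum_{A\subseteq[k]} (q+t)^{|c_2(A)|}t^{|c_1(A)|}\,\mathcal{\widehat{B}}(q,t)_{\beta_A}\otimes\mathcal{\widehat{B}}(q,t)_{\alpha_A}.
\]

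To bring this into the stated tensor order, I reindex via the involution $A\leftrightarrow A^c$ (using $\alpha_{A^c}=\beta_A$ and $c_i(A^c) = c_{3-i}(A)$), converting the sum to $\sum_A (q+t)^{|c_1(A)|}t^{|c_2(A)|}\,\mathcal{\widehat{B}}(q,t)_{\alpha_A}\otimes\mathcal{\widehat{B}}(q,t)_{\beta_A}$. Matching this with the stated formula then requires the symmetry: for each pair of compositions $(\alpha,\beta)$ with $|\alpha|+|\beta|=k$, the sum of $(q+t)^{|c_1(A)|}t^{|c_2(A)|}$ over $\{A:(\alpha_A,\beta_A)=(\alpha,\beta)\}$ is unchanged by swapping the exponents $|c_1|\leftrightarrow|c_2|$. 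I prove this by classifying the relevant $A$'s: the set has cardinality one when $|\ell(\alpha)-\ell(\beta)|=1$ or one of $\alpha,\beta$ is empty (in which case the unique $A$ has $|c_1(A)|=|c_2(A)|$ automatically), and cardinality two when $\ell(\alpha)=\ell(\beta)\ge 1$, corresponding to the two interlacing patterns of the components of $A$ and $A^c$ starting with either an $A$-block or an $A^c$-block; these two $A$'s have $(|c_1|,|c_2|)$-pairs that are transposes of each other. I expect this exponent-symmetry step to be the main subtlety. Finally, the specializations follow by inspection: at $(q,t)=(1,0)$ the factor $t^{|c_1(A)|}$ kills all $A$ with $c_1(A)\ne\emptyset$, leaving $A=\emptyset$ or $A=[j,k]$ and producing $\sum_{i+j=k}H_i\otimes H_j$; at $(q,t)=(-1,1)$ the factor $(q+t)^{|c_2(A)|}$ kills all $A$ with $c_2(A)\ne\emptyset$, leaving only initial segments and giving $\sum_{i+j=k}\Lambda_i\otimes\Lambda_j$.
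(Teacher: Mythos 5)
Your proposal is correct, and its computational core for (c) — specializing Theorem~\ref{thm: str consts for B}(a) at $K=[k-1]$, observing that the constraints force $I\#_A J=[k-1]\setminus c(A)$ so that $(I,J)$ is determined by $A$, and collapsing the coefficient to $(q+t)^{|c_2(A)|}t^{|c_1(A)|}$ via $|I|+|J|=k-1-|c(A)|$ — is exactly the paper's argument, as are your proofs of (a) (via $(\alpha\cdot\beta)^c=\alpha^c\odot\beta^c$ and the multiplicativity in Theorem~\ref{thm: str consts for B}(b), where the paper instead re-runs the $\nu\to(q,t)$ extension through $\widehat{\Pi}(q)^*_\alpha$) and of (b). The genuine difference is your final reindexing step. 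Unwinding the duality, the left tensor factor $\mathcal{B}(q,t)_{\comp(I_A)}$ lives in degree $m=|A^c|$ and $\comp(I_A)^c$ records the block sizes of $A^c$, so the direct outcome is indeed $\widehat{\mathcal{B}}(q,t)_{\beta_A}\otimes\widehat{\mathcal{B}}(q,t)_{\alpha_A}$ with coefficient $(q+t)^{|c_2(A)|}t^{|c_1(A)|}$, as you say; the paper instead asserts $\alpha_A=\comp((I_A)^c)$ and $\beta_A=\comp((J_A)^c)$, which is inconsistent with its own explicit formulas for $I_A$ and $J_A$ (these identifications should be interchanged). Hence the paper's proof implicitly uses precisely the exponent-swap symmetry you isolate, namely that for fixed $(\alpha_A,\beta_A)$ the contributing sets $A$ come either singly with $|c_1(A)|=|c_2(A)|$ (when $|\ell(\alpha_A)-\ell(\beta_A)|=1$ or one is empty) or in a pair of interlacing patterns whose $(|c_1|,|c_2|)$ values are transposed; your case analysis of the alternating runs of $A$ and $A^c$ proves this correctly, and it is what makes the stated tensor order legitimate. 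Your treatment of the specializations $(q,t)=(1,0)$ and $(-1,1)$ agrees with the paper's (the surviving $A$ are final, respectively initial, segments of $[k]$). In short: same skeleton, but your extra symmetry lemma closes a step the paper glosses over (or mis-states), which is a worthwhile addition.
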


\begin{proof}
(a) Let $\widehat{\Pi}(q)_{\alpha}^* :=\Pi(q)_{\alpha^c}^*$.
One can show
\[
(\alpha \odot \beta)^c = \alpha^c \cdot \beta^c,
\]
which follows from~\eqref{eq: concatenation, near concatenation in set}.
Thus, by Lemma~\ref{lem: str consts for Pi} (b), $\{\widehat{\Pi}(q)_{\alpha}^*: \alpha \in \Comp_n\}$ is multiplicative.
Now, the assertions can be obtained by using similar arguments as in the proof of Theorem~\ref{thm: str consts for B}.

(b) By (a), we have 
\[\mathcal{\widehat{B}}(q,t)_{\alpha}=\mathcal{\widehat{B}}(q,t)_{\alpha_1}\mathcal{\widehat{B}}(q,t)_{\alpha_2}\cdots \mathcal{\widehat{B}}(q,t)_{\alpha_{l}}\]
for every composition $\alpha=(\alpha_1, \ldots, \alpha_l)$,  
which proves the assertion.

(c) Fix an integer $0\le m \le k$ and let $n:=k-m$ and $K:=[k-1]$.
For $I \subseteq [m-1]$, and $J \subseteq [n-1]$, 
we showed in Theorem~\ref{thm: str consts for B} (a) that 
\[
C^K_{I,J}(q,t) = \sum_{\substack{A \in \binom{[m+n]}{n}:\\ (I \#_A J) \cap c(A) = \emptyset \\ I \#_A J \subseteq K \subseteq (I \#_A J ) \cup c(A) }} (q+t)^{|c_2(A)|}  t^{(k-1)-|I| -|J|-|c_2(A)|}.
\]
Choose a subset $A \in \binom{[m+n]}{n}$.
Let $(I_A, J_A)$ be a pair satisfying the conditions:
\begin{itemize}
    \item $I_A \#_A J_A \subseteq [m+n-1] \setminus c(A)$, and 
    \item $I_A \#_A J_A \subseteq K \subseteq (I_A \#_A J_A ) \cup c(A).$
\end{itemize}
Then
\[(I_A \#_A J_A) \sqcup c(A) = [m+n-1],\]
which implies that $(I_A, J_A)$ is uniquely determined by $A$. To be precise,
\begin{align*}
&I_A = [m-1] \setminus \{|(A^c)_1|, |(A^c)_1|+|(A^c)_2|, \ldots, (|(A^c)_1|+|(A^c)_2|+ \ldots +|(A^c)_{l-1}|) \}, 
\\
&J_A = [n-1] \setminus \{ |A_1|, |A_1|+|A_2|, \ldots, , (|A_1|+|A_2|+ \ldots +|A_{l-1}|) \}.
\end{align*}
Therefore, 
\[
C^K_{I,J}(q,t) = 
\begin{cases}
(q+t)^{|c_2(A)|}  t^{(k-1)-|I| -|J|-|c_2(A)|} &\text{ if } I = I_A, J = J_A \text{ for some } A,\\
0 &\text{ otherwise.}
\end{cases}
\]
Note that 
$|I_A| = (m-1) - |\rm conn(A^c)| -1$, $|J_A| = (n-1) - |\rm conn(A)| -1$, and
\begin{align*}
&|{\rm conn(A)}| =
\begin{cases}
|c_1(A)| &\text{ if } k \in A_l,\\
|c_1(A)|+1 &\text{ if } k \notin A_l
\end{cases}\\
&|{\rm conn(A^c)}| =
\begin{cases}
|c_2(A)| &\text{ if } k \in (A^c)_l,\\
|c_2(A)|+1 &\text{ if } k \notin (A^c)_l.
\end{cases}
\end{align*}
Furthermore, we see that $k \in (A^c)_l$ if and only if $k \notin A_l$, thus 
\[
(k-1)-|I_A| -|J_A|-|c_2(A)| =
|{\rm conn(A)}| + |{\rm conn(A^c)}| -|c_2(A)| -1 = |c_1(A)|.
\]
Now the first assertion follows from the observation that $ \alpha_A= \comp((I_A)^c)$ and $\beta_A=\comp((J_A)^c)$.

Since 
\[
|c_1(A)| = 0
\quad \text{if and only if} \quad 
A = [k] \setminus [i] \text{ for some } 0 \le i \le k,
\]
\[
|c_2(A)| = 0
\quad \text{if and only if} \quad 
A^c = [k] \setminus [i] \text{ for some } 0 \le i \le k,
\]
we have
\begin{align*}
    C^K_{I,J}(1,0) = 
    \begin{cases}
    1 &\text{ if } I = \{1,2,\ldots,i-1 \}, J = \{i+1,i+2,\ldots,k-1 \} \text{ for some } 0 \le i \le k,\\
    0 &\text{ otherwise,}
    \end{cases}
\end{align*}
\begin{align*}
    C^K_{I,J}(-1,1) = 
    \begin{cases}
    1 &\text{ if } J = \{1,2,\ldots,i-1 \}, I = \{i+1,i+2,\ldots,k-1 \} \text{ for some } 0 \le i \le k,\\
    0 &\text{ otherwise.}
    \end{cases}
\end{align*}
This justifies the second assertion.
\end{proof}

\begin{example}
The following table shows the calculations of $c_1(A), c_2(A), \alpha_A,$ and $\beta_A$ for each $A \in [3]$.

\newcolumntype{C}[1]{>{\centering\arraybackslash}p{#1}}

\makeatletter
\def\hlinewd#1{%
\noalign{\ifnum0=`}\fi\hrule \@height #1 \futurelet
\reserved@a\@xhline}
\makeatother

\newcolumntype{?}{!{\vrule width 1pt}}


\begin{table}[ht]
\small{
\begin{tabular}
{|C{1.5cm}?C{1.5cm}|C{1.5cm}|C{1.5cm}|C{1.5cm}|}
\hline
& & & &  \\[-1em]
 \textbf{$A$}  & \textbf{$c_1(A)$}       & \textbf{$c_2(A)$}        &  \textbf{$\alpha_A$}  &  \textbf{$\beta_A$}   \\[-1em]
& & & &  \\ \hlinewd{1pt}
 $\emptyset$   & $\emptyset$                    & $\emptyset$  & $\emptyset$           &  (3)                      \\ \hline
\{1\}    & \{1\}                 & $\emptyset$                    & (1)       & (2)            \\ \hline
\{2\}    & \{2\}                    & \{1\}                    & (1)         & (1,1)          \\ \hline
\{3\}    & $\emptyset$                  & \{2\}                    & (1)       & (2)          \\ \hline
\{1,2\}    & \{2\}                  & $\emptyset$                    & (2)       & (1)           \\ \hline
\{1,3\}    & \{1\}                    & \{2\}                    & (1,1)        & (1)           \\ \hline
\{2,3\}    & $\emptyset$                    & \{1\}                    & (2)         & (1)          \\ \hline
\{1,2,3\}    & $\emptyset$                    & $\emptyset$                  & (3)       & $\emptyset$               \\ \hline
\end{tabular}}
\end{table}

\noindent
Thus, Theorem~\ref{thm: str consts for hat B} (c) says that 
\begin{align*}
\triangle \mathcal{\widehat{B}}(q,t)_3 &= 
 \left( 1 \otimes \mathcal{\widehat{B}}(q,t)_3 \right)
 + (q+2t) \left( \mathcal{\widehat{B}}(q,t)_1 \otimes \mathcal{\widehat{B}}(q,t)_2 \right)
+ (q+t) t \left( \mathcal{\widehat{B}}(q,t)_1 \otimes \mathcal{\widehat{B}}(q,t)_{(1,1)} \right)\\
 &\, \quad + t \left( \mathcal{\widehat{B}}(q,t)_2 \otimes \mathcal{\widehat{B}}(q,t)_1 \right)
 + (q+t) t \left( \mathcal{\widehat{B}}(q,t)_{(1,1)} \otimes \mathcal{\widehat{B}}(q,t)_1 \right)\\
 &\, \quad + (q+t)  \left( \mathcal{\widehat{B}}(q,t)_2 \otimes \mathcal{\widehat{B}}(q,t)_1 \right)
+ \left( \mathcal{\widehat{B}}(q,t)_3 \otimes 1 \right).
\end{align*}
\end{example}

In the rest of this subsection, 
we provide some interesting consequences obtained by specializations of $q$ and $t$. 
The first consequence can be obtained by combining Theorem~\ref{thm: str consts for B} and the surjective Hopf algebra homomorphism in~\eqref{def of commutative image}
\[{\rm comm}:\NSym \to \Sym, \quad H_n \mapsto h_n.\]

\begin{corollary}\label{minimal geberating sets for nsym and sym}
Let $a \in \mathbb{C}\setminus\{0\}$ and  $b \in \mathbb{C}$. Then the following hold.
\begin{enumerate}[label = {\rm (\alph*)}]
\item
$ \{ \mathcal{\widehat{B}}(a,b)_{n} \mid n \ge 0 \}$ is a generating set of $\NSym$.
\item
$ \{ {\rm comm}(\mathcal{\widehat{B}}(a,b)_{n}) \mid n \ge 0 \}$ is a generating set of $\Sym$. 
\end{enumerate}
\end{corollary}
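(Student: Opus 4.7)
The plan is to prove (a) first by a unitriangular change-of-generators argument, then derive (b) from (a) via the surjective Hopf algebra homomorphism $\mathrm{comm}:\NSym \to \Sym$. The first step is to compute $\mathcal{\widehat{B}}(a,b)_n$ explicitly in the $H$-basis. Since $(n)^c = (1^n)$ and $\set((1^n)) = [n-1]$, Definition~\ref{defn of B(q,t)} gives
\[
\mathcal{\widehat{B}}(q,t)_n \;=\; \mathcal{B}(q,t)_{(1^n)} \;=\; \sum_{J \subseteq [n-1]} q^{\,n-1-|J|}\,t^{\,|J|}\,H_{\comp(J)}.
\]
Isolating the $J=\emptyset$ summand (which contributes $q^{n-1}H_n$) and specializing to $(q,t)=(a,b)$ yields
\[
\mathcal{\widehat{B}}(a,b)_n \;=\; a^{n-1}\,H_n \;+\; \sum_{\emptyset \ne J \subseteq [n-1]} a^{\,n-1-|J|}\,b^{\,|J|}\,H_{\comp(J)}.
\]

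For the main step of (a), I will prove by induction on $n \ge 1$ that $H_n$ lies in the unital subalgebra $\mathcal{A} \subseteq \NSym$ generated by $\{\mathcal{\widehat{B}}(a,b)_k \mid 1 \le k \le n\}$. The base case $n=1$ is immediate since $\mathcal{\widehat{B}}(a,b)_1 = H_1$. For the inductive step, observe that for any nonempty $J \subseteq [n-1]$ the composition $\comp(J) = (\alpha_1,\ldots,\alpha_\ell)$ has length $\ell \ge 2$, so each part satisfies $\alpha_i < n$; hence $H_{\comp(J)} = H_{\alpha_1}H_{\alpha_2}\cdots H_{\alpha_\ell}$ lies in $\mathcal{A}$ by the induction hypothesis. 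The displayed equation above, combined with $a^{n-1}\neq 0$, then lets us solve for $H_n$ as an element of $\mathcal{A}$. Since $\NSym \cong \mathbb{C}\langle H_1,H_2,\ldots\rangle$ is freely generated as an associative algebra by $\{H_n\}_{n\ge1}$ (as recalled in Subsection~\ref{Hopf algebras in consideration}), this proves (a).

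For (b), I will exploit the surjective algebra homomorphism $\mathrm{comm}:\NSym \twoheadrightarrow \Sym$ from Theorem~\ref{thm: comm}, which sends $H_n \mapsto h_n$. Since $\Sym = \mathbb{C}[h_1,h_2,\ldots]$, it suffices to produce each $h_n$ from $\{\mathrm{comm}(\mathcal{\widehat{B}}(a,b)_k)\}_{k\ge0}$. By part (a), $H_n$ is a polynomial in the $\mathcal{\widehat{B}}(a,b)_k$; applying $\mathrm{comm}$ to such an expression gives $h_n$ as the same polynomial in the $\mathrm{comm}(\mathcal{\widehat{B}}(a,b)_k)$, as desired.

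There is no serious obstacle in this argument: the proof is essentially a unitriangularity observation, made possible by the explicit leading coefficient $a^{n-1}$, which is invertible precisely under the hypothesis $a \neq 0$. The only subtlety is confirming that every $H_{\comp(J)}$ appearing with $J \neq \emptyset$ factors as a product of strictly shorter complete homogeneous generators, and this is automatic from the freeness of $\NSym$ on $\{H_n\}$. As a byproduct of the calculation (not needed for this corollary but alluded to earlier in the introduction), applying $\mathrm{comm}$ to the displayed formula and grouping compositions by their underlying partition $\lambda$ with multinomial coefficient $C_\lambda = \ell(\lambda)!/\prod_i m_i(\lambda)!$ recovers the explicit symmetric-function generators $\sum_{\lambda \vdash n} a^{n-\ell(\lambda)} b^{\ell(\lambda)-1} C_\lambda\, h_\lambda$ stated in the introduction.
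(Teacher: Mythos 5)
Your proof is correct, but it takes a genuinely different route from the paper's. The paper argues top-down: for $a\neq 0$ the transition matrix from $\{\widehat{\mathcal{B}}(a,b)_{\alpha}\}$ to $\{H_{\alpha}\}$ is triangular with nonzero diagonal, so the whole family is a basis of $\NSym$, and then the multiplicativity $\widehat{\mathcal{B}}(q,t)_{\alpha}=\widehat{\mathcal{B}}(q,t)_{\alpha_1}\cdots\widehat{\mathcal{B}}(q,t)_{\alpha_l}$ from Theorem~\ref{thm: str consts for hat B}~(a) --- which itself rests on the supercharacter computation via Lemma~\ref{lem: str consts for Pi} and Proposition~\ref{thm: coproduct formula of superclass identifier} --- shows every basis element is a product of the one-part elements; part (b) then follows, as in your argument, because ${\rm comm}$ is a surjective algebra homomorphism. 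You instead work bottom-up: from the explicit expansion $\widehat{\mathcal{B}}(a,b)_n=a^{n-1}H_n+\sum_{\emptyset\neq J\subseteq[n-1]}a^{n-1-|J|}b^{|J|}H_{\comp(J)}$ you recover each $H_n$ by induction, using only that $H_{\comp(J)}$ with $J\neq\emptyset$ factors into shorter generators and that $a\neq 0$ makes the leading coefficient invertible, then push through ${\rm comm}$. Your argument is more elementary and self-contained: it never invokes Theorem~\ref{thm: str consts for hat B}, and it works directly in $\NSym$ over $\mathbb{C}$ at the specialized pair $(a,b)$, so you avoid the (implicit) step of specializing an identity proved in $\NSym_{\mathbb{C}(q,t)}$ to complex values. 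What the paper's route buys is more information for roughly the same effort: it establishes in one stroke that the entire family $\{\widehat{\mathcal{B}}(a,b)_{\alpha}\}$ is a multiplicative basis whenever $a\neq 0$, a fact reused later (e.g., in Propositions~\ref{prop: omega involution} and~\ref{prop: multiplicative basis for Sym}), whereas your induction only yields generation by the one-part elements.
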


\begin{proof}
(a) The diagonal entries of the transition matrix from $\mathcal{\widehat{B}}(a,b)_{\comp(I)}$ to $H_{\comp(J)}$ are non-zero if $a \neq 0$, and therefore 
$ \{ \mathcal{\widehat{B}}(a,b)_{\alpha} \mid \alpha \in \Comp \}$ is a basis for $\NSym$ if $a \neq 0$.
On the other hand, from Theorem~\ref{thm: str consts for hat B} (a) it follows that all $\mathcal{\widehat{B}}(a,b)_{\alpha}$'s are 
generated by $\mathcal{\widehat{B}}(a,b)_{n}\, (n\ge 0)$.

(b) Since ${\rm comm}$ is an algebra homomorphism, the assertion follows from (a).
\end{proof}

It should be remarked that $\mathcal{\widehat{B}}(a,b)_{n}$ and 
${\rm comm}(\mathcal{\widehat{B}}(a,b)_{n})$ in Corollary~\ref{minimal geberating sets for nsym and sym} can be written in a more concrete form as follows:
\begin{align*}
    \mathcal{\widehat{B}}(a,b)_{n} = \sum_{J \subseteq [n-1]} a^{(n-1) - |J|} b^{|J|} H_{\comp(J)}
    = \sum_{\beta \in \Comp_n} a^{n-\ell(\beta)} b^{\ell(\beta)-1} H_{\beta},
\end{align*}
and therefore
\begin{align*}
    {\rm comm}(\mathcal{\widehat{B}}(a,b)_{n}) = \sum_{\beta \in \Comp_n} a^{n-\ell(\beta)} b^{\ell(\beta)-1} h_{\lambda(\beta)}= \sum_{\lambda \in \Par_n} a^{n-\ell(\lambda)} b^{\ell(\lambda)-1} C_{\lambda} \, \, h_{\lambda},
\end{align*}
where $C_{\lambda} = {\ell(\lambda)!}/{\prod_{i} m_i(\lambda)!}$ for $\lambda = 1^{m_1(\lambda)} 2^{m_2(\lambda)} \cdots $.

The second consequence concerns a symmetry on the set $ \{ \mathcal{\widehat{B}}(a,b)_{\alpha} \mid a, b \in \mathbb{C}, \alpha \in \Comp  \}$.
Recall that $\NSym$ has the involutive anti-homomorphism 
\[\omega:\NSym \to \NSym, \quad H_{\alpha} \mapsto \Lambda_{\alpha^r} 
\,\,\,(\alpha \in \Comp)\]
(see~\cite[Chapter 4]{Gelfand95}). 
Proposition~\ref{prop: specializations of B(q,t)} (a) and (b) tell us that $\omega(\mathcal{\widehat{B}}(1,0)_{\alpha}) = \mathcal{\widehat{B}}(-1,1)_{\alpha^r}$.
In fact, this is a special case of the following proposition.

\begin{proposition}\label{prop: omega involution}
For any $\alpha \in \Comp$
and $a,b \in \mathbb{C}$, 
\[
\omega(\mathcal{\widehat{B}}(a,b)_{\alpha}) = \mathcal{\widehat{B}}(-a,a+b)_{\alpha^r}.
\]
\end{proposition}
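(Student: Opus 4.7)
The plan is to reduce the statement to the one-part case $\alpha = (n)$ via multiplicativity of the basis $\{\mathcal{\widehat{B}}(q,t)_\alpha\}$, and then verify that case by an explicit expansion in the $H$-basis followed by a binomial identity.

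\emph{Reduction to generators.} The first step will be to invoke Theorem~\ref{thm: str consts for hat B}(a), which says that $\{\mathcal{\widehat{B}}(q,t)_\alpha\}$ is multiplicative; so for $\alpha = (\alpha_1, \ldots, \alpha_l)$ one has
\[
\mathcal{\widehat{B}}(a,b)_\alpha = \mathcal{\widehat{B}}(a,b)_{\alpha_1}\mathcal{\widehat{B}}(a,b)_{\alpha_2}\cdots\mathcal{\widehat{B}}(a,b)_{\alpha_l}.
\]
Since $\omega$ is an algebra anti-homomorphism, applying it to this product reverses the factor order. Thus, once the identity $\omega(\mathcal{\widehat{B}}(a,b)_n) = \mathcal{\widehat{B}}(-a,a+b)_n$ is established for every $n \ge 0$, a second use of multiplicativity will yield
\[
\omega(\mathcal{\widehat{B}}(a,b)_\alpha) = \mathcal{\widehat{B}}(-a,a+b)_{\alpha_l}\cdots\mathcal{\widehat{B}}(-a,a+b)_{\alpha_1} = \mathcal{\widehat{B}}(-a,a+b)_{\alpha^r},
\]
as desired.

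\emph{Verification for $\alpha=(n)$.} Here I would unfold Definition~\ref{defn of B(q,t)} using the identity $(n)^c = (1^n)$ to obtain, for $n \ge 1$,
\[
\mathcal{\widehat{B}}(a,b)_n = \sum_{\beta \in \Comp_n} a^{n-\ell(\beta)}\, b^{\ell(\beta)-1}\, H_\beta,
\]
the case $n=0$ being trivial. Applying $\omega$, using $\omega(H_\beta) = \Lambda_{\beta^r}$ together with the fact that $\beta \mapsto \beta^r$ is a length-preserving involution on $\Comp_n$, then expanding $\Lambda_\beta = \sum_{\gamma \preceq \beta}(-1)^{n-\ell(\gamma)}H_\gamma$ and switching the order of summation, the coefficient of $H_\gamma$ in $\omega(\mathcal{\widehat{B}}(a,b)_n)$ will reduce to
\[
(-1)^{n-\ell(\gamma)}\sum_{\beta \succeq \gamma} a^{n-\ell(\beta)}\, b^{\ell(\beta)-1}.
\]
The final step is a reparametrization: $\{\beta : \beta \succeq \gamma\}$ corresponds bijectively to $\{T : T \subseteq \set(\gamma)\}$ via $\set(\beta) = T$, with $\ell(\beta) = |T|+1$. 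Hence the inner sum collapses by the binomial theorem to $a^{n-\ell(\gamma)}(a+b)^{\ell(\gamma)-1}$, and combining this with the sign yields the coefficient $(-a)^{n-\ell(\gamma)}(a+b)^{\ell(\gamma)-1}$, matching $\mathcal{\widehat{B}}(-a,a+b)_n$ term by term.

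\emph{Anticipated difficulty.} The key conceptual observation is that the multiplicativity established in Theorem~\ref{thm: str consts for hat B}(a) absorbs the tension between $\omega$ being an anti-homomorphism and the reversal $\alpha \mapsto \alpha^r$, leaving only a computation in degree one (the one-part case). The only step requiring any care will be setting up the bijection between refinements $\beta \succeq \gamma$ and subsets of $\set(\gamma)$ cleanly enough that the binomial theorem produces the compact form $a^{n-\ell(\gamma)}(a+b)^{\ell(\gamma)-1}$; no deeper combinatorics is needed.
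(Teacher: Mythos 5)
Your proposal is correct and follows essentially the same route as the paper: reduce to the one-part case using the multiplicativity of $\{\mathcal{\widehat{B}}(q,t)_{\alpha}\}$ from Theorem~\ref{thm: str consts for hat B}(a) together with $\omega$ being an anti-homomorphism, then verify the one-part identity by expanding in the $H$-basis, applying $\omega(H_\beta)=\Lambda_{\beta^r}$, and collapsing the inner sum with the binomial theorem. The only (cosmetic) differences are that you verify $\omega(\mathcal{\widehat{B}}(a,b)_n)=\mathcal{\widehat{B}}(-a,a+b)_n$ directly rather than the equivalent reversed identity, and your subset reparametrization avoids the division by $a$ that forces the paper to treat $a=0$ separately.
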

\begin{proof}
It is straightforward to check the equality when $a=0$, so we assume $a \neq 0$.
Since $\omega$ is an involutive anti-homomorphism and $\{\mathcal{\widehat{B}}(a,b)_{\alpha} \}$ is multiplicative by Theorem~\ref{thm: str consts for hat B} (a), 
we only have to show that $\omega(\mathcal{B}(-a,a+b)_{\comp([n-1])}) = \mathcal{B}(a,b)_{\comp([n-1])}$ for each $n \ge 1$.
Note that we have
\begin{align*}
    \omega(\mathcal{B}(-a,a+b)_{\comp([n-1])}) &= \sum_{J \subseteq [n-1]} (-a)^{(n-1)-|J|} (a+b)^{|J|} \Lambda_{\comp(J)^r}\\
    &= \sum_{J \subseteq [n-1]} (-a)^{(n-1)-|J|} (a+b)^{|J|} \Lambda_{\comp(J)},
\end{align*}
since $|\set(\comp(J)^r)| = |J|$.
By~\cite[Proposition 4.3]{Gelfand95}, we have
\[
\Lambda_{\comp(J)} = \sum_{J \subseteq K} (-1)^{(n-1) - |K|} H_{\comp(K)}
\]
for $J \subseteq [n-1]$. Therefore,
\begin{align*}
    \omega(\mathcal{B}(-a,a+b)_{\comp([n-1])}) &= \sum_{J \subseteq [n-1]} (-a)^{(n-1)-|J|} (a+b)^{|J|} \left( \sum_{K: J \subseteq K} (-1)^{(n-1) - |K|} H_{\comp(K)} \right)\\
    &= \sum_{K \subseteq [n-1]} (-1)^{(n-1) - |K|} \left( \sum_{J: J \subseteq K} (-a)^{(n-1)-|J|} (a+b)^{|J|} \right) H_{\comp(K)}\\
    &=\sum_{K \subseteq [n-1]} (-1)^{(n-1) - |K|} (-a)^{n-1} \left( \dfrac{a+b}{-a} +1  \right)^{|K|}  H_{\comp(K)}\\
    &=\sum_{K \subseteq [n-1]} a^{(n-1) - |K|} \, b^{|K|} \, H_{\comp(K)}\\
    &=\mathcal{B}(a,b)_{\comp([n-1])}.
\end{align*}
\end{proof}

Let $\Comp_{n,l}$ (resp. $\Par_{n,l}$) be the set of compositions (resp. partitions) of $n$ with length $l$.
Then $\SG_l$ acts on $\Comp_{n,l}$ on the right by place permutations, i.e.,
\[(\alpha_1, \alpha_2, \ldots, \alpha_l)\cdot \sigma= (\alpha_{\sigma(1)}, \alpha_{\sigma(2)}, \ldots, \alpha_{\sigma(l)}).\]
Clearly $\Par_{n,l}$ is a complete set of orbit-representatives.
With this preparation, we state the third consequence.

\begin{proposition}\label{prop: multiplicative basis for Sym}
Let $a \in \mathbb{C} \setminus \{0\}$ and  $b \in \mathbb{C}$.
Then $\{ {\rm comm}(\mathcal{\widehat{B}}(a,b)_{\lambda}) \mid \lambda \in \Par \}$ is a multiplicative basis for $\Sym$. 
\end{proposition}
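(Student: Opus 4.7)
The plan is to reduce to two separate claims: the multiplicativity of $\{{\rm comm}(\mathcal{\widehat{B}}(a,b)_{\lambda})\}$, which will follow from the multiplicativity of $\{\mathcal{\widehat{B}}(a,b)_{\alpha}\}$ in $\NSym_{\mathbb C(q,t)}$ combined with the commutativity of $\Sym$, and the basis property, which I will establish by exhibiting a triangular transition matrix to the complete homogeneous basis $\{h_{\lambda}\}$ of $\Sym$.

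For multiplicativity, Theorem~\ref{thm: str consts for hat B}(a) gives $\mathcal{\widehat{B}}(a,b)_{\alpha}\,\mathcal{\widehat{B}}(a,b)_{\beta} = \mathcal{\widehat{B}}(a,b)_{\alpha \cdot \beta}$ in $\NSym$. Applying the algebra homomorphism ${\rm comm}$, using the factorization $\mathcal{\widehat{B}}(a,b)_{\alpha} = \mathcal{\widehat{B}}(a,b)_{\alpha_1}\cdots \mathcal{\widehat{B}}(a,b)_{\alpha_{\ell(\alpha)}}$, and invoking commutativity of $\Sym$ shows that ${\rm comm}(\mathcal{\widehat{B}}(a,b)_{\alpha})$ depends only on $\lambda(\alpha)$. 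Writing $\lambda \cdot \mu$ for the partition obtained by merging the multisets of parts of $\lambda$ and $\mu$, one therefore has ${\rm comm}(\mathcal{\widehat{B}}(a,b)_{\lambda})\,{\rm comm}(\mathcal{\widehat{B}}(a,b)_{\mu}) = {\rm comm}(\mathcal{\widehat{B}}(a,b)_{\lambda \cdot \mu})$, as required.

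For the basis property, I would use the explicit formula
\[
{\rm comm}(\mathcal{\widehat{B}}(a,b)_{n}) = \sum_{\lambda \in \Par_n} a^{n-\ell(\lambda)}\, b^{\ell(\lambda)-1}\, C_{\lambda}\, h_{\lambda}
\]
derived just before the statement, together with the identity $h_{\mu} h_{\nu} = h_{\mu \cup \nu}$ in $\Sym$. Expanding
\[
{\rm comm}(\mathcal{\widehat{B}}(a,b)_{\lambda}) = \prod_{i=1}^{\ell(\lambda)} {\rm comm}(\mathcal{\widehat{B}}(a,b)_{\lambda_i}) = \sum_{(\mu^{(1)},\ldots,\mu^{(\ell(\lambda))})} \prod_{i=1}^{\ell(\lambda)} \bigl(a^{\lambda_i - \ell(\mu^{(i)})}\, b^{\ell(\mu^{(i)})-1}\, C_{\mu^{(i)}}\bigr)\, h_{\mu^{(1)} \cup \cdots \cup \mu^{(\ell(\lambda))}},
\]
every $h$-basis element that appears is indexed by a partition refining $\lambda$, and the unique choice of tuple yielding $h_{\lambda}$ itself is $\mu^{(i)} = (\lambda_i)$ for each $i$, which contributes $a^{n-\ell(\lambda)}$. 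Choosing any linear extension of the refinement partial order on $\Par_n$, this makes the transition matrix from $\{{\rm comm}(\mathcal{\widehat{B}}(a,b)_{\lambda})\}_{\lambda \in \Par_n}$ to $\{h_{\lambda}\}_{\lambda \in \Par_n}$ triangular with diagonal entries $a^{n-\ell(\lambda)}$; since $a \neq 0$ these are nonzero, so the matrix is invertible and $\{{\rm comm}(\mathcal{\widehat{B}}(a,b)_{\lambda}) \mid \lambda \in \Par_n\}$ is a basis of $\Sym_n$ for each $n$.

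The only point requiring care is the triangularity claim, but it reduces to the elementary observation that if $\mu^{(1)} \cup \cdots \cup \mu^{(\ell(\lambda))} = \lambda$ as partitions, then comparing lengths forces $\sum_i \ell(\mu^{(i)}) = \ell(\lambda)$, and since $|\mu^{(i)}| = \lambda_i \geq \ell(\mu^{(i)}) \geq 1$, this forces $\ell(\mu^{(i)}) = 1$ and hence $\mu^{(i)} = (\lambda_i)$ for every $i$.
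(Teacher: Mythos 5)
Your proof is correct, but it reaches the basis property by a genuinely different route than the paper. Both arguments dispose of multiplicativity the same way, by pushing Theorem~\ref{thm: str consts for hat B}\,(a) through the algebra homomorphism ${\rm comm}$ and using commutativity of $\Sym$; your observation that ${\rm comm}(\mathcal{\widehat{B}}(a,b)_{\alpha})$ depends only on $\lambda(\alpha)$ via the factorization $\mathcal{\widehat{B}}(a,b)_{\alpha}=\mathcal{\widehat{B}}(a,b)_{\alpha_1}\cdots\mathcal{\widehat{B}}(a,b)_{\alpha_{\ell(\alpha)}}$ is in fact slicker than the paper's device, which establishes the same invariance by an explicit bijection $\Phi\colon\Gamma_{\alpha}\to\Gamma_{\beta}$ between the index sets of the $H$-expansions preserving $\lambda(\gamma)$ and the exponent statistic. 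For the basis property the paper then argues indirectly: the set $\{{\rm comm}(\mathcal{\widehat{B}}(a,b)_{\alpha})\mid\alpha\in\Comp_n\}$ spans $\Sym_n$ because ${\rm comm}$ is surjective and $\{\mathcal{\widehat{B}}(a,b)_{\alpha}\}$ is a basis of $\NSym_n$, the invariance under place permutation cuts the index set down to $\Par_n$, and a dimension count finishes. You instead exhibit the transition matrix to $\{h_{\lambda}\}_{\lambda\in\Par_n}$ directly, using the explicit expansion of ${\rm comm}(\mathcal{\widehat{B}}(a,b)_{n})$ and $h_{\mu}h_{\nu}=h_{\mu\cup\nu}$ to show that only partitions refining $\lambda$ occur and that the coefficient of $h_{\lambda}$ is exactly $a^{n-\ell(\lambda)}\neq 0$; the length argument pinning down the diagonal term is correct, and refinement being a partial order makes the linear-extension triangularity legitimate. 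Your route buys more: it is self-contained (no appeal to surjectivity plus counting) and yields the precise leading coefficients of the change of basis, at the modest cost of the explicit product expansion; like the paper, you implicitly use that the identity of Theorem~\ref{thm: str consts for hat B}\,(a) specializes from $(q,t)$ to $(a,b)$, which is harmless since the structure constants are polynomial in $q,t$.
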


\begin{proof}
Since $\{ \mathcal{\widehat{B}}(a,b)_{\alpha}) \mid \alpha \in \Comp_n\}$ is a multiplicative basis for $\NSym$ by Theorem~\ref{thm: str consts for hat B} (a) and ${\rm comm} : \NSym \to \Sym$ is a surjective algebra homomorphism, we have only to show that 
$\{ {\rm comm}(\mathcal{\widehat{B}}(a,b)_{\lambda}) \mid \lambda \in \Par \}$ is a basis for $\Sym$.
Note that $\{ {\rm comm}(\mathcal{\widehat{B}}(a,b)_{\alpha}) \mid \alpha \in \Comp_n\}$ is a spanning set of $\Sym_n$ for each $n\ge 0$. 
Hence, for our purpose, it suffices to show that 
${\rm comm}(\mathcal{\widehat{B}}(a,b)_{\alpha})
={\rm comm}(\mathcal{\widehat{B}}(a,b)_{\beta})$ 
whenever $\alpha, \beta \in \Comp_{n,l}$ are in the same orbit. 
To see this, we first recall that 
\[
\mathcal{\widehat{B}}(q,t)_{\alpha} = \sum_{\gamma \preceq \alpha} q^{|\set(\alpha)^c \setminus \set(\gamma)|} \  t^{|\set(\alpha)^c \cap \set(\gamma)|} \  H_{\gamma}.
\]
Let 
$\Gamma_{\alpha} := \{\gamma \mid \gamma \preceq \alpha \}$ and
$\Gamma_{\beta} := \{\gamma \mid \gamma \preceq \beta \}$.
Since $ {\rm comm}(H_{\alpha}) =  {\rm comm}(H_{\beta})=h_{\lambda(\alpha)}$, 
we have only to see that there is a bijection $\Phi: \Gamma_{\alpha} \to \Gamma_{\beta}$ such that
$\lambda(\gamma) = \lambda(\Phi(\gamma))$ and $|\set(\alpha)^c \cap \set(\gamma)| = |\set(\beta)^c \cap \set(\Phi(\gamma))|$.
For $\gamma \preceq \alpha$, we write it as 
$$
(\gamma^{(1)};\gamma^{(2)}; \ldots ; \gamma^{(l)}) \text{ where } \gamma^{(i)} \in \Comp_{\alpha_i}.  
$$
Then we define
$\Phi : \Gamma_{\alpha} \ra \Gamma_{\beta} $
by
\[
\Phi(\gamma) = (\gamma^{(\sigma(1))};\gamma^{(\sigma(2))}; \ldots ; \gamma^{(\sigma(l))}).
\]
It can be easily seen that $\Phi$ satisfies the desired properties, so we are done.
\end{proof}

The final consequence to be introduced 
concerns the structure constants of $\QSym$ for the 
basis $\{M_\alpha\}$ of the monomial quasisymmetric functions.
It is well known that 
\[M_{\alpha} M_{\beta} = \sum_{\gamma} e^{\gamma}_{\alpha, \beta} M_{\gamma}, \]
where 
$e^{\gamma}_{\alpha, \beta}$ counts the {\it overlapping shuffles} of $\alpha$ and $\beta$ with weight $\gamma$.
For more information, we refer the readers to~\cite[Proposition 5.1.3]{GR20}. 
Putting Proposition~\ref{prop: specializations of B(q,t)} and Theorem~\ref{thm: str consts for B} together, one can derive the following combinatorial descriptions of the overlapping shuffles.

\begin{corollary}\label{cor: overlapping shuffle}
Let $I \subseteq [m-1], J \subseteq [n-1]$, and $K \subseteq [m+n-1]$.
The following three sets have the same cardinality.
\begin{enumerate}[label = {\rm (\alph*)}]
    \item The set of overlapping shuffles of $\comp(I^c)$ and $\comp(J^c)$ with weight $\comp(K^c)$
    
    \item $$\left\{ A \in \binom{[m+n]}{n}\ \mid \begin{array}{l}
    (I \#_A J) \cap c(A) = \emptyset, \\
    I \#_A J \subseteq K \subseteq (I \#_A J ) \cup c(A),\\
    |K \setminus c_2(A)| = |I| + |J|
   \end{array}\right\}$$
    
    \item $$ \left\{ A \in \binom{[m+n]}{n}\ \mid \begin{array}{l}
    (I \#_A J) \cap c(A) = \emptyset, \\
    I \#_A J \subseteq K \subseteq (I \#_A J ) \cup c(A),\\
    K \cap c_2(A) = \emptyset
  \end{array}\right\}$$
\end{enumerate}
\end{corollary}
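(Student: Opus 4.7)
The plan is to derive both equivalences $(a) = (b)$ and $(a) = (c)$ by specializing the coproduct formula of Theorem~\ref{thm: str consts for B}(a) at the two points $(q,t) = (1,0)$ and $(q,t) = (-1,1)$ identified by Proposition~\ref{prop: specializations of B(q,t)}, and then invoking the duality between $\NSym$ and $\QSym$. As a preliminary step, I will rewrite $C^K_{I,J}(q,t)$ in a form where the prefactor $t^{-|I|-|J|}$ has been absorbed. Under the summation conditions $(I \#_A J) \cap c(A) = \emptyset$ and $I \#_A J \subseteq K \subseteq (I\#_A J) \cup c(A)$, the set $K$ decomposes disjointly as $K = (I\#_A J) \sqcup (K \cap c_1(A)) \sqcup (K \cap c_2(A))$, so $|K \setminus c_2(A)| = |I|+|J| + |K \cap c_1(A)|$, and the formula from Theorem~\ref{thm: str consts for B}(a) becomes
\[
C^K_{I,J}(q,t) = \sum_{A} (q+t)^{|K \cap c_2(A)|}\, t^{|K \cap c_1(A)|},
\]
the sum being over those $A \in \binom{[m+n]}{n}$ satisfying the two containment conditions.

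For the equivalence $(a) = (b)$, I specialize to $(q,t) = (1,0)$. By Proposition~\ref{prop: specializations of B(q,t)}(a), $\mathcal{B}(1,0)_\alpha = H_{\alpha^c}$, so Theorem~\ref{thm: str consts for B}(a) expands $\triangle H_{\comp(K^c)}$ in the $H$-basis with coefficients $C^K_{I,J}(1,0)$. Since $\{H_\alpha\}$ is the dual basis of $\{M_\alpha\}$, the coefficient of $H_{\comp(I^c)} \otimes H_{\comp(J^c)}$ in $\triangle H_{\comp(K^c)}$ is exactly the overlapping shuffle number $e^{\comp(K^c)}_{\comp(I^c),\comp(J^c)}$. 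From the rewritten formula, the only nonvanishing summands at $(q,t)=(1,0)$ are those with $K \cap c_1(A) = \emptyset$, which by the decomposition above is equivalent to $|K \setminus c_2(A)| = |I|+|J|$; each such $A$ contributes $1$. Hence $C^K_{I,J}(1,0) = |(\mathrm{b})|$, and the equivalence follows.

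For the equivalence $(a) = (c)$, I specialize to $(q,t) = (-1,1)$. By Proposition~\ref{prop: specializations of B(q,t)}(b), $\mathcal{B}(-1,1)_\alpha = \Lambda_{\alpha^c}$. At this point $(q+t) = 0$, so only summands with $K \cap c_2(A) = \emptyset$ survive, each contributing $1$; hence $C^K_{I,J}(-1,1) = |(\mathrm{c})|$. It remains to identify the coefficient of $\Lambda_{\comp(I^c)} \otimes \Lambda_{\comp(J^c)}$ in $\triangle \Lambda_{\comp(K^c)}$ with $e^{\comp(K^c)}_{\comp(I^c),\comp(J^c)}$ as well. The key observation is that $\{\Lambda_\alpha\}$ is multiplicative, that is, $\Lambda_\alpha = \Lambda_{\alpha_1} \cdots \Lambda_{\alpha_l}$ (which follows by regrouping refinements of $\alpha$ in the defining formula $\Lambda_\alpha = \sum_{\beta \preceq \alpha} (-1)^{|\alpha|-\ell(\beta)} H_\beta$), and that the coproduct on generators has the same form $\triangle \Lambda_n = \sum_{i+j=n} \Lambda_i \otimes \Lambda_j$ as for $H_n$, as recorded in Theorem~\ref{thm: str consts for hat B}(c). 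Since $\triangle$ is an algebra homomorphism, these two facts force $\triangle H_\alpha$ in the $H$-basis and $\triangle \Lambda_\alpha$ in the $\Lambda$-basis to have identical expansions; dualizing yields the desired identification.

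The main obstacle is the last step: confirming that the dual basis of $\{\Lambda_\alpha\}$ in $\QSym$ encodes overlapping shuffles for multiplication. The route through multiplicativity of $\{\Lambda_\alpha\}$ together with the identical form of the generator coproducts circumvents having to compute the dual basis explicitly, reducing the matter to an application of the same Hopf-duality argument already used for $\{H_\alpha\}$; the remaining check is purely formal verification that the two facts above hold in $\NSym$.
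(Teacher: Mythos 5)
Your proof is correct and follows essentially the same route as the paper: specialize the coefficient formula of Theorem~\ref{thm: str consts for B}(a) at $(q,t)=(1,0)$ and $(q,t)=(-1,1)$ (your rewriting of $C^K_{I,J}(q,t)$ with the prefactor absorbed is valid since $|I\#_A J|=|I|+|J|$ and $c_1(A)\cap c_2(A)=\emptyset$), identify the specialized bases with $\{H_{\alpha^c}\}$ and $\{\Lambda_{\alpha^c}\}$ via Proposition~\ref{prop: specializations of B(q,t)}, and extract the overlapping-shuffle number from the duality of $\{H_\alpha\}$ with $\{M_\alpha\}$. The only minor difference is the last step: where the paper equates the $H$- and $\Lambda$-structure constants by citing \cite{Gelfand95}, you justify it self-containedly from the multiplicativity of $\{\Lambda_\alpha\}$ together with $\triangle \Lambda_k=\sum_{i+j=k}\Lambda_i\otimes\Lambda_j$ from Theorem~\ref{thm: str consts for hat B}(c), which is an acceptable (and non-circular) substitute.
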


\begin{proof}
Proposition~\ref{prop: specializations of B(q,t)} (a) says that 
$\mathcal{B}(1,0)_{\comp(K)} = H_{\comp(K^c)}$. 
Therefore, from the fact that $\{H_{\alpha}\}$ is the dual basis of $\{M_{\alpha}\}$
it follows that the cardinality of $(a)$ is given by 
$C^K_{I,J}(1,0)$ (=the cardinality of (b)).

On the other hand, Proposition~\ref{prop: specializations of B(q,t)} (a) says that $\mathcal{B}(-1,1)_{\comp(K)} = \Lambda_{\comp(K^c)}$. 
It was shown in~\cite[Proposition 3.8]{Gelfand95} that $\Lambda_{\alpha}\Lambda_{\beta}$ and $H_{\alpha}H_{\beta}$ have the same structure constants, thus 
$C^K_{I,J}(1,0)$ equals $C^K_{I,J}(-1,1)$(=the cardinality of (c)).
\end{proof}

\bibliographystyle{plain}
\bibliography{references}
\end{document}